\theoremstyle{plain}
\newtheorem{theorem}{Theorem}[section]
\newtheorem{proposition}[theorem]{Proposition}
\newtheorem{lemma}[theorem]{Lemma}
\newtheorem{corollary}[theorem]{Corollary}
\newtheorem{sub}{}[theorem] % This creates the counter "sub"
\newtheorem{subcorollary}	[sub]{Corollary}
\newtheorem{sublemma}		[sub]{Lemma}
\theoremstyle{definition}
\newtheorem{definition}[theorem]{Definition}
\newtheorem{parag}[theorem]{}
\newtheorem{notations}[theorem]{Notations}
\newtheorem{setup}[theorem]{Setup}
\newtheorem{Remark}[theorem]{Remark}
\newtheorem{subdefinition}[sub]{Definition}
\newtheorem{subparag}[sub]{}
\theoremstyle{remark}
\newenvironment{enumerata}%
{\begin{enumerate}

}{\end{enumerate}}
\newcommand{\ord}{	\operatorname{{\rm ord}}}
\newcommand{\trdeg}{	\operatorname{{\rm trdeg}}}
\newcommand{\Frac}{	\operatorname{{\rm Frac}}}
\newcommand{\Char}{	\operatorname{{\rm char}}}
\newcommand{\gr}{	\operatorname{{\rm gr}}}
\newcommand{\Gr}{	\operatorname{{\rm Gr}}}
\newcommand{\co}{	\operatorname{{\rm co}}}
\newcommand{\DEG}{	\operatorname{\mbox{\sc deg}}}
\newcommand{\Der}{	\operatorname{{\rm Der}}}
\newcommand{\setspec}[2]{\big\{\,#1\, \mid \,#2\, \big\}}
\newcommand{\powerset}{	\operatorname{\raisebox{0.7\depth}{\large $\wp$}}}
\newcommand{\pfin}{\powerset_{\text{\rm fin}}}
\newcommand{\pfino}{\powerset_{\text{\rm fin}}^*}
\newlength{\mylength}
\newcommand{\Integ}{\ensuremath{\mathbb{Z}}}
\newcommand{\Nat}{\ensuremath{\mathbb{N}}}
\newcommand{\Rat}{\ensuremath{\mathbb{Q}}}
\newcommand{\Comp}{\ensuremath{\mathbb{C}}}
\newcommand{\bk}{{\ensuremath{\rm \bf k}}}
\newcommand{\kk}[1]{\bk^{[#1]}}
\newcommand{\ggoth}{{\ensuremath{\mathfrak{g}}}}
\newcommand{\Peul}{\EuScript{P}}
\newcommand{\Ueul}{\EuScript{U}}
\newcommand{\isom}{\cong}
\renewcommand{\epsilon}{\varepsilon}
\renewcommand{\phi}{\varphi}
\renewcommand{\emptyset}{\varnothing}
\newcommand{\rien}[1]{}
\begin{document}
\renewcommand{\baselinestretch}{1.07}

%%%%%%	TOPMATTER:   %%%%%%%%%%%%%%%%%%%%%%%%%

\title{Tame and wild degree functions}

\author{Daniel Daigle}

\address{Department of Mathematics and Statistics\\
	University of Ottawa\\
	Ottawa, Canada\ \ K1N 6N5}

\email{ddaigle@uottawa.ca}

\thanks{Research supported by grant RGPIN/104976-2005 from NSERC Canada.}

\keywords{Degree functions, polynomial rings, derivations, valuations.}

{\renewcommand{\thefootnote}{}
\footnotetext{2010 \textit{Mathematics Subject Classification.}
Primary: 14R20.
Secondary: 13N15, 12J20, 13A18.}}

\begin{abstract}
We give examples of degree functions $\deg : R \to M \cup \{-\infty\}$,
where $R$ is $\Comp[X,Y]$ or $\Comp[X,Y,Z]$ and $M$ is $\Integ$ or $\Nat$,
whose behaviour with respect to $\Comp$-derivations
$D : R \to R$ is pathological
in the sense that $\setspec{ \deg(Dx) - \deg(x) }{ x \in R\setminus \{0\} }$
is not bounded above.
We also give several general results stating that such pathologies
do not occur when the degree functions satisfy certain hypotheses.
\end{abstract}

\maketitle
  
\vfuzz=2pt

\section{Introduction}

Let $B$ be a ring and $(G,+,\le)$ a totally ordered
abelian group.  A map 
$$
\deg : B \to G \cup \{ -\infty\}
$$
is called a \textit{degree function\/} if it satisfies, for all $x,y\in B$,
\begin{enumerate}

\item $\deg(x) = -\infty$ iff $x=0$

\item $\deg(xy) = \deg x + \deg y$

\item $\deg(x+y) \le \max( \deg x, \deg y )$.

\end{enumerate}

It is easy to see that if $B$ admits a degree function
then $B$ is either the zero ring or an integral domain.
Also, if $\deg : B \to G \cup \{ -\infty\}$
is a degree function and $x,y \in B$ are such that
$\deg x \neq \deg y$, then $\deg(x+y) = \max( \deg x, \deg y )$.

\medskip
Let $B$ be an integral domain and $\deg : B \to G \cup \{ -\infty\}$
a degree function, where $G$ is a totally ordered abelian group.
Given a derivation $D : B \to B$,
$$
U = \setspec{ \deg( D x ) - \deg(x) }{ x \in B \setminus \{0\} } 
$$
is a nonempty subset of the totally ordered set $G \cup \{ -\infty \}$.
If $U$ has a greatest element, we define $\deg(D)$ to be that element;
if $U$ does not have a greatest element, we say that $\deg(D)$ is not defined.
Note that if $D$ is the zero derivation then $\deg(D)$ is defined and is equal
to $-\infty$; in fact the condition $D=0$ is equivalent to $\deg(D)=-\infty$.
Also note that, in the special case $G=\Integ$,
$\deg(D)$ is defined if and only if the set $U$ is bounded above.

\medskip
Consider the associated graded ring $\Gr(B)$, which is 
a $G$-graded integral domain determined by the pair $(B,\deg)$
(see \ref{dfjapisdjfa;kj} for details).
It is well-known that each derivation $D : B \to B$
{\it such that $\deg(D)$ is defined\/}
gives rise to a homogeneous derivation $\gr(D) : \Gr(B) \to \Gr(B)$.
The technique of replacing $D$ by $\gr(D)$,
called ``homogeneization of derivations'',
is used quite systematically in the study of $G_a$-actions on affine
algebraic varieties.
We stress that homogeneization requires prior verification that
$\deg(D)$ is defined with respect to the given degree function.
To clarify the discussion, we introduce the following notion:

\begin{definition}   \label{DefTame}
Let $A \subseteq B$ be integral domains of characteristic zero,
and let $G$ be a totally ordered abelian group.
A degree function $\deg : B \to G \cup \{-\infty\}$
is said to be {\it tame over $A$}, or {\it $A$-tame}, if it satisfies:
$$
\text{$\deg(D)$ is defined for all $A$-derivations $D : B \to B$}.
$$
If $\deg$ is not tame over $A$, we say that it is {\it wild over $A$},
or {\it $A$-wild}.
% If no confusion is likely to arise, we say that $\deg$
% is {\it tame\/} or {\it wild}, without mentioning $A$.
\end{definition}

\medskip
The present paper has two objectives:
\begin{enumerate}

\item[I.] To give examples of $\bk$-wild degree functions
$\deg : \bk[X,Y] \to \Integ \cup \{ -\infty \}$ and 
$\deg : \bk[X,Y,Z] \to \Integ \cup \{ -\infty \}$,
where $\bk$ is a field of characteristic zero;

\item[II.] to give results which state that degree functions satisfying certain
hypotheses are tame.

\end{enumerate}

\bigskip
There is a good measure of confusion in relation with degree functions.
Consider the following statement:
\begin{equation} \tag{$\times$}
\begin{minipage}{.8\textwidth}
\it 
If $B$ is an integral domain and a finitely
generated $\Comp$-algebra, 
then all degree functions on $B$ are tame over $\Comp$.
\end{minipage}
\end{equation}
Assertion~$(\times)$ is {\bf false}, as it is contradicted by either one of
\ref{dfkljasdklfja}, \ref{odfasdjflkal;ksdf} (see below).
However, $(\times)$ has been used by several authors
to justify the homogeneization of derivations.
Examples:
\cite[Pf of Lemma 1]{LML:FactsCancel},
\cite[Pf of Lemma 5]{LML:Again},
\cite[Pf of Thm 3.1]{Poloni:classifs};
% no justification :  \cite{LML:XnYZ},
in \cite{Derk:More}, a variant\footnote{Instead
of assuming that $B$ is finitely generated, the variant assumes
that $\Gr(B)$ is finitely generated. 
This variant is false: in \ref{dfkljasdklfja}, both $B$ and $\Gr(B)$
are finitely generated but $\deg$ is wild.}
of $(\times)$ is stated on page 3 and implicitly used in the proof of Prop.~2;
a (necessarily incorrect) proof of $(\times)$ is given
in \cite[6.2]{CracMaub:MLTech},
and $(\times)$ is then used to prove
the following false statement \cite[Cor.\ 6.3]{CracMaub:MLTech}:
{\it for  a $\Comp$-algebra $B$, if there exists a degree function
$\deg : B \to \Integ \cup \{-\infty\}$ such that $\Gr(B)$ is rigid,
then $B$ is rigid\/}\footnote{One says that $B$ is \textit{rigid\/} if
the only locally nilpotent derivation $D : B \to B$ is the zero derivation.}
(\ref{dfkljasdklfja} is a counterexample, as $B$ is not rigid
but $\Gr(B)=\bk[t,t^{-1}]$ is rigid).
We provide the correction:
{\it if there exists a \textit{\textbf{$\Comp$-tame}} degree function
$\deg : B \to G \cup \{-\infty\}$ such that $\Gr(B)$ is rigid,
then $B$ is rigid.}

Also, one can find many examples in the literature where
authors simply omit to raise the question whether
$\deg(D)$ is defined, as if it were a priori clear that $\deg(D)$ is
always defined.
We hope that our examples will clear-up some of this confusion.

\bigskip
Sections \ref{Sectiondfpq9wekzjkjjdf'al} and \ref{fqwekjaklsdmfklasmdlf}
prove the following facts
(the reader should compare these results
to the statement of \ref{dfj;askdjflaksj}, below).

\begin{proposition}  \label{dfkljasdklfja}
Let $\bk$ be a field of characteristic zero and $B = \bk[X,Y] = \kk2$.
Then there exists a degree function $\deg : B \to \Integ\cup\{-\infty\}$
satisfying:
\begin{enumerata}

\item $\deg(\lambda)=0$ for all $\lambda \in \bk^*$;

\item $\Gr(B) \isom \bk[t,t^{-1}]$;

\item the only $\bk$-derivation $D : B \to B$ such that $\deg(D)$ is defined
is the zero derivation.

\end{enumerata}
\end{proposition}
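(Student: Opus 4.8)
The plan is to realise $\deg$ as the pull‑back of the standard degree on a field of formal Laurent series. Take $\Omega=\bk((t^{-1}))$, the field of series $\sum_{i\le N}c_it^{i}$ ($N\in\Integ$, $c_i\in\bk$), with its degree function $\deg_t$ sending a nonzero series to its largest exponent; one has $\Gr(\Omega)\isom\bk[t,t^{-1}]$, with $t$ representing a homogeneous element of degree $1$. I would choose $g\in t^{-1}\bk[[t^{-1}]]$ transcendental over $\bk(t)$ (the precise choice is postponed to the last step) and let $\phi:B=\bk[X,Y]\to\Omega$ be the $\bk$-algebra homomorphism with $\phi(X)=t$, $\phi(Y)=g$. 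Transcendence of $g$ makes $\phi$ injective, so $\deg:=\deg_t\circ\phi$ is a degree function $B\to\Integ\cup\{-\infty\}$, and it is trivial on $\bk^*$, which gives (a). Writing $\deg_t(g)=-m$ with $m\ge 1$, we get $\deg(X^aY^b)=a-mb$, so $\deg$ attains every value of $\Integ$; since $\phi$ preserves degrees exactly it induces an injective graded homomorphism $\Gr(B)\hookrightarrow\Gr(\Omega)$, which is then onto in every degree, so $\Gr(B)\isom\bk[t,t^{-1}]$, which is (b).

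For (c) I claim it suffices to arrange that $g':=dg/dt$ does not lie in $\bk(t,g)$. Let $D:B\to B$ be a nonzero $\bk$-derivation, put $a=DX$, $b=DY\in B$ and $\alpha=\phi(a)$, $\beta=\phi(b)\in\Omega$ (not both $0$). For $n\ge1$ let $q_n\in\bk[X]$ be the polynomial part of the series $t^ng$, so that $R_n:=X^nY-q_n(X)$ satisfies $\phi(R_n)=t^ng-q_n(t)\in t^{-1}\bk[[t^{-1}]]$; since $g\notin\bk(t)$, we have $\phi(R_n)\neq0$ and $\deg(R_n)\le-1$. Using $\frac{d}{dt}\phi(P)=\phi(P_X)+g'\,\phi(P_Y)$ for $P\in B$, a direct computation gives
$$
\phi(DR_n)\;=\;\alpha\cdot\frac{d}{dt}\phi(R_n)\;+\;t^n\bigl(\beta-\alpha g'\bigr).
$$
Here the first summand has $\deg_t\le\deg_t(\alpha)-2$, a bound independent of $n$ (since $\deg_t\bigl(\frac{d}{dt}\phi(R_n)\bigr)\le-2$), while $\beta-\alpha g'\neq0$: if $\alpha\neq0$ this would give $g'=\beta/\alpha\in\bk(t,g)$, and if $\alpha=0$ then $\beta=\phi(DY)\neq0$. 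Hence $\deg(DR_n)=n+\deg_t(\beta-\alpha g')$ for all large $n$, so $\deg(DR_n)-\deg(R_n)\to+\infty$; thus the set $U$ attached to $D$ is unbounded above and $\deg(D)$ is undefined. (Conversely, if $g'\in\bk(t,g)$, clearing denominators produces a nonzero $D$ with $\deg(DP)\le\deg(P)+c$ for a fixed $c$, so the condition on $g$ is exactly the right one.)

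It then remains to produce $g\in t^{-1}\bk[[t^{-1}]]$ with $g'\notin\bk(t,g)$; any such $g$ is automatically transcendental over $\bk(t)$, since for $g$ algebraic one has $g'\in\bk(t,g)$. It is enough to find $g\in t^{-1}\Rat[[t^{-1}]]$ for which $t,g,g'$ are algebraically independent over $\Rat$: a $\bk$-linear relation among finitely many elements of the $\Rat$-vector space $\Rat((t^{-1}))$ already forces a $\Rat$-linear one (linear dependence of a finite family is unaffected by field extension), so $t,g,g'$ are then algebraically independent over every characteristic-zero field $\bk$, whence $g'\notin\bk(t,g)$. I would get such a $g$ by Baire category in the complete metric space $t^{-1}\Rat[[t^{-1}]]$ (with the $t^{-1}$-adic metric): for nonzero $F\in\Rat[T_0,T_1,T_2]$ the set $S_F=\{h\in t^{-1}\Rat[[t^{-1}]]:F(t,h,h')=0\}$ is closed, and it has empty interior. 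Indeed, picking $F$ of least total degree admitting a ball $g_0+t^{-N-1}\Rat[[t^{-1}]]\subseteq S_F$, for $g_1$ in this ball and $M\ge N+1$, $\epsilon\in\Rat$ the series $F(t,\,g_1+\epsilon t^{-M},\,g_1'-M\epsilon t^{-M-1})$ vanishes; being a polynomial in $\epsilon$ with infinitely many roots, it vanishes identically in $\epsilon$, and comparing its coefficient of $\epsilon$ for $M=N+1$ and $M=N+2$ forces $F_{T_1}$ and $F_{T_2}$ to vanish at $(t,g_1,g_1')$. As $g_1$ ranges over the ball this contradicts the minimality of $\deg F$, unless $F$ involves neither $T_1$ nor $T_2$, in which case $F(t)=0$ in $\Omega$, absurd. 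As there are only countably many $F$, Baire's theorem yields $g\in t^{-1}\Rat[[t^{-1}]]$ lying in no $S_F$.

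The conceptual heart — and the main obstacle — is the second step: realising that tameness of $\deg$ fails precisely when $g'\notin\bk(t,g)$, and exhibiting the sequence $(R_n)$ whose images grow in degree exactly because $\beta-\alpha g'\neq0$ for every nonzero $D$. Once this mechanism is in place, the Laurent-series identity for $\phi(DR_n)$ and the nowhere-density of the $S_F$ are elementary, and the production of a suitable $g$ is soft.
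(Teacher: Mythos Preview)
Your argument is correct and follows essentially the same route as the paper's proof: both embed $B$ into a Laurent-series field via $X\mapsto t^{\pm1}$ and $Y\mapsto$ a series whose derivative is transcendental over $\bk(t,Y)$, then use the truncations $R_n=X^nY-q_n(X)$ (the paper's $g_n$) to witness unbounded $\delta_D$. Your identity $\phi(DR_n)=\alpha\,\tfrac{d}{dt}\phi(R_n)+t^n(\beta-\alpha g')$ packages the paper's explicit convergence computation more cleanly, and your Baire-category construction of $g$ fills in the existence step that the paper simply asserts (``one can show that there exists $f(t)$\dots''); otherwise the two proofs coincide.
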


In the above statement and throughout this paper, we write
$A = R^{[n]}$ to indicate that $A$ is a polynomial ring in
$n$ variables over $R$.
The proof of \ref{dfkljasdklfja} is given in
Section~\ref{Sectiondfpq9wekzjkjjdf'al}.
The next fact
is the special case ``$A=\kk1$'' of \ref{dpfiouqawklej};
it shows that wild degree functions with values in $\Nat$ do exist:

\begin{proposition} \label{odfasdjflkal;ksdf}  
Let $\bk$ be an uncountable field of characteristic zero and
$B = \bk[X,Y,Z] = \kk3$.
Then there exists a degree function
$\deg: B \to \Nat \cup \{ -\infty \}$
such that $\deg(\lambda)=0$ for all $\lambda \in \bk^*$ and
with respect to which the degree of $\frac{\partial}{\partial X} : B \to B$
is not defined.
\end{proposition}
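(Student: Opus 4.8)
The plan is to construct the degree function as the limit of a ``generating sequence'' (key‑polynomial) construction in the variable $X$ over the subring $\bk[Y,Z]$, designed so that a single explicit sequence of polynomials makes $\deg\bigl(\frac{\partial}{\partial X}\bigr)$ fail to be defined.

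I begin with the reduction that isolates what must be built. Suppose we have a degree function $\deg\colon B\to\Nat\cup\{-\infty\}$ with $\deg X=1$, together with polynomials
$$
p_0=X,\qquad p_n=p_{n-1}^{\,2}+c_n\quad(n\ge 1),\qquad c_n\in\bk[Y,Z],
$$
such that $\deg p_n=1$ for all $n$. Then $\deg\bigl(\frac{\partial}{\partial X}\bigr)$ is not defined. Indeed, since $\frac{\partial c_n}{\partial X}=0$ we have $\frac{\partial p_n}{\partial X}=2\,p_{n-1}\frac{\partial p_{n-1}}{\partial X}$ for $n\ge 1$; as the degree of a product of nonzero elements of a domain is the sum of the degrees, $e_n:=\deg\bigl(\frac{\partial p_n}{\partial X}\bigr)$ satisfies $e_n=\deg p_{n-1}+e_{n-1}=1+e_{n-1}$, while $e_0=\deg\bigl(\frac{\partial X}{\partial X}\bigr)=\deg 1=0$, so $e_n=n$. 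Hence $\deg\bigl(\frac{\partial p_n}{\partial X}\bigr)-\deg p_n=n-1\to+\infty$. (Note that $\deg\lambda=0$ for every $\lambda\in\bk^{*}$ is automatic, since $\deg\lambda+\deg\lambda^{-1}=\deg 1=0$ and both summands lie in $\Nat$.)

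It remains to construct $\deg$ together with the $c_n$. I would build a non‑increasing sequence $\deg_0\ge\deg_1\ge\deg_2\ge\cdots$ of $\Nat$‑valued degree functions on $B$: $\deg_0$ is the standard total degree (so $\deg_0X=\deg_0Y=\deg_0Z=1$), and $\deg_n$ is obtained from $\deg_{n-1}$ by the augmentation taking $p_n=p_{n-1}^{\,2}+c_n$ as a new key polynomial and assigning it $\deg_n p_n=1$. For this to be legitimate, $c_n\in\bk[Y,Z]$ must be chosen so that $p_n$ is a key polynomial for $\deg_{n-1}$ — this forces $\deg_{n-1}c_n=2=\deg_{n-1}p_{n-1}^{\,2}$ and imposes an irreducibility condition on the leading form of $c_n$ in $\Gr_{\deg_{n-1}}(B)$ — and then assigning the value $1<2=\deg_{n-1}p_n$ is permissible. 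Since the $X$‑degree of $p_n$ is $2^{\,n}$, the value of any fixed element of $B$ is changed by only finitely many of the augmentations, so $\deg:=\inf_n\deg_n$ is a well‑defined degree function with $\deg X=1$ (unchanged from $\deg_0$) and $\deg p_n=1$ for all $n$, as required.

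The main obstacle is to make this construction actually go through, and this is where the two auxiliary variables and the uncountability of $\bk$ enter. One must show that at every stage a coefficient $c_n\in\bk[Y,Z]$ with the required leading form exists — equivalently, that the image of $\Gr_{\deg_0}\bigl(\bk[Y,Z]\bigr)$ in $\Gr_{\deg_{n-1}}(B)$ stays ``large enough'' in each relevant degree as the augmentations accumulate; a single extra variable is too rigid for this, and keeping the relevant leading forms in sufficiently general position at \emph{all} stages is what consumes the hypothesis that $\bk$ is uncountable. One must also check that $\inf_n\deg_n$ remains $\Nat$‑valued, i.e. that no nonzero polynomial ever acquires negative degree under the augmentations. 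Carrying out this bookkeeping of the graded rings $\Gr_{\deg_n}(B)$ is the technical heart; once it is in place, the elementary computation above completes the proof.
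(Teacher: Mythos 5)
Your opening reduction is sound, and it is in fact the same mechanism the paper exploits: in the paper's construction one has $y_{p+1}=y_p^2-e_p^2x^3$ with $\deg(y_p)=3$ for all $p$ and $D(y_{p+1})=2y_pD(y_p)$, forcing $\deg(Dy_p)-\deg(y_p)=3p-3\to\infty$ (Lemma \ref{i23r4n2349qkdflak}). Your remark that $\deg(\lambda)=0$ is automatic for an $\Nat$-valued degree function is also correct. So the entire burden of the proposition is the existence of an $\Nat$-valued degree function with $\deg(p_n)$ bounded along such a quadratic recursion, and that is exactly the part your write-up does not deliver: your last paragraph names the ``technical heart'' instead of carrying it out.

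Concretely, the gaps are these. (i) You never exhibit the $c_n$, nor prove that at each stage there is a $c_n\in\bk[Y,Z]$ of $\deg_{n-1}$-value $2$ making $p_n$ a key polynomial for $\deg_{n-1}$. (ii) You never prove that the augmented function $\deg_n$ is multiplicative; this is MacLane's theorem, and it genuinely fails without the residual irreducibility hypothesis (if $-c_1$ were a square $\alpha^2$ in $\bk[Y,Z]$ one would have $(X+\alpha)(X-\alpha)=p_1$, so $\deg_1$ of the product would be $1$ while the factors each have degree $1$). (iii) Verifying that irreducibility at stage $n$ takes place in an iterated quadratic extension of $\bk(Y,Z)$ generated by $\gr(X),\gr(p_1),\dots,\gr(p_{n-1})$ with $\gr(p_{i-1})^2=-\gr(c_i)$; what is actually needed is that no product $\prod_{i\in S}\bigl(-\gr(c_i)\bigr)$, $S\neq\emptyset$, is a square, which is precisely the content of the paper's Lemma \ref{difuqpweijddkfkdkd} and is neither stated nor proved by you. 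Your heuristic about where uncountability enters is not an argument and does not match the paper's actual use of it (there it is consumed in Lemma \ref{difupaw9efok}, choosing each $a_{p+1}$ outside a countable bad set while landing in an uncountable target). For contrast, the paper sidesteps the valuation-theoretic machinery by embedding $B$ into $\bk_2((t))$ via $x=t^{-2}$, $y=t^{-3}\sum a_nt^{3n}$ and taking $\deg=-\ord_t$, concentrating the difficulty into the single inequality $\ord_t(f)\le 0$ for nonzero $f$ (Lemma \ref{diofua;skdmflk}), proved with the expansion Lemma \ref{dpfiuqpwejdakl} and Lemma \ref{difuqpweijddkfkdkd}. Your MacLane-style route looks completable, but as written it is a plan for a proof, not a proof.
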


We have a similar result for $B=\kk2$, but with more restrictions on $\bk$:

\begin{proposition} \label{87837400023823897}
Let $\bk$ be a function field%
\footnote{A {\it function field\/} is a finitely generated field extension 
of transcendence degree at least~$1$.}
over an uncountable field of characteristic zero,
and let $B = \bk[X,Y] = \kk2$.
Then there exists a degree function
$\deg: B \to \Nat \cup \{ -\infty \}$
such that $\deg(\lambda)=0$ for all $\lambda \in \bk^*$ and
with respect to which the degree of $\frac{\partial}{\partial X} : B \to B$
is not defined.
\end{proposition}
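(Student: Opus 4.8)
The plan is to deduce this from the three-variable situation (\ref{odfasdjflkal;ksdf}, in the form given by \ref{dpfiouqawklej}), using the function field hypothesis on $\bk$ to manufacture an auxiliary ``third variable'' lying inside $\bk$ itself. Write $\bk$ as a finitely generated extension of an uncountable field $E$ of characteristic zero with $\trdeg_E\bk = d \ge 1$, fix a transcendence basis $t_1,\dots,t_d$ of $\bk/E$, and set $Z := t_1$ and $F := E(t_2,\dots,t_d)$ (so $F = E$ when $d=1$). Then $F \subseteq \bk$ is uncountable of characteristic zero, $Z$ is transcendental over $F$, and $\bk$ is algebraic over $F(Z)$ and therefore --- being finitely generated --- \emph{finite} over $F(Z)$. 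Inside $B = \bk[X,Y]$ sits the polynomial subring $F[X,Y,Z] = F^{[3]}$, which $\frac{\partial}{\partial X} : B \to B$ maps into itself, inducing there the usual partial derivative of $F[X,Y,Z]$.

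First I would apply the three-variable construction over $F$ in the strong form: a degree function $\delta : F[X,Y,Z] \to \Nat \cup \{-\infty\}$ that is \emph{trivial on the line $F[Z]$} --- i.e.\ $\delta(Z) = 0$, equivalently, since $\delta$ is $\Nat$-valued, $\delta(p) = 0$ for every $p \in F[Z]\setminus\{0\}$ --- and for which $\delta\bigl(\tfrac{\partial}{\partial X}\bigr)$ is not defined. This is precisely the case ``$A = \kk1$'' of \ref{dpfiouqawklej}, applied over $F$ with $A = F[Z]$. Fix $f_n \in F[X,Y,Z]\setminus\{0\}$ (necessarily with $\tfrac{\partial f_n}{\partial X} \ne 0$) so that $\delta\bigl(\tfrac{\partial f_n}{\partial X}\bigr) - \delta(f_n) \to +\infty$. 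Using that $\delta$ is trivial on $F[Z]\setminus\{0\}$, one extends $\delta$ to a degree function on the localization $F(Z)[X,Y]$; it is still $\Nat$-valued and is now trivial on the whole field $F(Z)$.

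The second half is the lift from $F(Z)[X,Y]$ to $\bk[X,Y]$. Put $v := -\delta$, a valuation of the field $F(Z)(X,Y)$ that is trivial on $F(Z)$, satisfies $v \le 0$ on $F(Z)[X,Y]$, and has value group a (nonzero) subgroup of $\Integ$. As $\bk(X,Y)$ is a finite extension of $F(Z)(X,Y)$, extend $v$ to a valuation $\tilde v$ of $\bk(X,Y)$; its value group contains that of $v$ with finite index, so $v' := N\tilde v$ is $\Integ$-valued for a suitable integer $N \ge 1$. Since $\bk/F(Z)$ is algebraic and $v$ is trivial on $F(Z)$, $v'$ is trivial on $\bk$; and since $\bk[X,Y]$ is integral over $F(Z)[X,Y]$ (because $\bk$ is integral over $F(Z)$) while $v' \le 0$ on $F(Z)[X,Y]$, applying the ultrametric inequality to an integral equation of an element of $\bk[X,Y]$ over $F(Z)[X,Y]$ shows $v' \le 0$ on all of $\bk[X,Y]$. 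Hence $\deg := -v'|_{\bk[X,Y]}$ is a degree function $\bk[X,Y] \to \Nat \cup \{-\infty\}$ with $\deg(\lambda) = 0$ for every $\lambda \in \bk^*$. Finally, for the $f_n \in F[X,Y,Z] \subseteq \bk[X,Y]$ above one has $\deg\bigl(\tfrac{\partial f_n}{\partial X}\bigr) - \deg(f_n) = N\bigl(\delta\bigl(\tfrac{\partial f_n}{\partial X}\bigr) - \delta(f_n)\bigr) \to +\infty$, so $\deg\bigl(\tfrac{\partial}{\partial X}\bigr)$ is not defined, as required.

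The step that carries the real weight is the very first one: producing, for $F^{[3]}$, a $\Nat$-valued degree function that is simultaneously wild with respect to $\frac{\partial}{\partial X}$ and trivial on the auxiliary line $F[Z]$. It is exactly the triviality on $F[Z]$, forcing $\deg(Z) = 0$, that permits the localization at $F[Z]\setminus\{0\}$ and hence the valuation-theoretic lift --- without it the extended valuation would be positive somewhere on $\bk$ and the resulting function would not take values in $\Nat$. I am counting on \ref{dpfiouqawklej} (whose parameter $A$, here $F[Z]$, is meant for exactly this) to deliver such a $\delta$; if that theorem only records triviality on the units $F^*$, one must instead revisit the construction behind \ref{odfasdjflkal;ksdf} and check that the resonance generating the pathology involves only $X$ and $Y$, with $Z$ entering solely through coefficients and so receiving degree $0$. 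The remaining ingredients --- Chevalley extension of a valuation to a finite extension, normalizing its value group, and transporting the inequality $v' \le 0$ across an integral extension --- are routine.
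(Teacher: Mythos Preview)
Your first step is fine: \ref{dpfiouqawklej} with $A=F[Z]$ really does give a degree function $\delta$ on $F[X,Y,Z]$ with values in $\Nat$, trivial on all of $F[Z]$, and wild for $\partial/\partial Y$ (hence, after relabelling, for $\partial/\partial X$). The localization to $F(Z)[X,Y]$ is also harmless.

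The gap is in the lift to $\bk[X,Y]$. The ``ultrametric inequality applied to an integral equation'' proves the \emph{opposite} implication: from $b^n+c_{n-1}b^{n-1}+\dots+c_0=0$ with $v'(c_i)\ge 0$ one gets $v'(b)\ge 0$ (this is just integral closedness of valuation rings). With the hypothesis $v'(c_i)\le 0$ you need, the same computation gives only $(n-j)\,v'(b)\ge v'(c_j)$ for some $j<n$, which is perfectly compatible with $v'(b)>0$. So the argument you sketch does not show $v'\le 0$ on $\bk[X,Y]$, and the conclusion is in fact false for a generic Chevalley extension. The paper's own construction furnishes a counterexample: with $\bk_1$ playing the role of your $F(Z)$, one has $\deg_1\ge 0$ on $\bk_1[x,y]$ (\ref{diofua;skdmflk}), yet for the finite extension $\bk'=\bk_1(a_0,a_1,a_2)$ the element $w=y^2-a_0^2x^3-2a_1y-2a_0a_2+a_1^2\in\bk'[x,y]$ has $-\ord_t(w)<0$; here $\bk'[x,y]$ is integral over $\bk_1[x,y]$ and $-\ord_t$ is an extension of $\deg_1$. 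Thus nonnegativity on a polynomial ring simply does not propagate along integral (even finite) extensions of the coefficient field.

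The paper avoids this issue by not lifting at all: it applies \ref{dpfiouqawklej} directly with $A=\bk$ (a field is allowed, since $\Frac(\bk)=\bk$ is a function field over the given uncountable subfield), obtaining at once a degree function on $\bk[X,Y]$ with values in $\Nat$, trivial on $\bk^*$, and wild for $\partial/\partial Y$. Equivalently, this is part~(e) of \ref{dfqp9023rklawej}. The hard work---showing the degree stays in $\Nat$ on all of $\bk[x,y]$---is done once and for all in \ref{diofua;skdmflk} via the expansion argument, precisely the step your valuation-theoretic shortcut tries to replace.
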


Result \ref{87837400023823897} is an immediate consequence of part~(e) of
the next result,  %\ref{dfqp9023rklawej}.
which exhibits some pathologies with respect to
the process of extending degree functions:

\begin{proposition}   \label{dfqp9023rklawej}
Let $\bk_0$ be an uncountable field of characteristic zero,
$\bk_1$ a function field over $\bk_0$
and $\bk_2$ the algebraic closure of $\bk_1$.
Consider the polynomial rings $B_0 \subset B_1 \subset B_2$,
where $B_i = \bk_i[X,Y] = \bk_i^{[2]}$.
Then there exist degree functions
$$
\deg_0 : B_0 \to \Nat \cup \{ \infty \},
\quad 
\deg_1 : B_1 \to \Nat \cup \{ \infty \}
\quad \text{and} \quad 
\deg_2 : B_2 \to \Integ \cup \{ \infty \}
$$
satisfying the following conditions:

\smallskip
\begin{enumerata}
\setlength{\itemsep}{1mm}

\item if $i \le j$ then $\deg_i$ is the restriction of $\deg_j$;

\item for each $i=0,1,2$, $\deg_i( \lambda )=0$ for all $\lambda \in \bk_i^*$;

\item $\deg_0$ is determined by the grading $B_0 = \bigoplus_{i \in \Nat} R_i$
of $B_0$ defined by $X \in R_2$ and $Y \in R_3$
but, for each $i=1,2$, $\deg_i$ is not determined by a grading of $B_i$;

\item $\Gr( B_i )$ is affine over $\bk_i$ if $i \in \{0,2\}$,
but not if $i=1$;

\item $\deg_i( D_i )$ is defined if $i=0$ but not if $i \in \{1,2\}$,
where $D_i  = \frac{\partial}{\partial Y} : B_i \to B_i$.

\end{enumerata}
\end{proposition}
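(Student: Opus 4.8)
The plan is to construct the three degree functions in the order $\deg_0\subset\deg_1\subset\deg_2$, each an extension of the preceding one, so that condition~(a) holds automatically; the content is in engineering exactly the right behaviour at each extension. For $\deg_0$ I would take the degree function determined by the grading described in~(c), i.e.\ $\deg_0\big(\sum c_{ab}X^aY^b\big)=\max\{\,2a+3b:c_{ab}\neq 0\,\}$ for $c_{ab}\in\bk_0$. Then (b) is trivial, (c) holds by construction, $\Gr(B_0)\isom B_0$ is affine (the case $i=0$ of~(d)), and for~(e) one checks on $\deg_0$-homogeneous components that $D_0=\partial/\partial Y$ drops the $\deg_0$-degree of a nonzero homogeneous element by exactly $3$, or annihilates it; hence $\deg_0(D_0)=-3$ is defined.

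The heart of the matter is $\deg_1$. Pick a transcendental element $t\in\bk_1$ over $\bk_0$ (possible since $\trdeg(\bk_1/\bk_0)\ge 1$) and extend $\deg_0$ to a \emph{non-monomial} degree function $\deg_1$ on $B_1$, realised as the valuation-type degree function attached to a ``formal branch'' whose coefficients involve $t$, carrying an \emph{infinite} sequence of key polynomials $\phi_1,\phi_2,\dots\in B_1$. The construction should be arranged so that: (i)~no cancellation of top terms occurs among $\bk_0$-coefficient polynomials — this is where the transcendence over $\bk_0$ of the coefficients used is essential — so that $\deg_1|_{B_0}=\deg_0$, while $\deg_1$ stays $\Nat$-valued with $\deg_1(\bk_1^*)=0$, giving (b); (ii)~the key polynomials can be taken of the form $\phi_1=Y^2-cX^3$ and $\phi_{n+1}=\phi_n^{\,2}-(\text{a scalar monomial in }X)$, so that $D_1$ acts multiplicatively along the chain, $\partial\phi_{n+1}/\partial Y=2\phi_n\,\partial\phi_n/\partial Y$, whence $\partial\phi_n/\partial Y=2^{\,n}Y\phi_1\cdots\phi_{n-1}$ and $\deg_1(\partial\phi_n/\partial Y)=\deg_1(Y)+\sum_{m<n}\deg_1(\phi_m)$, while $\deg_1(\phi_n)$ lags behind this last quantity by a margin that grows with $n$; a short computation then gives $\deg_1(\partial\phi_n/\partial Y)-\deg_1(\phi_n)\to+\infty$, so $\deg_1(D_1)$ is undefined, which is~(e) for $i=1$; (iii)~the infinitude of the chain forces $\Gr(B_1)$ to need infinitely many algebra generators, so $\Gr(B_1)$ is not affine (the case $i=1$ of~(d)), and in particular $\Gr(B_1)\not\isom B_1$, so $\deg_1$ is not determined by a grading, giving~(c) for $i=1$.

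For $\deg_2$, extend $\deg_1$ to $B_2=\bk_2[X,Y]$ along the algebraic extension $\bk_2(X,Y)/\bk_1(X,Y)$ (a degree function always admits such an extension). The reason to pass to the algebraic closure is that the arithmetic obstructions responsible for the infinitude of the key-polynomial chain over $\bk_1$ can be dissolved over $\bk_2$: one chooses the extension so that the chain becomes finite and hence $\Gr(B_2)$ becomes a finitely generated $\bk_2$-algebra, giving the case $i=2$ of~(d). This is necessarily paid for by $\deg_2$ taking negative values on some polynomials with genuine $\bk_2$-coefficients, so that $\deg_2$ is only $\Integ$-valued, and is again not determined by a grading, giving~(c); meanwhile the polynomials $\phi_n\in B_1\subset B_2$, on which $\deg_2$ agrees with $\deg_1$, still satisfy $\deg_2(\partial\phi_n/\partial Y)-\deg_2(\phi_n)\to+\infty$, so $\deg_2(D_2)$ is undefined, completing~(e). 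Condition (b) for $i=2$ follows by applying $\deg_2$ to a minimal polynomial of $\alpha\in\bk_2^*$ over $\bk_1$ and using $\deg_2|_{\bk_1^*}=\deg_1|_{\bk_1^*}=0$: were $\deg_2(\alpha)\neq 0$, the maximum among the degrees of the terms would be attained uniquely, contradicting that the sum is $0$.

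The main obstacle is the simultaneous control of condition~(d) across the tower. One must produce a $\deg_1$ whose associated graded ring is genuinely non-finitely-generated over $\bk_1$ — via an infinite key-polynomial chain that \emph{also} witnesses the wildness of $\partial/\partial Y$ and is compatible with the monomial $\deg_0$ on $B_0$ — while arranging that, over the algebraically closed $\bk_2$, a compatible extension collapses this chain to a finite one with $\Gr(B_2)$ affine, and that in the process $\partial/\partial Y$ remains wild and $\deg_2$ acquires negative values. Pinning down the precise mechanism by which the chain is infinite over $\bk_1$ but terminable over $\bk_2$, together with the verification that $\deg_1$ really does stay $\Nat$-valued and the explicit identification of $\Gr(B_1)$ and $\Gr(B_2)$, is where essentially all of the technical work lies; by contrast, once the chain and the multiplicative behaviour of $\partial/\partial Y$ along it are in place, the wildness statements in~(e) are immediate.
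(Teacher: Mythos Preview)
Your skeleton is right—the recursive chain $\phi_{n+1}=\phi_n^2-c_nX^3$ and the product formula $\partial\phi_n/\partial Y=2^nY\phi_1\cdots\phi_{n-1}$ are precisely the paper's mechanism for wildness—but the order of construction and one key mechanism differ from the paper, and the latter is a genuine misconception.

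The paper builds top-down rather than bottom-up: it first produces an uncountable set $\Ueul\subset\bk_2$ with $u^2\in\bk_1$ for every $u\in\Ueul$, $\prod_{u\in S}u\notin\bk_1$ for every nonempty finite $S\subset\Ueul$, and some $u_0\in\Ueul$ transcendental over $\bk_0$. It then embeds all three $B_i$ simultaneously into $\bk_2((t))$ by setting $x=t^{-2}$ and $y=t^{-3}\sum_{n\ge0} a_nt^{3n}$ for a carefully chosen $(a_n)\subset\bk_2$ with $a_0=u_0$, and takes each $\deg_i$ to be the restriction of $-\ord_t$. This makes (a) and (b) immediate, and $\Gr(B_2)$ is $\bk_2$-affine for free because $\bk_2\subset B_2\subset\bk_2((t))$ forces $\Gr(B_2)\hookrightarrow\bk_2[t,t^{-1}]$. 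Your claim that ``the chain becomes finite over $\bk_2$'' is not what happens and would not work: since $\phi_n\in B_1$ and $\deg_2|_{B_1}=\deg_1$, the $\phi_n$ persist with $\deg_2(\phi_n)=3$ for all $n$. What changes is that a polynomial with genuine $\bk_2$-coefficients, for instance $y^2-a_0^2x^3-2a_1y-2a_0a_2+a_1^2$, acquires \emph{negative} degree, so the value set of $\deg_2$ jumps from $\langle 2,3\rangle$ to all of $\Integ$; that, not any termination of the chain, is why $\Gr(B_2)$ is affine.

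The crux you flag—that $\deg_1$ lands in $\Nat$—is handled by a specific device. An expansion lemma writes every $G\in\bk_1[X,Y]$ uniquely as $\sum_j\alpha_j(X)\prod_{p\in S_j}F_p$ with $F_0=Y$, $F_{p+1}=F_p^2-e_p^2X^3$, $e_p\in\Ueul$; after substituting $x,y$, the leading $t$-coefficient of the $j$-th summand is (up to a factor in $\bk_1^*$) $\prod_{p\in S_j}e_p$, and these products are linearly independent over $\bk_1$ by the design of $\Ueul$. Hence no leading-term cancellation occurs and $\deg_1(G)\in\langle 2,3\rangle$. The transcendence of $a_0$ over $\bk_0$ is what then forces $\deg_1|_{B_0}=\deg_0$. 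Finally, the paper does not argue directly that the infinite chain makes $\Gr(B_1)$ need infinitely many generators; instead it invokes its own positive result that an $\Nat$-valued degree function with $\bk_1$-affine associated graded ring must be tame, and infers non-affineness of $\Gr(B_1)$ from the already-established wildness of $D_1$.
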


See \ref{dkfja;ksdjf;akj} for the proof of \ref{dfqp9023rklawej}.
The notion of a degree function {\it determined by a grading\/} is
defined in \ref{dfjapisdjfa;kj}.
It may be worthwile to state the following consequence of 
\ref{dfqp9023rklawej}:

\begin{corollary} \label{dfuqpiejrqkmkkkkd}
Let $S$ be the set of degree functions
$\deg : \Comp[X,Y] \to \Integ \cup \{-\infty\}$
satisfying 
$\deg(\lambda) = 0$ for all $\lambda \in \Comp^*$,
$\deg(X)=2$ and $\deg(Y)=3$.
Then there exist elements $d$ and $d'$ of $S$ satisfying:
$$
\textstyle
d \mid_{\Rat[X,Y]} = d' \mid_{\Rat[X,Y]},
\text{\ \ $d$ is $\Comp$-tame and $d'$ is $\Comp$-wild.}
$$
\end{corollary}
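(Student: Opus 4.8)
The plan is to extract Corollary~\ref{dfuqpiejrqkmkkkkd} directly from Proposition~\ref{dfqp9023rklawej} by specializing the base fields. First I would choose $\bk_0$ inside $\Comp$ to be an uncountable field of characteristic zero that is \emph{algebraically closed} — for instance $\bk_0 = \Comp$ itself — so that $\bk_2$, being the algebraic closure of a function field $\bk_1$ over $\bk_0$, can be realized as a subfield of $\Comp$. (Here one uses that $\Comp$ has uncountable transcendence degree over $\Rat$, so there is room to embed $\bk_1 = \bk_0(t)$ and then its algebraic closure into $\Comp$.) Having fixed such embeddings, Proposition~\ref{dfqp9023rklawej} supplies degree functions $\deg_0, \deg_1, \deg_2$ on $B_0 \subset B_1 \subset B_2$ with $B_i = \bk_i[X,Y]$, nested compatibly by part~(a), normalized by part~(b), and with $\deg_2(D_2)$ undefined (part~(e)) while $\deg_0(D_0)$ is defined.

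Next I would produce the two elements $d, d'$ of $S$. Since $\bk_2 \subseteq \Comp$, the ring $B_2 = \bk_2[X,Y]$ sits inside $\Comp[X,Y]$; the issue is to extend $\deg_2$ (a $\Integ$-valued degree function on $B_2$) to all of $\Comp[X,Y]$. This is a standard extension: pick a transcendence basis of $\Comp$ over $\bk_2$ and assign degree $0$ to each basis element, then assign degree $0$ to every element of the algebraic closure of the resulting field inside $\Comp$; iterating (or invoking the general fact that a degree function on a domain $B$ extends to any domain $B'$ containing $B$ with $\Frac(B') $ an extension of $\Frac(B)$, using that $G = \Integ$ can first be replaced by a larger ordered group and then one checks the image stays in $\Integ$ because no new ramification is forced when one only adds transcendentals and algebraic elements of a $\Integ$-valued valuation on a field — here one should be a little careful, see below). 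Call the resulting extension $d'$. For $d$, I would instead start from $\deg_0$ on $B_0 = \bk_0[X,Y]$, which is \emph{determined by the grading} $X \in R_2$, $Y \in R_3$ (part~(c)), and take $d$ to be the degree function on $\Comp[X,Y]$ determined by that \emph{same} grading $\deg(X) = 2$, $\deg(Y) = 3$, $\deg(\lambda) = 0$ for $\lambda \in \Comp^*$. Both $d$ and $d'$ then lie in $S$ by construction.

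It remains to check the three asserted properties. That $d\mid_{\Rat[X,Y]} = d'\mid_{\Rat[X,Y]}$ holds because on $\Rat[X,Y]$ both are the grading-degree: for $d$ this is the definition, and for $d'$ one notes $\Rat[X,Y] \subseteq B_0 = \bk_0[X,Y]$, that $d'$ restricted to $B_0$ equals $\deg_0$ (by part~(a), $\deg_0 = \deg_1\mid_{B_0} = \deg_2\mid_{B_0} = d'\mid_{B_0}$), and that $\deg_0$ on $\Rat[X,Y]$ is again the grading-degree since $\Rat \subseteq \bk_0^*$ gets degree $0$. That $d$ is $\Comp$-tame follows from the general principle (which I would either cite from the earlier sections or reprove in a line) that a degree function \emph{determined by a grading} of a finitely generated algebra is always tame: the associated graded ring is the ring itself, homogeneous components are finite-dimensional in each bounded degree range, and for any derivation $D$ one bounds $\deg(Dx) - \deg x$ by examining $D$ on the finitely many generators $X, Y$. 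That $d'$ is $\Comp$-wild follows because $d'\mid_{B_2} = \deg_2$ and $\deg_2(D_2)$ is not defined, where $D_2 = \partial/\partial Y$; the derivation $\partial/\partial Y : \Comp[X,Y] \to \Comp[X,Y]$ restricts to $D_2$ on $B_2$, and if $d'(\partial/\partial Y)$ were defined it would bound $d'(D_2 x) - d'(x)$ for all $x \in B_2$, contradicting that $\deg_2(D_2)$ is undefined.

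The main obstacle I expect is the extension step producing $d'$: one must ensure the extension of $\deg_2$ from $B_2$ to $\Comp[X,Y]$ stays $\Integ$-valued (rather than landing in a larger ordered group) and restricts to $\deg_2$ on $B_2$. The safe route is to first extend $\deg_2$ to $\Frac(B_2) = \bk_2(X,Y)$, note that $\Comp(X,Y)$ is the function field of $\Comp[X,Y]$ and that $\Comp(X,Y) / \bk_2(X,Y)$ is built from a purely transcendental part followed by an algebraic part; for the transcendental part, extend the valuation by declaring the new transcendentals to have value $0$ (this keeps the value group $\Integ$), and for the algebraic part, pass to an extension whose value group is $\frac1N\Integ$ for some $N$ — but since we only need a degree function on the subring $\Comp[X,Y]$ and we are free to \emph{rescale} $\deg_2$ by a positive integer at the very start (replacing $(2,3)$ by $(2N, 3N)$ would break membership in $S$), the cleanest fix is to observe that $\Comp$ has an immediate valued-field structure over the $\Integ$-valued field $\bk_2(X,Y)$ in which all of $\Comp$ has value $0$ — equivalently, choose the extension of the valuation to $\Comp(X,Y)$ that is trivial on $\Comp$, which exists since $\Comp \cap \bk_2(X,Y)$-valuation is $\deg_2$ itself restricted to constants $= 0$. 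Making this last point precise — that one can extend a $\Integ$-valued degree function on $\bk_2[X,Y]$ with $\deg_2\mid_{\bk_2^*} = 0$ to a $\Integ$-valued degree function on $\Comp[X,Y]$ with $d'\mid_{\Comp^*} = 0$ — is the only genuinely technical ingredient, and it is a routine application of the existence of extensions of valuations together with the observation that adjoining elements of value $0$ does not enlarge the value group.
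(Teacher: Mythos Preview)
Your overall strategy---invoke Proposition~\ref{dfqp9023rklawej}, take $d$ to be the grading degree on $\Comp[X,Y]$, and build $d'$ out of $\deg_2$---is exactly the paper's. But you have made the construction harder than necessary, and one concrete step is wrong.

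The error: you suggest taking $\bk_0 = \Comp$. Then $\bk_1$, being a function field over $\Comp$, has $\trdeg_\Comp(\bk_1) \ge 1$ and so cannot be embedded in $\Comp$ compatibly with the inclusion $\bk_0 \subset \bk_1$; a fortiori neither can its algebraic closure $\bk_2$. Your parenthetical about ``room to embed'' simply does not apply with this choice.

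The paper sidesteps the entire extension problem by choosing $\bk_0$ and $\bk_1$ so that $\bk_2 = \Comp$ on the nose. Pick a transcendence basis $T$ of $\Comp$ over $\Rat$, fix $t_0 \in T$, and set $\bk_0 = \Rat(T \setminus \{t_0\})$ (uncountable, characteristic zero) and $\bk_1 = \bk_0(t_0) = \Rat(T)$ (a function field over $\bk_0$). Since $\Comp$ is algebraic over $\Rat(T)$ and algebraically closed, the algebraic closure of $\bk_1$ is $\Comp$ itself. Now $B_2 = \bk_2[X,Y] = \Comp[X,Y]$ already, and one simply takes $d' = \deg_2$; no extension is needed. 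Parts (b), (c), (e) of Proposition~\ref{dfqp9023rklawej} then give $d' \in S$, $d'|_{\Rat[X,Y]} = \deg_0|_{\Rat[X,Y]} = d|_{\Rat[X,Y]}$, and $d'$ wild over $\Comp$; tameness of $d$ is \ref{dfj;askdjflaksj}\eqref{diur18723hj}, exactly as you say.

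Your proposed extension argument (Gauss-extend over a transcendence basis of $\Comp/\bk_2$, then handle the algebraic part) could probably be pushed through---one can argue that every $\alpha \in \Comp$ is integral over the valuation ring and hence gets value $0$, so the value group stays $\Integ$---but your discussion of this step is not complete, and the rescaling fix you float would indeed destroy membership in $S$. The point is that all of this machinery evaporates once one realizes $\Comp$ itself as the $\bk_2$ of Proposition~\ref{dfqp9023rklawej}.
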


See \ref{kdfjiidfjie838} for the proof of \ref{dfuqpiejrqkmkkkkd}.

\medskip
The proof of \ref{dfkljasdklfja} is quite simple, but those
of \ref{odfasdjflkal;ksdf}--\ref{dfqp9023rklawej} are more delicate
because they involve constructing degree functions {\it with nonnegative
values\/} and which are still wild.
The crucial step is the proof, in \ref{diofua;skdmflk}, that
$\ord_t(f) \le 0$ for every nonzero element $f$ of the subring
$\bk_1[x,y]$ of $\bk_2((t))$.
The idea that this inequality could be proved by using an expansion
lemma such as \ref{dpfiuqpwejdakl} 
was inspired by past frequentations with
expansion techniques \`a la Abhyankar-Sathaye.

\bigskip
Section~\ref{Secd;lkfjpqw93ejdkla} proves an array of results which assert
that degree functions satisfying certain hypotheses are tame.
Some of those facts are summarized in the following statement,
but note that the results of Section~\ref{Secd;lkfjpqw93ejdkla} are stronger:

\begin{theorem} \label{dfj;askdjflaksj}
Suppose that $B$ is an integral domain
containing a field $\bk$ of characteristic zero.
Let $G$ be a totally ordered abelian group
and $\deg : B \to G \cup \{ -\infty\}$ a degree function.
Then, in each of the cases {\rm (a--d)} below,
% $\deg(D)$ is defined for all $\bk$-derivations $D: B \to B$ (i.e., 
$\deg$ is tame over $\bk$:
\begin{enumerata}
\setlength{\itemsep}{1.5mm}

\item  \label{diur18723hj}
$B$ is $\bk$-affine and $\deg$ is determined by some $G$-grading of $B$.

\item  \label{747438463746}
$\Gr(B)$ is $\bk$-affine and
$\setspec{ \deg(x) }{ x \in B \setminus \{0\} }$ is a well-ordered
subset of $G$.

\item  \label{sdp98123897jkd}
$\trdeg_\bk(B)<\infty$, $\Frac(B)$ is a one-dimensional function field over 
the field of fractions of the ring $\setspec{ x \in B }{ \deg(x) \le 0 }$,
and $\deg$ has values in $\Nat$.

\item  \label{p2934oq2jwdkj}
$\trdeg_\bk(B)<\infty$ and $\deg = \deg_\Delta$ for some
locally nilpotent derivation \mbox{$\Delta : B \to B$.}

\end{enumerata}
\end{theorem}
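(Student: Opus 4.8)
The plan is to handle the four cases by reducing each to a statement about the associated graded ring $\Gr(B)$, where homogeneous derivations are easier to control. Recall that for a $\bk$-derivation $D : B \to B$, "defined" means the set $U = \setspec{\deg(Dx)-\deg(x)}{x\neq 0}$ has a greatest element. The common mechanism I would exploit is: if $\deg(D)$ were \emph{not} defined, there is a sequence $x_n \in B\setminus\{0\}$ with $\deg(Dx_n)-\deg(x_n) \to \sup U$ strictly increasing (or unbounded), and one derives a contradiction from finiteness properties of $\Gr(B)$ or of the value set.

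For case \ref{diur18723hj}: when $\deg$ is determined by a $G$-grading $B = \bigoplus_{g} B_g$, every $x \in B$ has a finite homogeneous decomposition, and $\deg(Dx) \le \max_i \deg(D b_i)$ over the homogeneous components $b_i$ of $x$. Since $B$ is $\bk$-affine, pick homogeneous generators $y_1,\dots,y_r$; any homogeneous $b$ is a $\bk$-linear combination of monomials in the $y_j$ of the same degree, and $\deg(Db)$ is bounded using the Leibniz rule in terms of $\max_j(\deg(Dy_j)-\deg(y_j))$ times the "length" of the monomial. The key finiteness is that, in a $\bk$-affine graded domain, for each fixed $g$ there are only finitely many $j$ with $\deg(y_j)$ contributing, and the length of a monomial of degree $g$ built from the $y_j$ is bounded. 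I would make this precise and conclude $\deg(D) = \max_j(\deg(Dy_j)-\deg(y_j))$, which is attained, hence defined.

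For case \ref{747438463746}: the well-ordering of the value set $\setspec{\deg(x)}{x\neq 0}$ is what replaces the grading hypothesis — it guarantees $\Gr(B)$ is well-behaved and, more importantly, that an infinite strictly increasing sequence in $U \subseteq G$ converging to its sup cannot be "fed" by the value set (a bounded strictly increasing sequence in a well-ordered set is impossible unless it stabilizes, but we need this on the shifted differences). Here I expect to pass to $\gr$ of a hypothetical "$D$ with $\deg(D)$ undefined" — but since $\gr(D)$ only exists when $\deg(D)$ is defined, the argument must instead be direct: assume $\deg(D)$ undefined, take $x_n$ with strictly increasing $\deg(Dx_n)-\deg(x_n)$, and use $\bk$-affineness of $\Gr(B)$ together with well-ordering to bound these differences, contradiction. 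For case \ref{sdp98123897jkd}, let $B_0 = \setspec{x\in B}{\deg(x)\le 0}$ and $K_0 = \Frac(B_0)$; since $\Frac(B)$ is a one-dimensional function field over $K_0$ and $\deg$ has values in $\Nat$, $\deg$ essentially comes from a discrete valuation (or a sum of such) on that function field, and $\deg(D)$ being defined reduces to the classical fact that a derivation of a one-dimensional function field has bounded effect on a chosen valuation. Case \ref{p2934oq2jwdkj} is the easiest: if $\deg = \deg_\Delta$ for a locally nilpotent $\Delta$, the degree of \emph{any} $\bk$-derivation $D$ is controlled because $\deg_\Delta(Dx) - \deg_\Delta(x) \le$ a constant depending on $D$ and on generators of $\Frac(B)$ over $\bk$, using $\trdeg_\bk(B) < \infty$ and the structure of $\deg_\Delta$ (it is a "valuation-like" degree with a well-understood slice); one shows $\deg_\Delta(D) \le \max$ of finitely many quantities $\deg_\Delta(Dt_i) - \deg_\Delta(t_i)$ for a transcendence basis $t_1,\dots,t_m$ together with one extra generator.

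The main obstacle I anticipate is case \ref{sdp98123897jkd}: making rigorous the claim that a $\Nat$-valued degree function whose associated function-field extension is one-dimensional behaves like (a positive combination of) discrete valuations, and then importing the boundedness of derivations on such valuations — in particular dealing with the possibility that $\deg$ restricted to a transcendence basis behaves badly, and ensuring the "one-dimensional" hypothesis genuinely forces the differences $\deg(Dx)-\deg(x)$ to be bounded rather than merely "not unbounded along obvious sequences." The affine cases \ref{diur18723hj} and \ref{747438463746} are mostly bookkeeping with the Leibniz rule once the right finiteness statement about $\bk$-affine graded domains is isolated, and I would factor that finiteness out as a preliminary lemma in Section~\ref{Secd;lkfjpqw93ejdkla}.
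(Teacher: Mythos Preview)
Your plan for case~\eqref{diur18723hj} contains an error that, while it does not derail the final conclusion you state, signals a misunderstanding of the basic mechanism. You write that $\deg(Db)$ is bounded ``in terms of $\max_j(\deg(Dy_j)-\deg(y_j))$ times the `length' of the monomial,'' and then invoke a finiteness statement about monomial lengths. There is no length factor: for any product one has $\delta_D(x_1\cdots x_n)\le\max_i\delta_D(x_i)$ (Lemma~\ref{dklfja;skldjf}(1)), full stop. The length of the monomial is irrelevant, and your ``key finiteness'' about bounded monomial length in a given degree is neither needed nor true in general (some $y_j$ could have degree~$0$). The paper's proof of \eqref{diur18723hj} (via \ref{dif9w8er093409u}) uses exactly the bound $\delta_D(\text{monomial})\le\max_j\delta_D(y_j)$, together with Lemma~\ref{dklfja;skldjf}(2) to handle sums of homogeneous elements of equal degree; once you drop the spurious length factor, your argument becomes the paper's.

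The real gap is case~\eqref{747438463746}. You correctly observe that the well-ordering hypothesis is on the value set $\setspec{\deg(x)}{x\neq 0}\subseteq G$, not on the set $U$ of differences, so your ``strictly increasing sequence in $U$'' argument has no traction: $U$ need not be well-ordered. What you are missing is the approximation device the paper builds in \ref{sdp091231u23ikjd}--\ref{d9f8q9weij8932}: from $\Gr(B)=\bk[\gr(x_1),\dots,\gr(x_n)]$ one manufactures a subset $E\subseteq\bk[x_1,\dots,x_n]\subseteq B$ with two properties: (i) every nonzero $x\in B$ admits $e\in E$ with $\deg(x-e)<\deg(x)$, and (ii) $\delta_D(E)=\max_i\delta_D(x_i)=:M$. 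With this in hand the well-ordering is used on the \emph{value set}, not on $U$: if some $x$ has $\delta_D(x)>M$, take one of minimal $\deg(x)$; approximate by $e\in E$; then $x-e$ has strictly smaller degree, hence $\delta_D(x-e)\le M$, and a short computation with $\deg(Dx-De)$ yields $\delta_D(x)<M$, contradiction (see \ref{NEW_dofiq-290348rakj}). The construction of $E$ (the ``subpair'' iteration $(A,\bar A)\mapsto(A,\bar A)_x$) is the substantive idea, and nothing in your plan suggests it.

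For \eqref{sdp98123897jkd} your instinct that $\deg$ comes from a discrete valuation is exactly right, and the paper's route is the one you gesture at: extend $\deg$ to $\Frac(B)$, observe it gives a valuation over $\Frac(Z)$ on a one-dimensional function field, hence rank-one discrete; this forces $\Gr(S^{-1}B)$ (with $S=Z\setminus\{0\}$) to be finitely generated over $\Frac(Z)$, and then one applies the machinery of case~\eqref{747438463746} to $S^{-1}B$ and descends via the localization lemma~\ref{difuq90weukj}. So \eqref{sdp98123897jkd} is not harder than \eqref{747438463746} once that case is done; the obstacle you anticipate dissolves. Likewise \eqref{p2934oq2jwdkj} reduces, after localizing at a power of $\Delta(t)$ for a local slice $t$, to the graded case on a polynomial ring $(S^{-1}A)[t]$ (see \ref{doifq90348rqiojkdf}); your transcendence-basis idea is in the right spirit but the slice localization is cleaner.
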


Here, $\Frac(B)$ denotes the field of fractions of $B$ and
``$\bk$-affine'' means ``finitely generated as a $\bk$-algebra''.
Assertions \eqref{diur18723hj}, \eqref{747438463746},
\eqref{sdp98123897jkd} and \eqref{p2934oq2jwdkj}
of \ref{dfj;askdjflaksj} follow from \ref{dif9w8er093409u}, 
\ref{;sdlkuf1238whd8787sd676}, \ref{d9320495823erfrj}
and \ref{doifq90348rqiojkdf}, respectively
(also note that \eqref{p2934oq2jwdkj} is a special case of
\eqref{sdp98123897jkd}).

Assertions \eqref{747438463746} and \eqref{sdp98123897jkd}
of \ref{dfj;askdjflaksj} appear to be new.
The case $G=\Integ$ of \ref{dfj;askdjflaksj}\eqref{diur18723hj}
is well known, and since the general case has the same proof we
assume that it is also known.
Assertion \ref{dfj;askdjflaksj}\eqref{p2934oq2jwdkj} appeared in
\cite[Thm 2.11, p.\ 40]{Freud:Book}, with the mention that it was unpublished
work of this author.
The material in \ref{sdp091231u23ikjd}--\ref{d9320495823erfrj}
appears to be new.
The results given in \ref{setup}--\ref{difu;awesja;lk;lj} are 
generalizations and strengthenings of known results.

\bigskip
Let us also mention that
most of the errors that we pointed out in the discussion
between \ref{DefTame} and \ref{dfkljasdklfja}
can be fixed by using the above Theorem~\ref{dfj;askdjflaksj}
in conjunction with the following observation
(\ref{dfjqio23ejqlkmskkkd} is an immediate consequence of
\ref{difuq90weukj}, below):

\begin{lemma}  \label{dfjqio23ejqlkmskkkd}
Let $B$ be an integral domain
containing a field $\bk$ of characteristic zero,
$S \subset B$ a multiplicative set,
$\DEG: S^{-1}B \to G \cup \{-\infty\}$
a degree function (where $G$ is a totally ordered abelian group)
and $\deg : B \to G \cup \{ -\infty\}$ the restriction of  $\DEG$.
If $\DEG$ is tame over $\bk$ then so is $\deg$.
\end{lemma}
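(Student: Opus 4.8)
The plan is to unwind the definitions and reduce to a statement about how derivations interact with the localization $S^{-1}B$. First I would recall the setup: $\deg$ is the restriction of $\DEG$ to $B$, so for every nonzero $x \in B$ we have $\deg(x) = \DEG(x)$. Given an arbitrary $\bk$-derivation $D : B \to B$, I want to produce a finite upper bound for the set $U = \setspec{\deg(Dx) - \deg(x)}{x \in B \setminus \{0\}}$ — or, phrased in the language of the paper, to show that $\deg(D)$ is defined.

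The key step is to extend $D$ to a $\bk$-derivation $\tilde D : S^{-1}B \to S^{-1}B$. Since $B$ is an integral domain of characteristic zero and $S \subset B$ is multiplicative, the universal property of localization gives a unique such extension, determined by the quotient rule $\tilde D(b/s) = (s\,Db - b\,Ds)/s^2$. This $\tilde D$ restricts to $D$ on $B$. Now by hypothesis $\DEG$ is tame over $\bk$, so $\DEG(\tilde D)$ is defined; call it $g \in G \cup \{-\infty\}$. By definition of $\DEG(\tilde D)$, for every nonzero $u \in S^{-1}B$ we have $\DEG(\tilde D u) - \DEG(u) \le g$. In particular this holds for every nonzero $u \in B \subseteq S^{-1}B$, where $\tilde D u = D u$ and $\DEG$ agrees with $\deg$. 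Hence $\deg(Du) - \deg(u) \le g$ for all nonzero $u \in B$, so $U$ is bounded above by $g$; since $g \in G \cup \{-\infty\}$, the set $U$ has a supremum that is at most $g$. To conclude that $\deg(D)$ is actually \emph{defined}, i.e.\ that $U$ has a greatest element and not merely an upper bound, I would note that $g = \DEG(\tilde D u_0) - \DEG(u_0)$ is attained at some nonzero $u_0 \in S^{-1}B$ (this is what ``$\DEG(\tilde D)$ is defined'' means), and then clear denominators: writing $u_0 = b_0/s_0$ with $b_0 \in B$, $s_0 \in S$, and using $\tilde D(b_0) = \tilde D(s_0 u_0) = s_0 \tilde D(u_0) + u_0 \tilde D(s_0)$ together with properties (2)--(3) of a degree function, one checks that $\deg(D b_0) - \deg(b_0) = g$, so the maximum is realized inside $B$.

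The main obstacle — and it is a mild one — is the last point: ensuring the supremum is attained in $B$ rather than only approached. The inequality $\deg(D)\le \DEG(\tilde D)$ is immediate from restriction, but a priori the witness achieving the maximum for $\DEG$ might live in $S^{-1}B \setminus B$. The denominator-clearing argument handles this, but one must be slightly careful: from $b_0 = s_0 u_0$ one gets $\deg(Db_0) = \deg(s_0 \tilde D u_0 + u_0 \tilde D s_0)$, and since $\deg(s_0 \tilde D u_0) = \deg(s_0) + g + \deg(u_0)$ while $\deg(u_0 \tilde D s_0) = \deg(u_0) + \deg(\tilde D s_0) \le \deg(u_0) + g + \deg(s_0)$ (the last step because $g$ bounds $\DEG(\tilde D)$ applied to $s_0 \in B$), the term $s_0 \tilde D u_0$ has strictly larger degree unless there is a tie — and in case of a tie the $\max$ is still achieved, so in all cases $\deg(Db_0) \le \deg(b_0) + g$ with equality forced by $\deg(Db_0)\le \deg(b_0)+g$ combined with $\deg(Db_0)\ge$ (contribution of the dominant term). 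Alternatively, and more cleanly, I would simply invoke \ref{difuq90weukj}, which the excerpt tells us has \ref{dfjqio23ejqlkmskkkd} as an immediate consequence; in that case the proof reduces to citing that more general result and specializing. I expect the written proof to take the second route, with the localization-extension argument above as the conceptual content.
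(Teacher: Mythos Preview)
Your identification of the paper's route is exactly right: the paper simply notes that \ref{dfjqio23ejqlkmskkkd} is an immediate consequence of \ref{difuq90weukj}, and that lemma does all the work (extend $D$ to $S^{-1}D$, use tameness of $\DEG$ to get $\DEG(S^{-1}D)$ defined, then transfer back to $\deg(D)$).

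Your direct denominator-clearing argument, however, has a gap precisely at the point you flag as ``mild.'' From $b_0 = s_0 u_0$ you get $Db_0 = s_0\,\tilde D u_0 + u_0\,D s_0$, and you correctly compute $\deg(s_0\,\tilde D u_0) = g + \deg b_0$ while $\deg(u_0\,D s_0) \le g + \deg b_0$. But in the tie case the two summands can cancel, so $\deg(Db_0)$ may be strictly less than $g + \deg b_0$; your sentence ``in case of a tie the max is still achieved'' is false as stated (property~(3) of a degree function is only an inequality). The fix is easy once you see it: if the tie occurs, then $\deg(D s_0) = g + \deg s_0$, i.e.\ $\delta_D(s_0) = g$, so $s_0 \in B$ itself witnesses the maximum. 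Thus in every case either $b_0$ or $s_0$ realizes $g$ inside $B$.

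The paper's proof of \ref{difuq90weukj} sidesteps this case analysis altogether: rather than chasing a single witness, it shows the uniform bound $\delta_{S^{-1}D}(x/s) \le \max(\delta_D(x), \delta_D(s))$ for all $x \in B$, $s \in S$ (using $\delta_{S^{-1}D}(1/s) = \delta_D(s)$ and submultiplicativity of $\delta$). Combined with $U \subseteq U'$, this gives that $U$ and $U'$ have the same upper bounds and the same greatest element, with no cancellation issue to worry about.
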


\medskip
\begin{parag} \label{dfjapisdjfa;kj}
{\bf Some notations and terminologies.}
By a ``domain'', we mean an integral domain. 
If $A$ is a domain then $\Frac A$ denotes its field
of fractions.
If $A\subseteq B$ are domains then $\trdeg_A(B)$ denotes
the transcendence degree of $\Frac(B)$ over $\Frac(A)$.
The symbol $A^*$ denotes the set of units of a ring $A$.
A polynomial ring in $n$ variables over $A$ is denoted $A^{[n]}$.
A subring $A$ of a domain $B$ is said to be
{\it factorially closed in $B$} if the conditions $x,y \in B$
and $xy \in A \setminus \{0\}$ imply that $x,y \in A$.

If $A \subseteq B$ are rings then $\Der(B)$ (resp.\ $\Der_A(B)$)
is the set of derivations (resp.\  $A$-derivations) $D: B \to B$.

\medskip
Let $B$ be a domain and $G$ a totally ordered abelian group.
Then each $G$-grading $\ggoth$ of $B$ determines a degree function
$\deg_\ggoth : B \to G \cup \{ -\infty \}$ as follows. 
Let $B = \oplus_{i \in G} B_i$ be the grading $\ggoth$.
Given $x \in B$,
write $x = \sum_{i \in G} x_i$ ($x_i \in B_i$) and consider the
finite set $S_x = \setspec{ i \in G }{ x_i \neq 0 }$;
%then define $\deg_\ggoth(x) = \sup S_x$ ($=-\infty$ if $x=0$).
then define $\deg_\ggoth(x)$ to be the greatest element of 
$S_x \cup \{ -\infty \}$.
This is what we mean by a degree function ``determined by a grading''.

\medskip
Let $B$ be a domain and $\deg : B \to G \cup \{ -\infty\}$ a degree function,
where $G$ is a totally ordered abelian group.
For each $i \in G$, let
$$
B_i = \setspec{ x \in B }{ \deg(x) \le i }, \quad
B_{i^-} = \setspec{ x \in B }{ \deg(x) < i }, \quad
B_{[i]} = B_i / B_{i^-}\, .
$$
The direct sum $\Gr(B) = \bigoplus_{i \in G} B_{[i]}$
is a $G$-graded integral domain
referred to as the \textit{associated graded ring};
it is determined by $(B, \deg)$.
Note that $\Gr(B)$ comes equipped with the degree function
$\deg_\ggoth : \Gr(B) \to  G \cup \{ -\infty\}$ where $\ggoth$ denotes
the grading $\Gr(B) = \bigoplus_{i \in G} B_{[i]}$.
One also defines a set map $\gr : B \to \Gr(B)$ by $\gr(0)=0$ and,
for $x \in B \setminus \{0\}$,
$\gr(x) = x+B_{i^-} \in B_{[i]} \setminus\{0\}$, where $i = \deg(x)$.
The map $\gr$ preserves multiplication but, in general, not addition.
For all $x \in B$, $\gr(x)$ is a homogeneous element of $\Gr(B)$ and
$\deg(x) = \deg_\ggoth \big( \gr(x) \big)$.

As mentioned in the introduction,
each derivation $D : B \to B$ such that $\deg(D)$ is defined
gives rise to a homogeneous derivation $\gr(D) : \Gr(B) \to \Gr(B)$;
although this is not needed in the present paper, let us recall the definition.
Let $d=\deg(D)$.  If $d=-\infty$, set $\gr(D)=0$.
If $d \neq -\infty$ then $d \in G$ and (for each $i \in G$)
$D$ maps $B_i$ into $B_{(i+d)}$ and $B_{i^-}$ into $B_{(i+d)^-}$,
and so a map $D_{[i]} : B_{[i]} \to B_{[i+d]}$ is defined by
$x+ B_{i^-} \mapsto D(x) + B_{(i+d)^-}$;
then, given an element $y = \sum_{i \in G} y_i$ of $\Gr(B)$
(with $y_i \in B_{[i]}$),
define $\gr(D)( y ) = \sum_i D_{[i]} (y_i)$.
If $D$ is nonzero then so is $\gr(D)$, 
and if $D$ is locally nilpotent then so is $\gr(D)$.
\end{parag}

\section{Proof of \ref{dfkljasdklfja}}
\label{Sectiondfpq9wekzjkjjdf'al}

Let $\bk$ be a field of characteristic zero.

\begin{parag}  \label{dkfjapoisduf}
Consider the field $\bk((t))$ of Laurent power series over $\bk$
and the order valuation $\ord : \bk((t)) \to \Integ \cup \{ + \infty \}$.
Define 
\begin{equation}  \label{diufqiwejka3999}
\deg : \bk((t)) \to \Integ \cup \{ -\infty \}, \quad
\deg(f) = -\ord(f) \text{\ for all $f \in \bk((t))$}.
\end{equation}
Then $\deg$ is a degree function on $\bk((t))$ and it is easily verified that
the associated graded ring $\Gr \bk((t))$ is isomorphic to $\bk[ t, t^{-1}]$.
\end{parag}

\begin{parag}  \label{dfkja;sdjf;a}
Note that if $B$ is any ring such that $\bk \subseteq B \subseteq \bk((t))$
then the restriction
$$
\deg : B \to \Integ \cup \{ -\infty \}
$$
of the degree function~\eqref{diufqiwejka3999} is a degree function on $B$
satisfying $\deg(\lambda)=0$ for all $\lambda \in \bk^*$.
Also, there is an injective $\bk$-homomorphism
$\Gr B \hookrightarrow \Gr\bk((t))$.
As any ring $A$ satisfying $\bk \subseteq A \subseteq  \bk[t,t^{-1}]$
is $\bk$-affine, we see that $\Gr(B)$ is $\bk$-affine.
\end{parag}

\medskip
\begin{proof}[Proof of \ref{dfkljasdklfja}]
One can show that there exists $f(t) \in \bk((t))$ such that 
$( t, f(t), f'(t) )$ are algebraically independent over $\bk$
and $\ord f(t) \ge 0$.  Choose such an 
$f(t) = \sum_{j=0}^\infty a_j t^j$; 
let $x = t^{-1}$
and $y = f(t)$ and consider the subalgebra $B = \bk[x,y]$ of $\bk((t))$.
Note that $B = \kk2$.
Define $\deg : B \to \Integ \cup \{ -\infty \}$ as in \ref{dfkja;sdjf;a}
and note (as in \ref{dfkja;sdjf;a}) that $\Gr(B)$ is $\bk$-affine and
that $\deg(\lambda)=0$ for all $\lambda \in \bk^*$.
Note that $\deg( y - a_0 )$ is a negative integer;
as $\deg(x)=1$, it follows that
$\setspec{ \deg(h) }{ h \in B \setminus \{0\} } = \Integ$.
From this, it is easy to deduce that the natural embedding of $\Gr(B)$
into $\Gr\bk((t)) \isom \bk[t,t^{-1}]$ is actually an isomorphism:
$$
\Gr(B) \isom \bk[t,t^{-1}] .
$$
For each $n \ge 1$,
% \begin{multline*}
$$
x^ny = t^{-n} f(t) = \sum_{j=0}^\infty a_j t^{j-n}
= \sum_{j=0}^{n-1} a_j t^{j-n} + \sum_{j=n}^\infty a_j t^{j-n}
= \sum_{j=0}^{n-1} a_j x^{n-j} + \sum_{j=n}^\infty a_j t^{j-n}
$$
% \end{multline*}
so if we define $g_n \in B$ by $g_n = x^ny - \sum_{j=0}^{n-1} a_j x^{n-j}$,
% = x^ny - \sum_{k=1}^{n} a_{n-k} x^{k}$
then $g_n =   \sum_{j=n}^\infty a_j t^{j-n}$, so
$$
\text{ the set $S = \setspec{ n }{ \deg( g_n ) = 0 }$ is infinite. }
$$
We have $\frac{\partial g_n}{\partial y} = x^n$ and
\begin{multline*}
\frac{\partial g_n}{\partial x}
= nx^{n-1}y - \sum_{j=0}^{n-1} (n-j)a_j x^{n-j-1}
= nt^{1-n} \sum_{j=0}^\infty a_j t^j - \sum_{j=0}^{n-1} (n-j)a_j t^{j-n+1} \\
= \sum_{j=0}^{n-1} j a_j t^{j-n+1} + \sum_{j=n}^{\infty} n a_{j} t^{j-n+1} 
= t^{2-n} \left[
\sum_{j=0}^{n-1} j a_j t^{j-1} + \sum_{j=n}^{\infty} n a_{j} t^{j-1} \right]
= t^{2-n} \big[ \alpha_n + \epsilon_n \big]
\end{multline*}
with $\alpha_n = \sum_{j=0}^{n-1} j a_j t^{j-1}$
and $\epsilon_n =  \sum_{j=n}^{\infty} n a_{j} t^{j-1}$.
Notice that $\{  \alpha_n + \epsilon_n  \}_{ n =1 }^\infty$
is a sequence in $\bk((t))$ which converges to $f'(t)$ with respect
to the $(t)$-adic topology.

Consider the $\bk$-derivation
$D = u \frac{\partial }{\partial x} - v \frac{\partial }{\partial y} :
B \to B$, where $u,v \in B$, and assume that $\deg(D)$ is defined.
Then there exists $d \in \Integ$ satisfying
$\deg( D(g_n) ) - \deg( g_n )  \le d$ for all $n \ge 1$,
so in particular
\begin{equation}  \label{dkfjpaiosdj}
\ord \big( t^n D( g_n )  \big) \ge n -d \quad \text{for all $n \in S$}.
\end{equation}
On the other hand we have
\begin{equation}  \label{Osdpoiasjd}
t^n D( g_n )
= t^n \big( u \frac{\partial g_n }{\partial x}
- v \frac{\partial g_n}{\partial y} \big)
= t^{2} \big[ \alpha_n + \epsilon_n \big] u - v
= \big[ \alpha_n + \epsilon_n \big] x^{-2} u - v .
\end{equation}
The right hand side of \eqref{Osdpoiasjd} is a convergent sequence
in $\bk((t))$, with limit $f'(t) x^{-2} u - v$;
so the sequence $\big\{ t^n D( g_n ) \big\}_{ n=1 }^\infty$
is convergent and, by \eqref{dkfjpaiosdj}, must converge
to $0$; so 
\begin{equation}  \label{kldfj;aksdjf;a}
f'(t) x^{-2} u - v = 0.
\end{equation}
If $u\neq 0$ then \eqref{kldfj;aksdjf;a} implies
$f'(t) = x^2 v/u \in \bk(x,y) = \bk(t,f(t))$,
which contradicts our choice of $f(t)$.
So $u=0$ and, by \eqref{kldfj;aksdjf;a}, $v=0$. So $D=0$.
\end{proof}

\section{Wild degree functions with values in $\Nat$}
\label{fqwekjaklsdmfklasmdlf}

\begin{notations}
For each finite subset $S = \{ u_1, \dots, u_n \}$ of a ring $A$,
define $\mu(S) = \prod_{i=1}^n u_i \in A$
(where $\mu(\emptyset) = 1$ by convention).
If $E$ is a set, $\pfin(E)$ denotes the set of finite subsets of $E$
and $\pfino(E)$ is the set of nonempty finite subsets of $E$.
\end{notations}

\begin{lemma}  \label{dpfiuqpwejdakl}
Let $( a_i )_{i \in \Nat}$ be a sequence of elements of a ring $A$.
Define a sequence $( F_i )_{i \in \Nat}$ in $A[Y] = A^{[1]}$ by
$F_0 = Y$ and, for each $i \in \Nat$, $F_{i+1} = F_i^2 - a_i$.
Then each nonzero element of $A[Y]$ has a unique expression as a finite sum
$$
\alpha_1 \mu(S_1) + \cdots + \alpha_N \mu(S_N),
$$
where $N\ge1$, $\alpha_i \in A \setminus \{0\}$ and $S_1, \dots, S_N$
are distinct finite subsets of $\setspec{ F_i }{ i \in \Nat }$.
\end{lemma}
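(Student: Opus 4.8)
The plan is to prove the stronger statement that the family
\[
\bigl\{\, P_T \;:\; T \in \pfin(\Nat)\,\bigr\},
\qquad\text{where } P_T := \prod_{i\in T}F_i = \mu\bigl(\{F_i : i\in T\}\bigr),
\]
is a free $A$-module basis of $A[Y]$. The assertion of the lemma is then a mere restatement of this, once one knows that $i\mapsto F_i$ is injective, so that the finite subsets $S$ of $\{F_i : i\in\Nat\}$ correspond bijectively with the sets $T\in\pfin(\Nat)$ via $S = \{F_i : i\in T\}$. If $A$ is the zero ring the lemma is vacuous, so I assume $A\neq 0$.

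First I would check, by induction on $i$, that $F_i$ is monic in $Y$ of degree $2^i$: this holds for $F_0 = Y$, and if it holds for $F_i$ then $F_i^2$ is monic of degree $2^{i+1}$, hence so is $F_{i+1} = F_i^2 - a_i$. In particular the $F_i$ have pairwise distinct $Y$-degrees, so $i\mapsto F_i$ is injective, and for $T\in\pfin(\Nat)$ the product $P_T$ is monic in $Y$ of degree $\sum_{i\in T}2^i$ (with $P_\emptyset = 1$ of degree $0$). Since every natural number has a unique binary expansion, $T\mapsto\sum_{i\in T}2^i$ is a bijection $\pfin(\Nat)\to\Nat$; hence, for each $n\in\Nat$, the family $\{P_T\}$ contains exactly one polynomial of $Y$-degree $n$, and that polynomial is monic.

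The heart of the matter is then the classical triangularity fact: a set of polynomials in $A[Y]$ containing exactly one monic polynomial of each degree $0,1,2,\dots$ is a basis of $A[Y]$ as a free $A$-module. For the spanning property one shows $Y^n\in\sum_T A\,P_T$ by induction on $n$: it is clear for $n=0$ (as $P_\emptyset = 1$), and for $n\geq 1$, if $P_T$ is the member of degree $n$ then $Y^n = P_T - g$ with $g\in A[Y]$ of $Y$-degree $<n$, and $g$ is an $A$-combination of $1,Y,\dots,Y^{n-1}$, hence of the $P_{T'}$ by the induction hypothesis. For linear independence, in a finite relation $\sum_T c_T P_T = 0$ with $c_T\in A$, if some $c_T\neq 0$ one picks one for which $P_T$ has maximal degree, say $M$; then the coefficient of $Y^M$ on the left-hand side equals $c_T$ (all other nonzero terms have $Y$-degree $<M$), so $c_T = 0$, a contradiction.

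It remains only to translate this back. Given a nonzero $f\in A[Y]$, expressing it in the basis $\{P_T\}$ and discarding the zero coefficients yields $f = \alpha_1\mu(S_1) + \cdots + \alpha_N\mu(S_N)$ with $N\geq 1$, $\alpha_j\in A\setminus\{0\}$, and $S_j = \{F_i : i\in T_j\}$ distinct finite subsets of $\{F_i : i\in\Nat\}$ (distinct because the $T_j$ are distinct). Conversely, any expression of $f$ of the shape required by the lemma is, through the injectivity of $i\mapsto F_i$ and the binary-expansion bijection, literally a representation of $f$ in the basis $\{P_T\}$, so the uniqueness of coordinates relative to a basis yields the uniqueness clause. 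I do not expect a genuine obstacle here; the only point that needs care is the three-level bookkeeping relating finite subsets of $\Nat$, finite subsets of $\{F_i : i\in\Nat\}$, and natural numbers through their binary expansions.
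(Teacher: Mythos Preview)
Your proof is correct and follows essentially the same approach as the paper: both observe that $F_i$ is monic of degree $2^i$, that $S\mapsto\deg(\mu(S))$ is therefore a bijection from finite subsets to $\Nat$ via binary expansion, and that the lemma follows by triangularity. The paper compresses this into two sentences, while you spell out the basis argument and the bookkeeping between subsets of $\Nat$ and subsets of $\{F_i\}$ more carefully, but the substance is the same.
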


\begin{proof}
As $F_i$ is monic of degree $2^i$, we see that $\mu(S)$ is monic
for each finite subset $S$ of $\setspec{ F_i }{ i \in \Nat }$,
and $S \mapsto \deg( \mu(S) )$ is a bijection from the set
of finite subsets of $\setspec{ F_i }{ i \in \Nat }$ to $\Nat$.
The Lemma follows from this.
\end{proof}

\begin{lemma} \label{difuqpweijddkfkdkd}
Let $L/K$ be an extension of fields of characteristic $\neq2$
and $\Ueul$ a subset of $L$ satisfying:
\begin{enumerate}

\item[(i)] $u^2 \in K$ for all $u \in \Ueul$

\item[(ii)] $\mu(F) \notin K$ for all $F \in \pfino\Ueul$.

\end{enumerate}
Then the family $\big( \mu(F) \big)_{ F \in \pfin( \Ueul ) }$
of elements of $L$ is linearly independent over $K$.
\end{lemma}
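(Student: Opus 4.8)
The plan is to prove this by induction on the number of distinct elements of $\Ueul$ appearing among the monomials $\mu(F)$ involved in a hypothetical nontrivial linear dependence. So suppose, for contradiction, that there is a finite nonempty collection $F_1, \dots, F_N$ of distinct subsets of $\Ueul$ and nonzero scalars $c_1, \dots, c_N \in K$ with $\sum_{j=1}^N c_j \mu(F_j) = 0$; choose such a relation with the total number $n$ of elements in $\bigcup_j F_j$ minimal. If $n = 0$ then every $F_j$ is empty, so $N = 1$ and $c_1 = 0$, a contradiction; hence $n \ge 1$. Pick $u \in \bigcup_j F_j$ and split the index set into $J_1 = \setspec{ j }{ u \in F_j }$ and $J_0 = \setspec{ j }{ u \notin F_j }$. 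Writing $\mu(F_j) = u \cdot \mu(F_j \setminus \{u\})$ for $j \in J_1$, the relation becomes $P + uQ = 0$, where $P = \sum_{j \in J_0} c_j \mu(F_j)$ and $Q = \sum_{j \in J_1} c_j \mu(F_j \setminus \{u\})$ are $K$-linear combinations of monomials $\mu(F)$ with $F$ ranging over subsets of $\Ueul$ that do not contain $u$.

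Next I would exploit hypothesis~(i): since $u^2 \in K$, the subfield $K(u) \subseteq L$ is either equal to $K$ or a degree-$2$ extension of $K$, and in the latter case $\{1, u\}$ is a $K$-basis of $K(u)$. In the degree-$2$ case, $P + uQ = 0$ with $P, Q \in K$ would force $P = Q = 0$; but more carefully $P$ and $Q$ are not scalars, they lie in $L$. The key observation is that $P$ and $Q$ are $K$-linear combinations of the $\mu(F)$ with $u \notin F$, and I want to conclude $P = 0$ and $Q = 0$ as formal relations. To get this, apply the $K$-automorphism $\sigma$ of $K(u)$ sending $u \mapsto -u$ (available when $[K(u):K] = 2$, i.e. $u \notin K$); since $\Char L \ne 2$ this is a genuine nontrivial automorphism, and $\mu(F)$ for $u \notin F$ need not be fixed by $\sigma$ — so this naive approach needs care. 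Instead, the clean route is: regard everything inside the ring $K[\Ueul \setminus \{u\}][u]$; since $u^2 \in K$, this ring is a quotient of $(K[\Ueul\setminus\{u\}])[T]/(T^2 - u^2)$, and the element $P + uQ$ maps to $0$. If $u \notin K$, I claim $1$ and $u$ are linearly independent over the fraction field of the domain generated by $\Ueul \setminus \{u\}$ together with $K$ — equivalently $u$ has degree $2$ over that field — which would give $P = Q = 0$.

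This claim is exactly where hypothesis~(ii) enters, and it is the main obstacle. I would prove it as follows: if $u$ had degree $1$ over the field $K' := \Frac\big(K[\Ueul \setminus \{u\}]\big)$, then $u \in K'$, so $u = \sum a_k \mu(G_k)$ for finitely many $K$-linear... — actually it is cleaner to argue that $u \in K'$ combined with $u^2 \in K$ would let me write $1 = u \cdot (u^{-1})$ with $u^{-1} = u/u^2 \in K'$, and then use \ref{dpfiuqpwejdakl}-type uniqueness of monomial expansions together with (ii) to derive a contradiction; alternatively, and I think most transparently, I would set up the whole argument as a single induction using \ref{dpfiuqpwejdakl} with the base ring $A = K[\,\Ueul\setminus\{u\}\,]$ adjoined to a ghost variable, matching monomials $\mu(F)$ with subsets of a suitable set of $F_i$'s. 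Either way, once $P = 0$ and $Q = 0$ are established, each is a shorter nontrivial $K$-linear relation among $\mu(F)$'s using strictly fewer than $n$ elements of $\Ueul$ (all lying in $\Ueul \setminus \{u\}$) — unless it is the empty relation; but $J_1 \ne \emptyset$ since $u \in \bigcup_j F_j$, so $Q$ involves at least one $c_j \ne 0$ and is a genuine nontrivial relation, and likewise a careful bookkeeping shows $P$ cannot absorb the contradiction. This contradicts minimality of $n$, completing the induction. The delicate point to get right is ensuring that after the split at least one of $P, Q$ is a nontrivial relation in fewer variables and that hypothesis~(ii) is precisely what rules out the degenerate case $u \in K'$; everything else is formal manipulation of monomial expansions.
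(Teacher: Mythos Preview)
Your overall plan matches the paper's second step: take a minimal dependence relation, choose an element $u$, split the relation as $P + uQ = 0$ with $P,Q$ supported on subsets not containing $u$, and look for a contradiction. You also correctly isolate the crux: to pull $P$ and $Q$ apart one must know that $u$ does not lie in the $K$-subalgebra of $L$ generated by the remaining elements of $\Ueul$.

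That step, however, is left unproved, and your two suggested routes do not work. Invoking Lemma~\ref{dpfiuqpwejdakl} is a red herring: that lemma is about the polynomial ring $A[Y]$ and a very particular recursive sequence $F_{i+1}=F_i^2-a_i$; it gives no information about algebraic relations among the $\mu(F)$ inside $L$. The other sketch (``$u\in K'$ and $u^2\in K$ give $u^{-1}=u/u^2\in K'$, then use uniqueness together with~(ii)'') goes nowhere: once $u\in K'$ the fact $u^{-1}\in K'$ is automatic, and there is no uniqueness-of-expansion statement available in $K'$ that would let hypothesis~(ii) bite. So the assertion ``$u\notin K'$'' is the entire difficulty, and it remains open in your write-up. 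Without it, $P+uQ=0$ is just the original relation rewritten, and nothing forces $Q=0$.

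The paper fills this gap with a separate preliminary claim (``$\Sigma=\emptyset$''): for every nonempty finite $F\subseteq\Ueul$ and every finite $G\subseteq\Ueul$ disjoint from $F$, one has $\mu(F)\notin K[G]$. This is proved by its own minimality argument on $|G|$: pick $g\in G$, write $\mu(F)=a+bg$ with $a,b\in K[G\setminus\{g\}]$ (possible because $g^2\in K$), square and use $\mu(F)^2\in K$ together with $\Char K\neq2$ to get $abg\in K[G\setminus\{g\}]$; a three-case analysis ($ab\neq0$, $a=0$, $b=0$) then produces a pair with strictly smaller $|G|$. A point implicit here, and also needed in your argument when you ``divide by $Q$'', is that $K[G\setminus\{g\}]$ is a finite-dimensional $K$-subalgebra of the field $L$ and hence is itself a field. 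Once $\Sigma=\emptyset$ is established, the main step is exactly your split: if the coefficient of $u$ were nonzero in $L$ one could solve for $u$ and land in $K[S\setminus\{u\}]$, contradicting $\Sigma=\emptyset$.

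In short, your outline is correct but incomplete: you still owe an independent proof that $u\notin K[G]$ for finite $G\subseteq\Ueul\setminus\{u\}$, and that proof is where hypotheses~(i), (ii) and $\Char K\neq2$ are actually used.
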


\begin{proof}
This is certainly well-known but, in lack of a suitable reference,
we provide a proof.
We imitate the proof that if $p_1, \dots, p_n$ are distinct prime numbers
then $[\Rat(\sqrt{p_1}, \dots, \sqrt{p_n}) : \Rat] = 2^n$, see for
instance \cite{Roth}.
% The first step is to prove the implication
% \begin{equation}
% F, G \in \pfin(\Ueul),\  F \neq \emptyset,\  F \cap G = \emptyset\ \ 
% \implies\ \  \mu(F) \notin K[G] .
% \end{equation}
The first step is to prove that the set
$$
\Sigma = \setspec{ (F, G) \in \pfin(\Ueul)^2 }
{  F \neq \emptyset,\  F \cap G = \emptyset \text{ and } \mu(F) \in K[G] }
$$
is empty. 
Suppose the contrary, and choose $(F,G) \in \Sigma$ which minimizes $|G|$.
Note that $G \neq \emptyset$ by (ii);
pick $g \in G$ and let $G' = G \setminus \{g\}$.
By minimality of $|G|$,
\begin{equation}  \label{dkfjaqweijkasdkk}
(F', G') \notin \Sigma \quad \text{for all $F' \in \pfin(\Ueul)$}.
\end{equation}
Since $\mu(F) \in K[G] = K[G'][g]$ and $g^2 \in K$ by (i),
we have $\mu(F) = a+bg$ for some $a,b \in K[G']$.
Using (i) again gives $K \ni \mu(F)^2 = a^2+2abg+b^2g^2$;
since $\Char K \neq2$, $abg \in K[G']$.
If $ab \neq 0$ then  $g \in K[G']$, 
so $( \{g\} , G' ) \in \Sigma$ contradicts \eqref{dkfjaqweijkasdkk}.
If $a=0$ then $\mu( F \cup \{g\} ) = \mu(F)g = bg^2 \in K[G']$,
so $( F \cup \{g\} , G' ) \in \Sigma$
contradicts \eqref{dkfjaqweijkasdkk}.
If $b=0$ then $\mu(F)=a \in K[G']$, 
so $( F , G' ) \in \Sigma$ contradicts \eqref{dkfjaqweijkasdkk}.
These contradictions show that $\Sigma=\emptyset$.

We now prove the assertion of the Lemma, by contradiction.
Suppose that $S_1, \dots, S_n$ are  distinct elements of $\pfin(\Ueul)$
such that $\mu(S_1), \dots, \mu(S_n)$ are linearly dependent  over $K$,
and suppose that $n$ is the least natural number for which such sets exist.
Observe that $n\ge2$ and hence $\bigcup_{i=1}^n S_i \neq \bigcap_{i=1}^n S_i$.
Pick $u \in \bigcup_{i=1}^n S_i \setminus \bigcap_{i=1}^n S_i$.
Relabel the sets $S_1, \dots, S_n$ so as to have
$u \in S_1 \cap \dots \cap S_m$ and $u \notin S_{m+1} \cup \dots \cup S_n$,
and note that $1 \le m \le n-1$.
Choose $a_1, \dots a_n \in K$ not all zero such that
$\sum_{i=1}^n a_i \mu(S_i) = 0$ and note that $a_1, \dots, a_n \in K^*$
by minimality of $n$. Let $S = \bigcup_{i=1}^n S_i$.
We have
$$
u \sum_{i=1}^m a_i \mu( S_i \setminus \{u\} )
= - \sum_{i=m+1}^n a_i \mu( S_i ) ,
$$
where the two sums belong to $K[S \setminus \{u\}]$
and where $\sum_{i=1}^m a_i \mu( S_i \setminus \{u\} ) \neq0$ by minimality
of $n$.
Thus $u \in K[S \setminus \{u\}]$ and consequently
$( \{u\}, S \setminus \{u\} ) \in \Sigma$, a contradiction.
\end{proof}

\begin{parag} \label{dfuia9wefklzd} \label{Dfiuapw9e89dfajk}
Consider the following conditions on a $4$-tuple
$(\bk_0, \bk_1, \bk_2, \Ueul)$:
\begin{itemize}

\item[(i)]
$\bk_0 \subset \bk_1 \subset \bk_2$ are fields of characteristic zero
and $\bk_2$ is algebraic over $\bk_1$

\item[(ii)] $\Ueul$ is an uncountable subset of $\bk_2$

\item[(iii)] $u^2 \in \bk_1$ for all $u \in \Ueul$

\item[(iv)] $\mu(S) \notin \bk_1$ for each $S \in \pfino( \Ueul )$

\item[(v)] some element of $\Ueul$ is transcendental over $\bk_0$.

\end{itemize}
Note that, by \ref{difuqpweijddkfkdkd}, any $4$-tuple
$(\bk_0, \bk_1, \bk_2, \Ueul)$ satisfying (i--v) also satisfies:
\begin{itemize}

\item[(vi)]
the family $\big( \mu(F) \big)_{ F \in \pfin( \Ueul ) }$
of elements of $\bk_2$ is linearly independent over $\bk_1$.

\end{itemize}
\end{parag}

\begin{lemma}   \label{difuqp9023klajsd;kl}
Let $\bk_0$ be an uncountable field of characteristic zero,
$\bk_1$ a function field over $\bk_0$
and $\bk_2$ the algebraic closure of $\bk_1$.
Then there exists a subset $\Ueul$ of $\bk_2$ such that 
$(\bk_0, \bk_1, \bk_2, \Ueul)$ satisfies the conditions of
\ref{dfuia9wefklzd}.
Moreover, if $A$ is a ring such that $\bk_0 \subseteq A \subseteq \bk_1$
and $\Frac(A)=\bk_1$, then $\Ueul$ can be chosen in such a way that 
$u^2 \in A$ for all $u \in \Ueul$.
\end{lemma}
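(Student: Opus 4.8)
The plan is to realize $\Ueul$ as a set of square roots $u=\sqrt{t}$ of carefully chosen elements $t\in A$. With this shape, conditions (iii) and the ``moreover'' clause are automatic, and since $\mu(\{u_1,\dots,u_n\})^2=t_1\cdots t_n$ whenever $u_i=\sqrt{t_i}$, the membership $\mu(\{u_1,\dots,u_n\})\in\bk_1$ would force $t_1\cdots t_n$ to be a square in $\bk_1$. So the whole problem reduces to producing an uncountable family of elements $t_c\in A$ whose classes $[t_c]$ in the $\mathbb{F}_2$-vector space $\bk_1^*/(\bk_1^*)^2$ are nonzero and linearly independent; then (iv) is exactly the statement that no nonempty product of distinct $t_c$'s is a square in $\bk_1$.

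To build such $t_c$'s, let $d=\trdeg_{\bk_0}(\bk_1)\ge1$ and choose a transcendence basis $x_1,\dots,x_d$ of $\bk_1/\bk_0$ with $\bk_1$ finite over $\bk_0(x_1,\dots,x_d)$ (possible as $\bk_1$ is finitely generated over $\bk_0$). Set $x=x_1$ and $K=\bk_0(x_2,\dots,x_d)$, so $\bk_0\subseteq K$, $x$ is transcendental over $K$, and $\bk_1/K(x)$ is a finite extension, hence separable since $\Char\bk_0=0$. For $c\in\bk_0$ let $v_c$ be the $(x-c)$-adic valuation of $K(x)$. The key step is: \emph{for all but finitely many $c\in\bk_0$, every extension of $v_c$ to a valuation $w$ of $\bk_1$ has ramification index $e(w\mid v_c)=1$.} This is the classical fact that a finite separable extension of a one-variable function field ramifies over only finitely many places; concretely, pick a primitive element $\alpha$ for $\bk_1/K(x)$ that is integral over $K[x]$, let $D(x)\in K[x]\setminus\{0\}$ be the discriminant of the basis $1,\alpha,\dots,\alpha^{n-1}$, and observe that for every $c$ with $D(c)\neq0$ the ring $K[x]_{(x-c)}[\alpha]$ is the integral closure of the DVR $K[x]_{(x-c)}$ in $\bk_1$ and is unramified over it. Let $C_0$ be $\bk_0$ minus the finite set of roots of $D$ in $\bk_0$; it is uncountable. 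For distinct $c_1,\dots,c_n\in C_0$, choosing $w$ on $\bk_1$ over $v_{c_1}$ gives $w\big(\prod_i(x-c_i)\big)=\sum_i e(w\mid v_{c_1})\,v_{c_1}(x-c_i)=1\cdot1+0+\dots+0=1$, which is odd, so $\prod_i(x-c_i)$ is not a square in $\bk_1$. Thus the classes $[x-c]$ for $c\in C_0$ are nonzero and $\mathbb{F}_2$-linearly independent in $\bk_1^*/(\bk_1^*)^2$.

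Now transfer into $A$: since $\Frac(A)=\bk_1$, write $x=p/q$ with $p,q\in A$, $q\neq0$, and for $c\in C_0$ put $t_c=q(p-cq)\in A$. Then $p-cq=q(x-c)\neq0$ and $t_c=q^2(x-c)$, so $[t_c]=[x-c]$ in $\bk_1^*/(\bk_1^*)^2$; also $c\mapsto t_c$ is injective because $t_c-t_{c'}=(c'-c)q^2$. Pick $u_c\in\bk_2$ with $u_c^2=t_c$ (possible as $\bk_2$ is algebraically closed) and set $\Ueul=\{u_c: c\in C_0\}$. Then $\Ueul$ is uncountable, giving (ii); $u_c^2=t_c\in A\subseteq\bk_1$ gives (iii) and the ``moreover'' clause; and (iv) follows from the square-class independence of the $t_c$ via the observation at the start. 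For (v): if $t_c$ and $t_{c'}$ were both algebraic over $\bk_0$ for distinct $c,c'\in\bk_0$, then $(c'-c)q^2=t_{c'}-t_c$ would make $q$, and hence $p=xq$, algebraic over $\bk_0$, contradicting transcendence of $x$ over $\bk_0$; so all but at most one $u_c=\sqrt{t_c}$ is transcendental over $\bk_0$, and we may discard that exceptional index. Condition (i) is automatic. (For the statement without the ``moreover'' part one may simply take $p=x$, $q=1$, i.e.\ $t_c=x-c$, for which (v) is immediate.)

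The main obstacle is the ramification step: one must ensure the ``finitely many ramified places'' input is valid over the base field $K$, which is uncountable, non-algebraically-closed and possibly of positive transcendence degree over $\bk_0$; phrasing it through the discriminant of a primitive element makes it elementary and insensitive to these features in characteristic zero. Everything else is routine bookkeeping with square classes.
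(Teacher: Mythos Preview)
Your proof is correct, but it takes a genuinely different route from the paper's.

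The paper works over the purely transcendental subfield $\bk=\bk_0(t_1,\dots,t_n)\subseteq\bk_1$ (with $t_i\in A$): the elements $t_1-\lambda$ are pairwise non-associate primes in the UFD $\bk_0[t_1,\dots,t_n]$, so no nonempty product of their square roots lies in $\bk$. Lemma~\ref{difuqpweijddkfkdkd} then gives $\bk$-linear independence of all finite products $\mu(F)$, and since $[\bk_1:\bk]<\infty$ only finitely many such products can land in $\bk_1$; deleting the finitely many offending $u$'s yields $\Ueul$. Condition~(v) and the ``moreover'' clause come for free because $u^2=t_1-\lambda\in A$ is visibly transcendental over $\bk_0$.

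Your argument instead proves non-squareness \emph{directly in $\bk_1$} via ramification: using that only finitely many places $(x-c)$ of $K(x)$ ramify in the finite separable extension $\bk_1$, any unramified extension $w$ of $v_{c_1}$ gives $w\big(\prod_i(x-c_i)\big)=1$, which is odd. This bypasses the two-step ``first over $\bk$, then trim'' procedure, at the cost of invoking the discriminant criterion for unramifiedness. Your handling of the ``moreover'' clause, replacing $x-c$ by $t_c=q^2(x-c)\in A$ to preserve the square class, is a nice touch that avoids having to place the transcendence basis inside $A$; the paper simply arranges $t_1\in A$ from the start. Both approaches are short; the paper's is more elementary (no valuation theory beyond UFDs), while yours is more direct.
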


\begin{proof}
Choose a transcendence basis
$\{ t_1, \dots, t_n \}$ of $\bk_1/\bk_0$ such that
$\{ t_1, \dots, t_n \} \subset A$, let $R = \bk_0[ t_1, \dots, t_n ]$ and
$\bk = \bk_0( t_1, \dots, t_n ) = \Frac R$.
As $\bk_1/\bk_0$ is a function field, we have $n \ge1$ and it makes sense
to define $\Peul = \setspec{ t_1 - \lambda }{ \lambda \in \bk_0 }$,
which is an uncountable set of prime elements of $R$ satisfying:
$$
\text{If $p,q$ are distinct elements of $\Peul$, then $p \nmid q$ in $R$.}
$$
Choose a subset $\Ueul_1$ of $\bk_2$ such that $x \mapsto x^2$ is a
bijection from $\Ueul_1$ to $\Peul$.  Then
\begin{itemize}

\item $u^2 \in \bk$ for all $u \in \Ueul_1$

\item $\mu(S) \notin \bk$ for each $S \in \pfino( \Ueul_1 )$.

\end{itemize}
By  \ref{difuqpweijddkfkdkd},
the family $\big( \mu(F) \big)_{ F \in \pfin( \Ueul_1 ) }$
of elements of $\bk_2$ is linearly independent over $\bk$;
as $[ \bk_1 : \bk ] < \infty$, it follows that 
$E = \setspec{ F \in  \pfin( \Ueul_1 ) }{ \mu( F ) \in \bk_1 }$
is a finite set.
Thus $C = \bigcup_{F \in E} F$ is a finite subset of $\Ueul_1$
and $\Ueul = \Ueul_1 \setminus C$ is uncountable.
It is easily verified that $(\bk_0, \bk_1, \bk_2, \Ueul)$ satisfies
the conditions of \ref{dfuia9wefklzd}.
Moreover, $u^2 \in A$ for all $u \in \Ueul$.
\end{proof}

\begin{parag} \label{ThisdkjfpqwekzdkdTeiwuo}
We now fix  $(\bk_0, \bk_1, \bk_2, \Ueul)$ satisfying the requirements
of \ref{dfuia9wefklzd}.
This is in effect throughout paragraph \ref{ThisdkjfpqwekzdkdTeiwuo}.
\end{parag}

\begin{subparag}
Let $X_0, X_1, X_2, \dots$ be a countably infinite list of indeterminates
over $\bk_1$.
For each $n \in \Nat$, let
$E_n = \setspec{ f \in \bk_1[X_0, \dots, X_n ] }{ \deg_{X_n} f = 1 }$.
Note that the sets $E_n$ are pairwise disjoint;
when $f \in E_n$, we write $\co(f) \in  \bk_1[X_0, \dots, X_{n-1} ] \setminus
\{0\}$ for the coefficient of $X_n$ in $f$.

For each $p \in \Nat$, let
$\Sigma_p$ be the set of series $\xi \in \bk_1(X_0,X_1, \dots)((t))$
of the form
\begin{equation} \label{d9fq0923wedkalj}
\xi = t^{-3} ( f_p + f_{p+1} t^3 +  f_{p+2} t^6 + \cdots )
= t^{-3} \sum_{n=0}^\infty f_{p+n}t^{3n},
\end{equation}
such that $f_i \in E_i$ for all $i \ge p$.
Given $\xi \in \Sigma_p$ with notation as in \eqref{d9fq0923wedkalj}, define 
\begin{multline*}
V( \xi ) =
\big\{ (a_0, \dots, a_{p}) \in \bk_2^{p+1} \, \mid \, 
 f_p(a_0, \dots, a_p) \neq 0  \\
\mbox{\ } \qquad\qquad\qquad \qquad   \text{\ and\ }  \forall_{i > p}\,  
\deg_{X_i}( f_i(a_0, \dots, a_{p}, X_{p+1}, \dots, X_i))=1\, \big\} .
\end{multline*}
\end{subparag}

\begin{sublemma}  \label{d9fr8-q09weida}
Let $p \in \Nat$ and 
$\xi = t^{-3} \sum_{n=0}^\infty f_{p+n}t^{3n} \in \Sigma_p$ 
(where $f_i \in E_i$ for all $i \ge p$). 
Let $\xi' = \xi^2 - f_p^2t^{-6} \in \bk_1(X_0, X_1, \dots )((t))$.
Then 
\begin{enumerata}

\item $\xi' \in \Sigma_{p+1}$

\item If $(a_0, \dots, a_p) \in V( \xi )$
then there is a countable subset $C$ of $\bk_2$ such that,
for all $a_{p+1} \in \bk_2 \setminus C$, 
$(a_0, \dots, a_{p+1}) \in V( \xi' )$.

\end{enumerata}
\end{sublemma}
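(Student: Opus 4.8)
The plan is to verify both assertions by direct computation with the power-series expansion of $\xi'$, using the recursion $\xi' = \xi^2 - f_p^2 t^{-6}$ to read off the coefficients $f'_{p+1}, f'_{p+2}, \dots$ of $\xi'$ in terms of the $f_i$, and then checking two things about each $f'_j$: that it lies in the correct set $E_j$, and that the nonvanishing/degree conditions defining $V(\xi')$ follow from those defining $V(\xi)$ after discarding a countable set of bad values of $a_{p+1}$. First I would write $\xi = t^{-3}\sum_{n\ge0} f_{p+n}t^{3n}$, square it, and observe that $\xi^2 = t^{-6}\sum_{m\ge0}\big(\sum_{j+k=m} f_{p+j}f_{p+k}\big)t^{3m}$, so subtracting $f_p^2 t^{-6}$ kills the $m=0$ term and leaves $\xi' = t^{-6}\sum_{m\ge1}\big(\sum_{j+k=m} f_{p+j}f_{p+k}\big)t^{3m} = t^{-3}\sum_{n\ge0} f'_{p+1+n}t^{3n}$, where $f'_{p+1+n} = \sum_{j+k=n+1} f_{p+j}f_{p+k}$. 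In particular $f'_{p+1} = 2f_p f_{p+1}$, and more generally $f'_{p+1+n}$ is a polynomial in $f_p,\dots,f_{p+1+n}$ that is linear in the last one, with coefficient $2f_p$ (the term $j=0,k=n+1$), since every other term involves only $f_p,\dots,f_{p+n}$.

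For part (a): each $f_i$ lies in $E_i \subset \bk_1[X_0,\dots,X_i]$ and is linear in $X_i$, so I must check that $f'_{p+1+n} \in E_{p+1+n}$, i.e.\ that it lies in $\bk_1[X_0,\dots,X_{p+1+n}]$ and has $X_{p+1+n}$-degree exactly $1$. Membership in the right polynomial ring is clear since the highest-index variable appearing is $X_{p+1+n}$ (coming from $f_{p+1+n}$, and $j=0,k=n+1$ in the sum). For the degree-one claim, the term $2 f_p f_{p+1+n}$ contributes $X_{p+1+n}$-degree exactly $1$ because $f_p$ does not involve $X_{p+1+n}$ and $f_{p+1+n}$ has $X_{p+1+n}$-degree $1$; all other terms $f_{p+j}f_{p+k}$ with $j,k \le n$ do not involve $X_{p+1+n}$ at all; and there is no cancellation of the leading $X_{p+1+n}$-coefficient because $\co(f_{p+1+n}) \ne 0$ and $f_p \ne 0$ and we are in a domain. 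Hence $\deg_{X_{p+1+n}} f'_{p+1+n} = 1$, so $f'_{p+1+n} \in E_{p+1+n}$, and $\xi' \in \Sigma_{p+1}$.

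For part (b): fix $(a_0,\dots,a_p) \in V(\xi)$. I need a countable $C \subseteq \bk_2$ so that for $a_{p+1} \notin C$, the point $(a_0,\dots,a_{p+1})$ lies in $V(\xi')$, i.e.\ (1) $f'_{p+1}(a_0,\dots,a_{p+1}) \ne 0$ and (2) for every $i > p+1$, $\deg_{X_i}\!\big(f'_i(a_0,\dots,a_{p+1},X_{p+2},\dots,X_i)\big) = 1$. Since $f'_{p+1} = 2f_p f_{p+1}$ and $f_p(a_0,\dots,a_p) \ne 0$ (as $(a_0,\dots,a_p)\in V(\xi)$), condition (1) is $f_{p+1}(a_0,\dots,a_{p+1}) \ne 0$, which, since $f_{p+1}$ is a nonzero polynomial linear in $X_{p+1}$ with $\co(f_{p+1})(a_0,\dots,a_p)$ possibly zero, fails only on a subset of $\bk_2$ that is either all of $\bk_2$ or finite — and here I must use the hypothesis that $(a_0,\dots,a_p) \in V(\xi)$ more carefully: actually $V(\xi)$ only guarantees $\deg_{X_i}$ conditions for $i > p$, which for $i = p+1$ says $\deg_{X_{p+1}} f_{p+1}(a_0,\dots,a_p,X_{p+1}) = 1$, i.e.\ $\co(f_{p+1})(a_0,\dots,a_p) \ne 0$; so $f_{p+1}$ evaluated at $(a_0,\dots,a_p,\,\cdot\,)$ is a genuinely linear polynomial in $X_{p+1}$, hence vanishes for exactly one value — put that value into $C$. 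For (2), note the leading $X_i$-coefficient of $f'_i$ (viewing $f'_i$ as linear in $X_i$) is $2f_p \cdot \co(f_i)$, a polynomial in $X_0,\dots,X_{i-1}$; after substituting $a_0,\dots,a_p$ it becomes a nonzero polynomial in $X_{p+1},\dots,X_{i-1}$ (nonzero because $f_p(a_0,\dots,a_p)\ne 0$ and $\co(f_i)(a_0,\dots,a_p,X_{p+1},\dots,X_{i-1})$ is nonzero, the latter following from $(a_0,\dots,a_p)\in V(\xi)$ which gives $\deg_{X_i} f_i(a_0,\dots,a_p,X_{p+1},\dots,X_i) = 1$); substituting $X_{p+1}=a_{p+1}$ keeps it nonzero provided $a_{p+1}$ avoids the (finite, for each $i$) zero set of the resulting nonzero single-variable-restriction — more precisely, for each $i$ the condition "leading coefficient does not vanish identically in $X_{p+2},\dots,X_{i-1}$ after setting $X_{p+1}=a_{p+1}$" excludes at most finitely many $a_{p+1}$, so taking the union over all $i>p+1$ adds only countably many values to $C$. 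Setting $C$ to be the union of all these excluded values gives a countable set, and for $a_{p+1}\notin C$ both (1) and (2) hold, so $(a_0,\dots,a_{p+1}) \in V(\xi')$.

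I expect the main obstacle to be bookkeeping rather than any genuine difficulty: one must be careful that "leading $X_i$-coefficient of $f'_i$" really is $2 f_p \co(f_i)$ (no extra contributions from the cross terms $f_{p+j}f_{p+k}$ with both indices $< i$, which is where the recursion $f'_{p+1+n} = \sum_{j+k=n+1} f_{p+j}f_{p+k}$ must be used to confirm that $f_i$ appears in $f'_i$ only through the single product $f_p f_i$), and that passing from "nonzero polynomial in $X_{p+1},\dots,X_{i-1}$" to "nonzero after setting $X_{p+1}=a_{p+1}$" genuinely costs only finitely many $a_{p+1}$ per index $i$ — this is the standard fact that a nonzero polynomial over an infinite field, regarded through one distinguished variable, has a nonzero coefficient polynomial in the remaining variables, hence fails to stay nonzero only on the zero set of that coefficient polynomial in the one specialized variable, which is finite. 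Everything else is formal manipulation of the expansion.
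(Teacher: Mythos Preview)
Your proposal is correct and follows essentially the same approach as the paper: you compute the coefficients $f'_{p+1+n} = \sum_{j+k=n+1} f_{p+j}f_{p+k}$, identify the $X_{p+1+n}$-leading coefficient as $2f_p\,\co(f_{p+1+n})$ to get (a), and for (b) take $C$ as a countable union of finite sets, one for each $i \ge p+1$. One tiny slip: the coefficient $2f_p$ comes from the \emph{two} terms $(j,k)=(0,n+1)$ and $(j,k)=(n+1,0)$, not just the one you parenthetically cite, but your conclusion is unaffected.
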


\begin{proof}
A straightforward calculation gives
\begin{multline*} 
\xi' = \xi^2 - f_p^2t^{-6}
= t^{-3} \big(  2f_pf_{p+1} +  (2f_pf_{p+2}+f_{p+1}^2) t^3 +  \cdots \big) \\
= t^{-3} (  g_{p+1} +  g_{p+2} t^3 +  g_{p+3} t^6 + \cdots ) 
= t^{-3} \sum_{n=0}^\infty  g_{p+1+n} t^{3n},
\end{multline*}
where 
\begin{equation} \label{dfjapsdjfakljdf}
g_{p+1+n} = \sum_{i=0}^{n+1} f_{p+i}f_{p+1+n-i}
\quad \text{for all $n \in \Nat$.}
\end{equation}
Note that 
$g_{p+1+n}$ is equal to $2f_pf_{p+1+n}$ plus a sum of terms
of the form $f_if_j$ with $i,j<p+1+n$; this shows that
\begin{equation}  \label{d9fuq9234rjak}
g_{i} \in E_{i} \text{\ \ and\ \ }   \co( g_i ) = 2f_p \co(f_i) 
\quad \text{for all $i \ge p+1$} .
\end{equation}
In particular, $\xi' \in \Sigma_{p+1}$.

Suppose that $(a_0, \dots, a_{p}) \in V( \xi )$.
Then $f_p(a_0, \dots, a_{p}) \neq 0$ and
\begin{equation}  \label{idfupqawejdfkl}
\deg_{X_i} f_i(a_0, \dots, a_{p}, X_{p+1}, \dots, X_i) = 1
\quad \text{for all $i \ge p+1$.}
\end{equation}
Let
$C = \setspec{ a_{p+1} \in \bk_2 }{ (a_0, \dots, a_{p+1}) \notin V( \xi' ) }$;
we have to show that $C$ is countable.
Note that $C$ is a countable union, $C = \bigcup_{i=p+1}^\infty C_{i}$,
where
$$
C_{p+1} = \setspec{ a_{p+1} \in \bk_2 }{ g_{p+1}(a_0, \dots, a_{p+1}) = 0 }
$$
and, for each $i \ge p+2$,
$$
C_{i} = \setspec{ a_{p+1} \in \bk_2 }
{\deg_{X_i} g_i(a_0, \dots, a_{p+1}, X_{p+2}, \dots, X_i)<1 }.
$$
Since 
$g_{p+1}(a_0, \dots, a_{p+1}) = 
2f_{p}(a_0, \dots, a_{p}) f_{p+1}(a_0, \dots, a_{p+1})$
where 
$f_{p}(a_0, \dots, a_{p}) \neq 0$ and 
$f_{p+1}(a_0, \dots, a_p, X_{p+1}) \in \bk_2[ X_{p+1} ]$ has degree $1$
by the case $i=p+1$ of \eqref{idfupqawejdfkl},
we see that $C_{p+1}$ is a finite set.
Let $i \ge p+2$.
Then $\co( g_i ) = 2f_p \co(f_i)$ by \eqref{d9fuq9234rjak}, so 
$$
\co( g_i )(a_0, \dots, a_{p}, X_{p+1}, \dots, X_{i-1})
= 2f_p(a_0, \dots, a_{p})
\co(f_i)(a_0, \dots, a_{p}, X_{p+1}, \dots, X_{i-1}).
$$
Since $f_p(a_0, \dots, a_{p}) \neq 0$ and, by \eqref{idfupqawejdfkl},
$\co(f_i)(a_0, \dots, a_{p}, X_{p+1}, \dots, X_{i-1}) \neq 0$,
we have
$$
\co( g_i )(a_0, \dots, a_{p}, X_{p+1}, \dots, X_{i-1})
\in \bk_2[ X_{p+1}, \dots, X_{i-1} ] \setminus \{0\}.
$$
Consequently, there are only finitely many $a_{p+1} \in \bk_2$
satisfying 
$$
\co( g_i )(a_0, \dots, a_{p+1}, X_{p+2}, \dots, X_{i-1})=0,
$$
or equivalently
$$
\deg_{X_i} g_i(a_0, \dots, a_{p+1}, X_{p+2}, \dots, X_i)<1.
$$
So $C_i$ is a finite set (for each $i$) and it follows that $C$ is countable.
\end{proof}

\begin{subparag}
For each $p \in \Nat$ we define a set map
(well-defined by Lemma~\ref{d9fr8-q09weida})
$$
\Sigma_p \to \Sigma_{p+1}, \quad  \xi \mapsto \xi'
$$
by setting $\xi' = \xi^2 - f_p^2t^{-6}$,
where the notation for $\xi \in \Sigma_p$ is as in \eqref{d9fq0923wedkalj}.
Define a sequence $(\xi_p)_{p \in \Nat}$ by setting
$\xi_0 = t^{-3} \sum_{n=0}^\infty X_n t^{3n} \in \Sigma_0$ and
$\xi_{p+1} = \xi_p'$ for all $p \in \Nat$.
Note that $\xi_p \in \Sigma_p$ for all $p \in \Nat$,
and let the notation be as follows:
$$
\xi_p = t^{-3} \sum_{n=0}^\infty f_{p,p+n} t^{3n}  
\quad ( f_{p,p+n} \in E_{p+n} ) .
$$
By \eqref{dfjapsdjfakljdf} we have
$f_{p+1,p+1+n} = \sum_{i=0}^{n+1} f_{p,p+i}f_{p,p+1+n-i}$
for all $p,n \in \Nat$, and in particular
\begin{equation}  \label{23894owe4uirkj}
f_{p+1,p+1} = 2 f_{p,p}f_{p,p+1} \quad \text{for all $p \in \Nat$.}
\end{equation}
\end{subparag}

\begin{sublemma}  \label{difupaw9efok}
For each $u_0 \in \Ueul$,
there exists a sequence $(a_i)_{i \in \Nat}$ of elements of $\bk_2$
satisfying the following conditions:

\begin{enumerata}

\item $a_0 = u_0$

\item $(a_0, \dots, a_p) \in V( \xi_p )$, for each $p \in \Nat$

\item $p \mapsto f_{p,p}(a_0, \dots, a_p)$
is an injective map from $\Nat$ to $\Ueul$.

\end{enumerata}
\end{sublemma}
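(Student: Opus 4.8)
The plan is to construct $(a_i)_{i\in\Nat}$ recursively: put $a_0=u_0$ and, given $a_0,\dots,a_p$, use Sublemma~\ref{d9fr8-q09weida}(b) to choose $a_{p+1}$ so as to preserve condition~(b), while simultaneously exploiting the recursion~\eqref{23894owe4uirkj} to force the ``diagonal value'' $c_p:=f_{p,p}(a_0,\dots,a_p)\in\bk_2$ to lie in $\Ueul$ and to differ from $c_0,\dots,c_{p-1}$. The base case is immediate: since $\xi_0=t^{-3}\sum_n X_nt^{3n}$ we have $f_{0,n}=X_n$, so $f_{0,0}(u_0)=u_0\ne0$ (because $u_0=\mu(\{u_0\})\notin\bk_1$ by~\ref{dfuia9wefklzd}(iv)) and $\deg_{X_i}f_{0,i}(u_0,X_1,\dots,X_i)=\deg_{X_i}X_i=1$ for every $i>0$; hence $(a_0)\in V(\xi_0)$ and $c_0=u_0\in\Ueul$.

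For the recursive step, assume $a_0,\dots,a_p$ have been chosen with $(a_0,\dots,a_p)\in V(\xi_p)$, with $c_p\in\Ueul$, and with $c_0,\dots,c_p$ pairwise distinct; note that $c_p\ne0$, this being built into the condition $(a_0,\dots,a_p)\in V(\xi_p)$. The crucial observation is that $c_{p+1}$ depends affinely on the still-free coordinate $a_{p+1}$: the case $i=p+1$ of the condition defining $V(\xi_p)$ says that $f_{p,p+1}(a_0,\dots,a_p,X_{p+1})$ has $X_{p+1}$-degree $1$, i.e.\ equals $\alpha X_{p+1}+\beta$ with $\alpha\in\bk_2^*$ and $\beta\in\bk_2$, so by~\eqref{23894owe4uirkj}
\[
c_{p+1}=f_{p+1,p+1}(a_0,\dots,a_{p+1})=2\,c_p\,(\alpha\,a_{p+1}+\beta),
\]
and the map $\phi:\bk_2\to\bk_2$, $a\mapsto 2c_p(\alpha a+\beta)$, is a bijection since $2c_p\alpha\ne0$. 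On the other hand Sublemma~\ref{d9fr8-q09weida}(b), applied with $\xi=\xi_p$ and $\xi'=\xi_{p+1}$, yields a countable set $C\subseteq\bk_2$ such that $(a_0,\dots,a_{p+1})\in V(\xi_{p+1})$ whenever $a_{p+1}\in\bk_2\setminus C$. As $\phi$ is bijective, $\phi^{-1}(\Ueul)$ is uncountable (by~\ref{dfuia9wefklzd}(ii)), whereas $C\cup\phi^{-1}(\{c_0,\dots,c_p\})$ is countable; pick $a_{p+1}$ in the nonempty difference. Then $(a_0,\dots,a_{p+1})\in V(\xi_{p+1})$, $c_{p+1}=\phi(a_{p+1})\in\Ueul$ and $c_{p+1}\notin\{c_0,\dots,c_p\}$, so all three inductive hypotheses hold at $p+1$.

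The sequence $(a_i)$ thus produced satisfies (a) and (b) by construction, and (c) because $p\mapsto c_p$ is then an injective map $\Nat\to\Ueul$. I do not expect a genuine obstacle in this argument: the one thing to notice is that, by~\eqref{23894owe4uirkj}, $c_{p+1}$ is a nonconstant affine --- hence bijective --- function of the coordinate $a_{p+1}$ that is still free to be chosen; after that, the only input needed is that $\Ueul$ is large enough (uncountable, condition~\ref{dfuia9wefklzd}(ii)) to survive the removal, at each stage, of the countably many forbidden values imposed by Sublemma~\ref{d9fr8-q09weida}(b) together with the finitely many already-used values $c_0,\dots,c_p$.
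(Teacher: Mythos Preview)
Your argument is correct and follows essentially the same route as the paper's own proof: an inductive construction using Sublemma~\ref{d9fr8-q09weida}(b) for the countable forbidden set, together with the recursion~\eqref{23894owe4uirkj} to see that $f_{p+1,p+1}(a_0,\dots,a_p,X_{p+1})$ is affine of degree~$1$ in $X_{p+1}$, so that the preimage of $\Ueul\setminus\{c_0,\dots,c_p\}$ is uncountable and meets $\bk_2\setminus C$. Your base-case verification and the explicit description of the bijection $\phi$ are slightly more detailed than the paper's, but the substance is identical.
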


\begin{proof}
We define  $(a_i)_{i \in \Nat}$ by induction. Define $a_0 = u_0$;
note that $(a_0) \in V( \xi_0 )$ and that $f_{0,0}(a_0)=a_0=u_0 \in \Ueul$.  

Let $p \ge 0$ and assume that  $(a_i)_{i = 0}^p$ is such that 
$a_0=u_0$, $(a_0, \dots, a_i) \in V( \xi_i )$ for all $i \in \{0,\dots, p\}$,
and $i \mapsto f_{i,i}(a_0, \dots, a_i)$ is an injective map
$\{0,\dots, p\} \to \Ueul$.

Define $e_i = f_{i,i}(a_0, \dots, a_i) \in \Ueul$, $0 \le i \le p$.
By \ref{d9fr8-q09weida}, there exists a countable set $C \subset \bk_2$
such that, for each $a_{p+1} \in \bk_2 \setminus C$,
$(a_0, \dots, a_{p+1}) \in V( \xi_{p+1} )$.
By \eqref{23894owe4uirkj} we have $f_{p+1,p+1} = 2 f_{p,p} f_{p,p+1}$, so
\begin{multline*}
f_{p+1,p+1}(a_0, \dots, a_p, X_{p+1})
= 2 f_{p,p}(a_0, \dots, a_p) f_{p,p+1}(a_0, \dots, a_p, X_{p+1})\\
= 2 e_p f_{p,p+1}(a_0, \dots, a_p, X_{p+1})  \in \bk_2[ X_{p+1} ]
\text{ is a polynomial of degree $1$,}
\end{multline*}
because $(a_0, \dots, a_p) \in V( \xi_p )$.
Consequently, $x \mapsto f_{p+1,p+1}(a_0, \dots, a_{p},x)$ is a
bijective map $\bk_2 \to \bk_2$; as $\Ueul \setminus \{e_0, \dots, e_p\}$
is uncountable, we may choose $a_{p+1} \in \bk_2 \setminus C$ such that 
$f_{p+1,p+1}(a_0, \dots, a_{p+1}) \in \Ueul \setminus \{e_0, \dots, e_p\}$.
Then $(a_0, \dots, a_{p+1}) \in V( \xi_{p+1} )$ and
% $f_{p+1,p+1}(a_0, \dots, a_{p+1}) = e_{p+1}$.
$i \mapsto f_{i,i}(a_0, \dots, a_i)$ is an injective map  $\{0,\dots, p+1 \} \to \Ueul$.
\end{proof}

\begin{subcorollary}  \label{kdfi734875oqiejk}
There exist sequences 
$(a_i)_{i \in \Nat}$ and $(e_i)_{i \in \Nat}$ of elements of $\bk_2$
satisfying:
\begin{enumerata}

\item $f_{i,i}(a_0, \dots, a_i)=e_i$ for each $i \in \Nat$;

\item $i \mapsto e_i$ is an injective map from $\Nat$ to $\Ueul$;

\item $a_0 = e_0$ is transcendental over $\bk_0$.

\end{enumerata}
\end{subcorollary}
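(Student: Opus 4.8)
The plan is to deduce this statement immediately from Sublemma~\ref{difupaw9efok} together with condition~(v) of~\ref{dfuia9wefklzd}, so the argument will be very short. First I would invoke condition~(v) to pick an element $u_0 \in \Ueul$ that is transcendental over $\bk_0$. Then I would apply Sublemma~\ref{difupaw9efok} to this particular $u_0$, which produces a sequence $(a_i)_{i \in \Nat}$ of elements of $\bk_2$ such that $a_0 = u_0$, such that $(a_0, \dots, a_p) \in V(\xi_p)$ for every $p \in \Nat$, and such that the map $p \mapsto f_{p,p}(a_0, \dots, a_p)$ is an injective map from $\Nat$ to $\Ueul$.

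Next I would define $e_i = f_{i,i}(a_0, \dots, a_i)$ for each $i \in \Nat$. Then condition~(a) of the corollary holds by definition, and condition~(b)---the injectivity of $i \mapsto e_i$ as a map $\Nat \to \Ueul$---is precisely the last conclusion of Sublemma~\ref{difupaw9efok}. For condition~(c), I would unwind the notation: since $\xi_0 = t^{-3} \sum_{n=0}^\infty X_n t^{3n}$ and $\xi_p = t^{-3} \sum_{n=0}^\infty f_{p,p+n} t^{3n}$, we have $f_{0,0} = X_0$, hence $e_0 = f_{0,0}(a_0) = a_0 = u_0$, which was chosen transcendental over $\bk_0$.

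There is no genuine obstacle here; the corollary is a direct packaging of Sublemma~\ref{difupaw9efok} and condition~(v). The only point that needs a moment's attention is the bookkeeping identification $f_{0,0} = X_0$, which is what allows the transcendental element $u_0 = a_0$ supplied by~(v) to be recognized as $e_0$.
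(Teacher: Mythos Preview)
Your proposal is correct and follows exactly the paper's own argument: pick $u_0 \in \Ueul$ transcendental over $\bk_0$ via condition~(v) of~\ref{dfuia9wefklzd}, apply Sublemma~\ref{difupaw9efok}, and set $e_i = f_{i,i}(a_0,\dots,a_i)$. The only difference is that you make explicit the identification $f_{0,0}=X_0$ needed for part~(c), which the paper leaves implicit.
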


\begin{proof}
By \ref{dfuia9wefklzd}(v), we may pick $u_0 \in \Ueul$ transcendental
over $\bk_0$; then choose $(a_i)_{i \in \Nat}$ satisfying
conditions (a--c) of \ref{difupaw9efok} and  set
$e_i = f_{i,i}(a_0, \dots, a_i)$ for each $i \in \Nat$.
\end{proof}

\begin{subdefinition}   \label{d912923prjasdkl;}
Choose sequences 
$(a_i)_{i \in \Nat}$ and $(e_i)_{i \in \Nat}$ of elements of $\bk_2$
satisfying the conditions of \ref{kdfi734875oqiejk}.
Define $x = t^{-2}$ and
$y = t^{-3} \sum_{n=0}^\infty a_n t^{3n} \in \bk_2((t))$
and, for each $i \in \{ 0, 1, 2 \}$,
consider the subring $B_i = \bk_i[x,y]$ of $\bk_2((t))$
and the degree function 
$\deg_i : B_i \to \Integ \cup \{ -\infty \}$ 
defined by $\deg_i(f) = -\ord_t(f)$, for $f \in B_i$.
Then $B_0 \subset B_1 \subset B_2$,
$\deg_i$ is the restriction of $\deg_j$ when $i \le j$, and
(for each $i=0,1,2$) $\deg_i( \lambda )=0$ for all $\lambda \in \bk_i^*$.
\end{subdefinition}

The notations of \ref{d912923prjasdkl;} are fixed until the end of
\ref{ThisdkjfpqwekzdkdTeiwuo}.
We will now show that $x,y$ are algebraically independent over $\bk_1$
and that $\deg_1$ has values in $\Nat \cup \{ -\infty \}$.
Let $\langle 2, 3 \rangle$ denote
the submonoid of $(\Integ,+)$ generated by $\{ 2, 3\}$.

\begin{sublemma}  \label{diofua;skdmflk}
$B_1 = \bk_1^{[2]}$ and $\deg_1(f) \in \langle 2,3 \rangle$
for all $f \in B_1 \setminus \{0\}$.
\end{sublemma}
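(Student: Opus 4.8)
The plan is to represent every element of $B_1=\bk_1[x,y]$ through the basis supplied by Lemma~\ref{dpfiuqpwejdakl}, and to read off $\ord_t$ from leading coefficients: these will land in the $\bk_1$-linearly independent family $\big(\mu(F)\big)_{F\in\pfin(\Ueul)}$ of \ref{dfuia9wefklzd}(vi), so that the top-degree term of a nonzero element can never cancel. This simultaneously forces $\ord_t(f)\le 0$, pins down $-\ord_t(f)$ in $\langle 2,3\rangle$, and (via injectivity of the evaluation map) yields $B_1=\bk_1^{[2]}$.

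\textbf{Setting up the basis.} Put $c_i=e_i^2 x^3$ for $i\in\Nat$; since $e_i\in\Ueul$ we have $e_i^2\in\bk_1$ by \ref{dfuia9wefklzd}(iii), so $c_i\in\bk_1[x]$. Let $Y$ be an indeterminate over $\bk_1[x]$, apply Lemma~\ref{dpfiuqpwejdakl} with $A=\bk_1[x]$ and the sequence $(c_i)$ to obtain $F_0=Y$ and $F_{i+1}=F_i^2-c_i$, and let $\phi:\bk_1[x][Y]\to\bk_2((t))$ be the $\bk_1[x]$-algebra homomorphism with $\phi(Y)=y$ (the $x,y$ of \ref{d912923prjasdkl;}). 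Write $z_i=\phi(F_i)$. A short induction — using that $\phi$ fixes $\bk_1[x]$, the recursion $\xi_{p+1}=\xi_p^2-f_{p,p}^2t^{-6}$, the identity $f_{p,p}(a_0,\dots,a_p)=e_p$ of \ref{kdfi734875oqiejk}(a), and $t^{-6}=x^3$ — identifies $z_i$ with $\xi_i$ after the coefficientwise specialization $X_n\mapsto a_n$. In particular $\ord_t(z_i)=-3$ with coefficient of $t^{-3}$ equal to $e_i$; here one uses $e_i\neq 0$, which holds since $e_i\in\Ueul$ and $\mu(\{e_i\})=e_i\notin\bk_1$.

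\textbf{The order estimate.} Let $P\in\bk_1[x][Y]\setminus\{0\}$ and write $P=\sum_{k=1}^N\alpha_k\mu(S_k)$ as in Lemma~\ref{dpfiuqpwejdakl}, with $\alpha_k\in\bk_1[x]\setminus\{0\}$ and the $S_k$ distinct finite subsets of $\{F_i:i\in\Nat\}$. Then $\phi(P)=\sum_{k=1}^N\alpha_k\prod_{F_i\in S_k}z_i$, and the $k$-th summand has $\ord_t$ equal to $-\big(2\deg_x\alpha_k+3|S_k|\big)$ with leading coefficient $\lambda_k\,\mu(W_k)$, where $\lambda_k\in\bk_1^*$ is the leading coefficient of $\alpha_k\in\bk_1[x]$ and $W_k=\{e_i:F_i\in S_k\}\in\pfin(\Ueul)$. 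Because $i\mapsto F_i$ and $i\mapsto e_i$ are injective, $S\mapsto\{e_i:F_i\in S\}$ is injective on finite subsets of $\{F_i\}$, so the $W_k$ are pairwise distinct. Let $m=\max_k\big(2\deg_x\alpha_k+3|S_k|\big)$ and let $K$ be the set of indices attaining it; the coefficient of $t^{-m}$ in $\phi(P)$ is $\sum_{k\in K}\lambda_k\,\mu(W_k)$, which is nonzero since the $\mu(W_k)$, $k\in K$, are distinct members of the $\bk_1$-linearly independent family of \ref{dfuia9wefklzd}(vi). Hence $\ord_t(\phi(P))=-m$ with $m=2\deg_x\alpha_k+3|S_k|\in\langle 2,3\rangle$ for some $k$.

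\textbf{Wrap-up and main obstacle.} In particular $\phi(P)\neq 0$ whenever $P\neq 0$, so $\phi$ is injective; as $t$ (hence $x=t^{-2}$) is transcendental over $\bk_2\supseteq\bk_1$, we get $\bk_1[x][Y]=\bk_1^{[2]}$, and therefore $B_1=\phi(\bk_1[x][Y])=\bk_1[x,y]=\bk_1^{[2]}$. Moreover each nonzero $f\in B_1$ equals $\phi(P)$ for some nonzero $P$, so $\deg_1(f)=-\ord_t(f)=m\in\langle 2,3\rangle$ by the order estimate. The main obstacle is precisely the bookkeeping in that estimate: one must verify that distinct basis monomials $\mu(S_k)$ yield genuinely distinct top-degree coefficients $\mu(W_k)$ inside the linearly independent family — which is what rules out any cancellation of leading terms — and must align the recursion defining the $F_i$ with that defining the $\xi_p$ under the specialization $X_n\mapsto a_n$. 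Everything else is routine.
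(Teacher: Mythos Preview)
Your proof is correct and follows essentially the same approach as the paper's: both apply the expansion lemma \ref{dpfiuqpwejdakl} over the coefficient ring $\bk_1[x]$ (the paper writes $\bk_1[X]$ and then substitutes), identify $\phi(F_i)$ with the specialization of $\xi_i$ via the recursion $y_{p+1}=y_p^2-e_p^2x^3$, and use the injectivity of $i\mapsto e_i$ together with \ref{dfuia9wefklzd}(vi) to prevent cancellation of leading terms. Your presentation is slightly more explicit in computing $\ord_t$ of each summand as $-(2\deg_x\alpha_k+3|S_k|)$, but the argument is the same.
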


\begin{proof}
Consider the subring $R$ of $\bk_1( X_0, X_1, \dots)((t))$
whose elements are the series $\sum_{i \in \Integ} f_i t^i$
satisfying $f_i \in \bk_1 [ X_0, X_1, \dots ]$ for all $i \in \Integ$
and $f_i=0$ for $i \ll 0$, and the homomorphism of $\bk_1$-algebras
$$
\textstyle
\phi : R \longrightarrow \bk_2((t)), \quad
\sum_{i \in \Integ} f_i(X_0,X_1,\dots) t^i \longmapsto
\sum_{i \in \Integ} f_i(a_0,a_1,\dots) t^i.
$$
As $\xi_p \in \Sigma_p \subset R$, we may define
$y_p = \phi( \xi_p ) \in \bk_2((t))$ for each $p \in \Nat$.
Then
$y_p = t^{-3} \sum_{n=0}^\infty f_{p,p+n}(a_0, \dots, a_{p+n}) t^{3n}$,
so in particular
\begin{equation}  \label{dofiuawiejf;kla}
y_p = e_p t^{-3} + \text{ higher powers of $t$},
\qquad \text{for all $p\in\Nat$}.
\end{equation}
Note that $y_0 = t^{-3} \sum_{n=0}^\infty a_n t^{3n}$ and 
$y_{p+1} = \phi( \xi_p^2 - f_{p,p}^2t^{-6} )
= y_p^2 - f_{p,p}(a_0, \dots, a_p)^2 t^{-6}$, so
\begin{equation}  \label{dfweuyr823389hjkd}
y_0 = y \quad \text{and} \quad
y_{p+1} = y_p^2 - e_p^2 x^{3}  \text{ for all $p \in \Nat$}.
\end{equation}
As $e_p^2 \in \bk_1$ for all $p$, this implies that 
$( y_p )_{p \in \Nat}$ is a sequence of elements of $B_1=\bk_1[x,y]$.
Consider the polynomial ring $\bk_1[X,Y] = \bk_1^{[2]}$ and
let $\pi : \bk_1[X,Y] \to B_1$ be the $\bk_1$-homomorphism sending
$X$ to $x$ and $Y$ to $y$.
Also define the sequence $( F_p )_{p \in \Nat}$ of elements of $\bk_1[X,Y]$
by $F_0 = Y$ and $F_{p+1} = F_p^2 - e_p^2 X^3$ ($p \in \Nat$).
Then \eqref{dfweuyr823389hjkd} implies that $\pi(F_p) = y_p$
for all $p \in \Nat$.

Given a finite subset $S = \{ {p_1}, \dots, {p_r} \}$ of $\Nat$
(with $p_1< \cdots < p_s$),
let $F_S = \prod_{i=1}^r F_{p_i} \in \bk_1[X,Y]$,
$y_S = \prod_{i=1}^r y_{p_i} \in \bk_1[x,y]$,
and $e_S = \prod_{i=1}^r e_{p_i} \in \bk_2$
(in particular $F_\emptyset = 1$, $y_\emptyset = 1$ and $e_\emptyset = 1$). 
Then \eqref{dofiuawiejf;kla} implies that,
given $\alpha(X) \in \bk_1[X] \setminus \{0\}$,
\begin{multline} 
\label{dpfiuqwoejkf;}
\pi( \alpha(X) F_S ) = \alpha(x) y_S  = \lambda e_S t^{m}\, +
\text{ higher powers of $t$,} \\
\text{for some $\lambda \in \bk_1^*$ and $m \in \langle -2, -3 \rangle$.}
\end{multline}

Let $G \in \bk_1[X,Y] \setminus \{0\}$.
By Lemma~\ref{dpfiuqpwejdakl},
$$
G = \alpha_1(X) F_{S_1} + \cdots + \alpha_N(X) F_{S_N},
$$
where $N \ge1$,
$\alpha_i(X) \in \bk_1[X]\setminus\{0\}$ for each $i$, and
$S_1, \dots, S_N$ are distinct finite subsets of $\Nat$.
Then \eqref{dpfiuqwoejkf;} gives
$$
\pi(G) % = \alpha_1(x) y_{S_1} + \cdots + \alpha_N(x) y_{S_N}.
= \sum_{i=1}^N  \alpha_i(x) y_{S_i}
= \sum_{i=1}^N ( \lambda_i e_{S_i} t^{m_i}\, + \text{higher powers of $t$} )
$$
for some $\lambda_1, \dots, \lambda_N \in \bk_1^*$
and $m_1, \dots, m_N \in \langle -2, -3 \rangle$.
By part (vi) of \ref{Dfiuapw9e89dfajk} together with the fact
that $p \mapsto e_p$ is injective,
the elements $e_{S_1}, \dots, e_{S_N}$ of $\bk_2$ are linearly independent
over $\bk_1$; so $\pi(G) \neq 0$ and 
$\ord_t ( \pi G ) = \min \{ m_1, \dots, m_N \} \in \langle -2, -3 \rangle$.
It follows that $\pi : \bk_1[X,Y] \to B_1$ is
bijective, so $B_1 = \bk_1^{[2]}$.
We also obtain $\deg_1(f) = -\ord_t(f) \in \langle 2, 3 \rangle$
for all $f \in B_1 \setminus\{0\}$, so the Lemma is proved.
\end{proof}

As $\bk_2/\bk_1$ is algebraic, \ref{diofua;skdmflk} implies that 
$x,y$ are algebraically independent over $\bk_2$, so:

\begin{subcorollary}
$B_i = \bk_i^{[2]}$ for $i = 0,1,2$.
\end{subcorollary}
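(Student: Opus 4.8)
The plan is to deduce all three cases from Sublemma~\ref{diofua;skdmflk}, which has already established the substantive fact that $B_1 = \bk_1^{[2]}$. The case $i=1$ is therefore literally the content of that sublemma and nothing remains to be checked.

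For $i=2$, the only point to verify is that $x$ and $y$ are algebraically independent over $\bk_2$; once this is known, $B_2 = \bk_2[x,y]$ is by definition a polynomial ring, so $B_2 = \bk_2^{[2]}$. I would argue by additivity of transcendence degree. By Sublemma~\ref{diofua;skdmflk} we have $\trdeg_{\bk_1} \bk_1(x,y) = 2$, while $\trdeg_{\bk_1}\bk_2 = 0$ since $\bk_2/\bk_1$ is algebraic; applying additivity to the tower $\bk_1 \subseteq \bk_2 \subseteq \bk_2(x,y)$ forces $\trdeg_{\bk_2}\bk_2(x,y) = 2$, i.e. $x,y$ are algebraically independent over $\bk_2$. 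This is exactly the implication announced in the sentence immediately preceding the statement.

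For $i=0$, since $\bk_0 \subseteq \bk_1$, algebraic independence of $x,y$ over $\bk_1$ (hence over the subfield $\bk_0$) gives at once $B_0 = \bk_0[x,y] = \bk_0^{[2]}$. There is no real obstacle at this stage: all of the difficulty was concentrated in Sublemma~\ref{diofua;skdmflk}, and what is left is only the elementary transcendence-degree bookkeeping described above.
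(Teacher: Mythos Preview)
Your proposal is correct and follows the same approach as the paper: the paper simply notes that since $\bk_2/\bk_1$ is algebraic, Sublemma~\ref{diofua;skdmflk} forces $x,y$ to be algebraically independent over $\bk_2$ (you spell out the transcendence-degree bookkeeping, the paper leaves it implicit), and the case $i=0$ is immediate from $\bk_0\subseteq\bk_1$.
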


\begin{sublemma}  \label{i23r4n2349qkdflak}
Let $A$ be a subring of $\bk_1$ satisfying $u^2 \in A$
for all $u \in \Ueul$.
Consider the subring $A[x,y]=A^{[2]}$ of $B_1 = \bk_1[x,y]$,
the degree function $\deg : A[x,y] \to \Nat \cup \{ -\infty \}$
defined by $\deg(f) = -\ord_t(f)$, and the $A$-derivation
$\frac{ \partial }{ \partial y }: A[x,y] \to A[x,y]$.
Then $\deg( \frac{ \partial }{ \partial y } )$ is not defined.
\end{sublemma}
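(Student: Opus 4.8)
The plan is to reuse the sequence $(y_p)_{p\in\Nat}$ built inside the proof of \ref{diofua;skdmflk} — all the substantive work there (that $B_1=\bk_1^{[2]}$ and the leading-term formula \eqref{dofiuawiejf;kla}) is exactly what makes this corollary-style statement cheap — and to exhibit, for $D=\frac{\partial}{\partial y}$, a sequence of nonzero elements of $A[x,y]$ along which $\deg(Df)-\deg(f)\to\infty$.

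First I would set things up over $A$. Since $e_p\in\Ueul$ and $u^2\in A$ for every $u\in\Ueul$, we have $e_p^2\in A$ for all $p$, so the recursion $y_0=y$, $y_{p+1}=y_p^2-e_p^2x^3$ of \eqref{dfweuyr823389hjkd} shows that $(y_p)_{p\in\Nat}$ is a sequence of elements of the subring $A[x,y]$. Moreover $x$ and $y$ are algebraically independent over $\bk_1\supseteq A$, so $A[x,y]=A^{[2]}$, and by \ref{diofua;skdmflk} the map $f\mapsto-\ord_t(f)$ takes values in $\langle 2,3\rangle\subseteq\Nat$ on $A[x,y]\setminus\{0\}$, so it is the degree function $\deg$ of the statement; write $D=\frac{\partial}{\partial y}:A[x,y]\to A[x,y]$.

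Next I would compute the two relevant degrees. By \eqref{dofiuawiejf;kla}, $y_p=e_pt^{-3}+\text{ higher powers of $t$}$ with $e_p\neq0$ (an element of $\Ueul$ is not even in $\bk_1$, by \ref{dfuia9wefklzd}(iv)), so $\ord_t(y_p)=-3$ and $\deg(y_p)=3$ for all $p$. For the other degree, differentiating \eqref{dfweuyr823389hjkd} with respect to $y$, using $D(x)=0=D(e_p^2)$, gives $D(y_{p+1})=2y_pD(y_p)$; since $D(y_0)=1$, induction yields $D(y_p)=2^p\,y_0y_1\cdots y_{p-1}$, a nonzero element of $A[x,y]$. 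As $\deg$ is additive on products and $\deg(2^p)=0$, we get $\deg(D(y_p))=\sum_{i=0}^{p-1}\deg(y_i)=3p$.

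Finally, $\deg(D(y_p))-\deg(y_p)=3p-3$ for every $p\ge1$, so $\setspec{\deg(D(f))-\deg(f)}{f\in A[x,y]\setminus\{0\}}$ is not bounded above in $\Integ$; equivalently, $\deg(D)=\deg(\frac{\partial}{\partial y})$ is not defined. I do not expect a genuine obstacle here: beyond invoking \ref{diofua;skdmflk}, the only points that need a word of care are that $e_p^2\in A$ (so that the $y_p$ actually lie in $A[x,y]$) and that each $y_i$ is nonzero (so that the product formula for $D(y_p)$ really has degree $3p$).
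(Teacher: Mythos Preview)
Your argument is correct and follows essentially the same route as the paper: both use the sequence $(y_p)$ from the proof of \ref{diofua;skdmflk}, note that $e_p^2\in A$ forces $y_p\in A[x,y]$, read off $\deg(y_p)=3$ from \eqref{dofiuawiejf;kla}, and differentiate the recursion to obtain $\deg(Dy_p)=3p$. The only cosmetic difference is that you write out the closed form $D(y_p)=2^p y_0\cdots y_{p-1}$, whereas the paper simply observes $\deg(Dy_{p+1})=3+\deg(Dy_p)$ and inducts on the degree directly.
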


\begin{proof}
Consider the sequence $(y_p)_{ p \in \Nat }$ of elements of $B_1$ defined
in the proof of \ref{diofua;skdmflk}.  As $y_0 = y \in A[x,y]$ and
$e_p^2 \in A$ for all $p \in \Nat$, \eqref{dfweuyr823389hjkd}
implies that $y_p \in A[x,y]$ for all $p \in \Nat$.
Also, \eqref{dofiuawiejf;kla} shows that $\deg( y_p ) = 3$ for all $p \in \Nat$.
Write $D = \frac{ \partial }{ \partial y }$, then
$D( y_{p+1} ) = D( y_p^2 - e_p^2 x^3 ) = D( y_p^2 ) = 2 y_p D( y_p )$,
so $\deg( D y_{p+1} ) = 3 + \deg( D y_p )$. Consequently,
$\deg( D y_{p} )  = 3p$ and hence
$\deg( D y_{p} ) - \deg( y_p )  = 3p-3$ for all $p \in \Nat$.
So $\deg(D)$ is not defined.
\end{proof}

For each $i = 0,1,2$, define the $\bk_i$-derivation
$D_i = \frac{\partial}{\partial y} : B_i \to B_i$.
By \ref{i23r4n2349qkdflak} we know that 
$\deg_1(D_1)$ is not defined, so in fact:

\begin{subcorollary}  \label{dfiqwekjal;ksd;}
$\deg_1(D_1)$ and $\deg_2(D_2)$ are not defined.
\end{subcorollary}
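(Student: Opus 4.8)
The plan is to reuse, essentially verbatim, the explicit sequence of elements and the degree computation already produced in the proofs of Sublemmas~\ref{diofua;skdmflk} and~\ref{i23r4n2349qkdflak}. For the first assertion there is nothing new to do, and this was in fact already recorded in the paragraph preceding the statement: since $\bk_1$ is (trivially) a subring of $\bk_1$ and $u^2 \in \bk_1$ for all $u \in \Ueul$ by condition~(iii) of \ref{dfuia9wefklzd}, Sublemma~\ref{i23r4n2349qkdflak} applies with $A = \bk_1$ and gives that $\deg_1(D_1)$ is not defined.

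For the second assertion I would argue that the very same witnessing sequence works over $\bk_2$. Recall the sequence $(y_p)_{p\in\Nat}$ of elements of $B_1$ built in the proof of \ref{diofua;skdmflk}: it satisfies $y_0 = y$, $y_{p+1} = y_p^2 - e_p^2 x^3$, and $\deg_1(y_p) = 3$ for all $p$ (because $y_p = e_p t^{-3} + (\text{higher powers of }t)$ with $e_p \neq 0$). Since every $y_p$ lies in $B_1 \subseteq B_2$, and since $D_2 = \frac{\partial}{\partial y}$ restricted to $B_1$ coincides with $D_1$ while $\deg_2$ restricted to $B_1$ coincides with $\deg_1$, the recursion $D_2(y_{p+1}) = D_2(y_p^2 - e_p^2 x^3) = 2 y_p\, D_2(y_p)$ holds in $B_2$ — here one uses that $e_p^2 \in \bk_1 \subseteq \bk_2$ and $x$ are both annihilated by $\frac{\partial}{\partial y}$. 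Exactly as in the proof of \ref{i23r4n2349qkdflak}, it follows that $\deg_2(D_2 y_p) = 3p$, hence $\deg_2(D_2 y_p) - \deg_2(y_p) = 3p - 3$ for all $p$, so the set $\setspec{\deg_2(D_2 f)-\deg_2(f)}{f\in B_2\setminus\{0\}}$ is not bounded above in $\Integ$. Since for $G = \Integ$ being defined is equivalent to this set being bounded above, $\deg_2(D_2)$ is not defined.

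The only subtlety — and it is a very mild one, which is why I do not expect a genuine obstacle — is that Sublemma~\ref{i23r4n2349qkdflak} is stated only for subrings $A$ of $\bk_1$, whereas $\bk_2$ is not contained in $\bk_1$, so one cannot simply quote it for $B_2$. The point to verify is therefore that the three facts actually used (namely: the elements $y_p$ lie in $B_1$; they have $\deg = 3$; and $\frac{\partial}{\partial y}$ kills each $e_p^2 x^3$) all survive when the ground field is enlarged from $\bk_1$ to $\bk_2$. Because $y_p \in B_1$ and $e_p^2 \in \bk_1$, they plainly do, so the argument goes through unchanged.
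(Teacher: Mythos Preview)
Your proposal is correct and follows the same approach as the paper. The paper treats the corollary as immediate from Sublemma~\ref{i23r4n2349qkdflak} (writing only ``so in fact'' before the statement), the point being exactly what you spell out: the witnessing sequence $(y_p)$ lies in $B_1 \subseteq B_2$, and since $\deg_2$ and $D_2$ restrict to $\deg_1$ and $D_1$ on $B_1$, the same unbounded set $\{3p-3 : p \in \Nat\}$ shows that $\deg_2(D_2)$ is not defined.
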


\begin{sublemma}
$\setspec{ \deg_2(f) }{ f \in B_2 \setminus \{0\} } = \Integ$
\end{sublemma}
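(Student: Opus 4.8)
The plan is to reduce everything to a statement about the set $V := \setspec{\deg_2(f)}{f \in B_2 \setminus \{0\}}$, which is a submonoid of $(\Integ,+)$ because $\deg_2$ is multiplicative. It suffices to show that $V$ contains $2$, contains $3$, and contains at least one strictly negative integer. The first two points are immediate: $\deg_2(x) = -\ord_t(t^{-2}) = 2$, and since $y = a_0 t^{-3} + a_1 + a_2 t^3 + a_3 t^6 + \cdots$ with $a_0 = e_0 \neq 0$, we get $\deg_2(y) = 3$. Hence $V$ contains the submonoid $\langle 2, 3 \rangle$, that is, $0$ together with every integer $\geq 2$.

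To produce a negative value, the idea is to exhibit an explicit nonzero element of $B_2$ of strictly positive $t$-order. Since $y - a_1 = a_0 t^{-3} + a_2 t^3 + a_3 t^6 + \cdots$ has no terms of $t$-order strictly between $-6$ and $0$, its square is $a_0^2 t^{-6} + 2 a_0 a_2 + 2 a_0 a_3 t^3 + \cdots$; therefore
$$
g := (y - a_1)^2 - a_0^2 x^3 - 2 a_0 a_2 \in B_2 = \bk_2[x,y]
$$
satisfies $\ord_t(g) \geq 3$, so $\deg_2(g) \leq -3$ provided $g \neq 0$. This is the one place where structure enters: if $g = 0$ then $P(X,Y) := Y^2 - 2 a_1 Y + (a_1^2 - 2 a_0 a_2) - a_0^2 X^3$ would be a nonzero element of $\bk_2[X,Y]$ with $P(x,y) = 0$, contradicting the fact, established just after \ref{diofua;skdmflk}, that $x$ and $y$ are algebraically independent over $\bk_2$ (equivalently $B_2 = \bk_2^{[2]}$). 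Thus $g \neq 0$ and $\deg_2(g) \leq -3$ lies in $V$.

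It then remains to carry out the monoid computation: if $V$ is a submonoid of $\Integ$ containing every integer $\geq 2$ and some $-m$ with $m \geq 1$, then $-km \in V$ for every $k \geq 1$, hence $-km + n \in V$ for every $n \in \langle 2, 3 \rangle$, so $V$ contains every integer $\geq 2 - km$; letting $k \to \infty$ gives $V = \Integ$, and the reverse inclusion $V \subseteq \Integ$ is trivial. I do not expect any serious obstacle: the only delicate point is the nonvanishing of $g$, which is handled by the algebraic independence of $x$ and $y$ over $\bk_2$ already proved above; everything else is elementary bookkeeping with Laurent expansions and submonoids of $\Integ$.
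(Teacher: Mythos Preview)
Your proof is correct and essentially identical to the paper's: your element $g = (y-a_1)^2 - a_0^2 x^3 - 2a_0a_2$ is literally the same as the paper's $w = y^2 - a_0^2 x^3 - 2a_1 y - 2a_0a_2 + a_1^2$, and both arguments conclude by observing that the submonoid $\langle 2,3,\deg_2(g)\rangle$ of $\Integ$ is all of $\Integ$. The only difference is cosmetic: the paper computes the leading term $2a_0a_3 t^3$ explicitly while you stop at the inequality $\ord_t(g)\ge 3$, and you spell out the monoid argument in more detail.
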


\begin{proof}
Consider the element
$w = y^2 - a_0^2 x^3 - 2a_1 y - 2 a_0 a_2 + a_1^2$ of $\bk_2[x,y]$.
Using $y = a_0 t^{-3} + a_1 + a_2 t^3 + \cdots$ and $x = t^{-2}$, we
find
$w = 2a_0a_3 t^3 + \text{ higher powers of $t$}$,
so $\ord_t(w)>0$. 
Note that $w \neq 0$, since $x,y$ are algebraically independent over
$\bk_2$.  So $\deg(w)$ is a negative integer and consequently
$\langle 2, 3, \deg(w) \rangle = \Integ$, which proves the Lemma.
\end{proof}

\begin{sublemma}  \label{dfiouq9wemkddd}
$\Gr( B_1 )$ is not affine over $\bk_1$ and
$\Gr( B_2 )$ is affine over~$\bk_2$.
\end{sublemma}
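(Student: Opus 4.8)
The plan is to prove the two assertions of \ref{dfiouq9wemkddd} separately; the affineness of $\Gr(B_2)$ is almost immediate, while the non-affineness of $\Gr(B_1)$ is the substantive part.

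For $\Gr(B_2)$: by \ref{d912923prjasdkl;} we have $\bk_2\subseteq B_2\subseteq\bk_2((t))$, with $\deg_2$ equal to the restriction to $B_2$ of the order-valuation degree function $-\ord_t$ on $\bk_2((t))$ (this is \eqref{diufqiwejka3999} with $\bk=\bk_2$), so the reasoning of \ref{dfkja;sdjf;a} applies verbatim and shows that $\Gr(B_2)$ is affine over $\bk_2$. Even more explicitly: the natural graded embedding $\Gr(B_2)\hookrightarrow\Gr(\bk_2((t)))\isom\bk_2[t,t^{-1}]$ is onto, because by the preceding Sublemma $\deg_2$ attains every value of $\Integ$ while each homogeneous component of $\bk_2[t,t^{-1}]$ is one-dimensional over $\bk_2$; hence $\Gr(B_2)\isom\bk_2[t,t^{-1}]$.

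For $\Gr(B_1)$: the idea is to show that the homogeneous component $(B_1)_{[3]}$ of $\Gr(B_1)$ is infinite-dimensional over $\bk_1$. I would use the elements $y_p\in B_1$ ($p\in\Nat$) from the proof of \ref{diofua;skdmflk}: by \eqref{dofiuawiejf;kla}, $y_p=e_pt^{-3}+(\text{higher powers of }t)$ with $e_p\in\Ueul$, and $e_p\neq0$ because $0\in\bk_1$ whereas $e_p\notin\bk_1$ by condition~(iv) of \ref{dfuia9wefklzd} applied to the singleton $\{e_p\}$. Thus $\deg_1(y_p)=3$, so $\gr(y_p)$ is a nonzero homogeneous element of $(B_1)_{[3]}$, and it maps to $e_pt^{-3}$ under the natural graded embedding $\Gr(B_1)\hookrightarrow\Gr(\bk_2((t)))\isom\bk_2[t,t^{-1}]$. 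Now $\{e_p\mid p\in\Nat\}$ is $\bk_1$-linearly independent: it is an infinite subset of $\Ueul$ since $p\mapsto e_p$ is injective (\ref{kdfi734875oqiejk}(b)), and every finite subset of $\Ueul$ is $\bk_1$-linearly independent because its elements occur, as the values $\mu(\{u\})=u$, in the family $\big(\mu(F)\big)_{F\in\pfin(\Ueul)}$, which is $\bk_1$-linearly independent by part~(vi) of \ref{Dfiuapw9e89dfajk}. Hence $\{\gr(y_p)\mid p\in\Nat\}$ is $\bk_1$-linearly independent, so $\dim_{\bk_1}(B_1)_{[3]}=\infty$.

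To finish I would argue by contradiction. Assume $\Gr(B_1)=\bigoplus_d(B_1)_{[d]}$, which is graded by the submonoid $\langle 2,3 \rangle$ of $\Nat$ (by \ref{diofua;skdmflk}), is affine over $\bk_1$. Taking homogeneous components of a finite $\bk_1$-algebra generating set, write the resulting finite homogeneous generating set as $v_1,\dots,v_k$ (of degree $0$) and $f_1,\dots,f_n$ (of positive degree). Each $v_i$ lies in $(B_1)_{[0]}$, which embeds into the degree-$0$ part $\bk_2$ of $\bk_2[t,t^{-1}]$ and hence consists of elements algebraic over $\bk_1$; so $\bk_1':=\bk_1[v_1,\dots,v_k]$ is a finite field extension of $\bk_1$, and in fact $(B_1)_{[0]}=\bk_1'$ while $\Gr(B_1)=\bk_1'[f_1,\dots,f_n]$. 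Since $\deg f_j\ge 2$ for all $j$, every $(B_1)_{[d]}$ is spanned over $\bk_1'$ by the finitely many monomials in $f_1,\dots,f_n$ of degree $d$, hence is finite-dimensional over $\bk_1'$, hence over $\bk_1$ --- contradicting $\dim_{\bk_1}(B_1)_{[3]}=\infty$. Therefore $\Gr(B_1)$ is not affine over $\bk_1$. I do not anticipate a serious obstacle: all the ingredients ($y_p$, the $\bk_1$-independence of $\{e_p\}$, and $B_1=\bk_1^{[2]}$ with $\deg_1$-values in $\langle 2,3 \rangle$) are already supplied by \ref{diofua;skdmflk} and its supporting lemmas. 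The one point to be careful about is that $(B_1)_{[0]}$ need not be literally $\bk_1$ but is at least a finite extension of $\bk_1$ (because it lands inside the algebraic extension $\bk_2$), which is exactly what keeps the ``finitely generated graded ring $\Rightarrow$ finite-dimensional components'' step valid.
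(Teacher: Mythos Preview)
Your proof is correct. The treatment of $\Gr(B_2)$ is the same as the paper's (both invoke \ref{dfkja;sdjf;a}), but for $\Gr(B_1)$ you take a genuinely different route. The paper argues by contraposition from the tameness criterion of \ref{dfj;askdjflaksj}\eqref{747438463746}: since $\deg_1$ takes values in $\Nat$ (\ref{diofua;skdmflk}) and $\deg_1(D_1)$ is undefined (\ref{dfiqwekjal;ksd;}), $\Gr(B_1)$ cannot be $\bk_1$-affine. Your argument is instead a direct computation showing that $(B_1)_{[3]}$ is infinite-dimensional over $\bk_1$, using the elements $y_p$ and the $\bk_1$-linear independence of $\{e_p\}$ supplied by \ref{Dfiuapw9e89dfajk}(vi). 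Your approach is more elementary and self-contained---it does not appeal to the results of Section~\ref{Secd;lkfjpqw93ejdkla}, whose proofs come later---while the paper's argument is shorter and illustrates how the positive results feed back into the examples. Your handling of the one subtle point, that $(B_1)_{[0]}$ need not equal $\bk_1$ but is still a finite extension of it (via the embedding into $\bk_2$), is exactly right and is what makes the ``finitely many monomials of each degree'' step go through.
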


\begin{proof}
The fact that $\Gr( B_2 )$ is affine over~$\bk_2$ follows
from $\bk_2 \subset B_2 \subset \bk_2((t))$ and $\deg_2 = -\ord_t$,
by \ref{dfkja;sdjf;a}.
Because $B_1 \nsubseteq \bk_1((t))$, we cannot apply the same argument
and show that $\Gr( B_1 )$ is affine.
In fact \ref{dfj;askdjflaksj}(b) implies that $\Gr( B_1 )$ is
not affine over $\bk_1$, because 
$\deg_1$ has values in $\Nat$ (\ref{diofua;skdmflk})
and $\deg_1( D_1 )$ is not defined (\ref{dfiqwekjal;ksd;}).
\end{proof}

The fact that $a_0$ is transcendental over $\bk_0$
(cf.\ \ref{kdfi734875oqiejk} and \ref{d912923prjasdkl;})
played no role up to this point.  It is needed for the following:

\begin{sublemma}  \label{df9q12903aokss}
Let $\ggoth$ be the $\Nat$-grading
$B_0 = \bk_0[x,y] = \bigoplus_{i \in \Nat} R_i$ of $B_0$
defined by the conditions $R_0 = \bk_0$, $x \in R_2$ and $y \in R_3$.
Then $\deg_0$ is the degree function determined by~$\ggoth$.
Consequently, $\deg_0( D_0 )$ is defined
and $\Gr( B_0 )$ is affine over~$\bk_0$.
\end{sublemma}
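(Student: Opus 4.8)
The plan is to prove the Sublemma by establishing the single identity $\deg_0 = \deg_\ggoth$, after which both stated consequences follow formally. First I would recall that $B_0 = \bk_0[x,y] = \bk_0^{[2]}$ (proved above), so that $\ggoth$ is genuinely a grading and the monomials $x^iy^j$ ($i,j \in \Nat$) form a $\bk_0$-basis of $B_0$ with $\deg_\ggoth(x^iy^j) = 2i+3j$. Using $x = t^{-2}$ and $y = t^{-3}\sum_{n\ge0} a_n t^{3n}$ with $a_0 \neq 0$ (since $a_0$ is transcendental over $\bk_0$), one has $x^iy^j = t^{-(2i+3j)}(a_0 + a_1t^3 + a_2t^6 + \cdots)^j$, where the power series factor lies in $\bk_2[[t]]$ and has constant term $a_0^j \neq 0$; hence $\ord_t(x^iy^j) = -(2i+3j)$ and the coefficient of $t^{-(2i+3j)}$ in $x^iy^j$ is $a_0^j$.

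Next, for a nonzero $f = \sum_{i,j} c_{ij}x^iy^j \in B_0$ I would set $d_0 = \deg_\ggoth(f) = \max\{\,2i+3j : c_{ij}\neq 0\,\}$. Each monomial with $2i+3j < d_0$ contributes, in its $t$-expansion, only powers $t^m$ with $m > -d_0$, so the coefficient of $t^{-d_0}$ in $f$ equals $\sum_{2i+3j=d_0} c_{ij}\,a_0^j$. The key point is that the equation $2i+3j = d_0$ determines $i$ from $j$, so the index pairs $(i,j)$ occurring in this sum have pairwise distinct values of $j$; consequently $\sum_{2i+3j=d_0} c_{ij}\,a_0^j$ is the value at $a_0$ of a \emph{nonzero} one-variable polynomial over $\bk_0$ (nonzero because the top $\ggoth$-homogeneous part of $f$ is nonzero). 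Since $a_0$ is transcendental over $\bk_0$ — the hypothesis of \ref{kdfi734875oqiejk}(c)/\ref{d912923prjasdkl;} that has been unused until now — this value is nonzero in $\bk_2$. Therefore $\ord_t(f) = -d_0$, i.e. $\deg_0(f) = d_0 = \deg_\ggoth(f)$, and since $f$ was arbitrary, $\deg_0 = \deg_\ggoth$.

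Finally I would read off the two consequences. Because $B_0$ is $\bk_0$-affine and $\deg_0 = \deg_\ggoth$ is determined by a $\Nat$-grading, Theorem~\ref{dfj;askdjflaksj}\eqref{diur18723hj} gives that $\deg_0$ is tame over $\bk_0$, so in particular $\deg_0(D_0)$ is defined. For the last claim, when $\deg = \deg_\ggoth$ for a grading $B = \bigoplus_{i} R_i$ one has $B_d = \bigoplus_{i\le d} R_i$ and $B_{d^-} = \bigoplus_{i<d} R_i$, whence $B_{[d]} \cong R_d$ and $\Gr(B_0) \cong B_0 = \bk_0^{[2]}$ as graded rings; in particular $\Gr(B_0)$ is affine over $\bk_0$.

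The main obstacle is the nonvanishing of the leading $t$-coefficient in the middle step: a priori the several $\ggoth$-homogeneous monomials of top degree could cancel and lower the $t$-order, so that $\deg_0$ would only satisfy $\deg_0 \le \deg_\ggoth$. What rules this out is precisely the arithmetic fact that a fixed value of $2i+3j$ pins down $i$ in terms of $j$, so distinct top-degree monomials contribute distinct powers of $a_0$, together with the transcendence of $a_0$ over $\bk_0$. Everything else is routine unwinding of the definitions of $\deg_\ggoth$ and $\Gr$.
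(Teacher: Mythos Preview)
Your proof is correct and follows essentially the same route as the paper: compute the leading term $x^iy^j = a_0^j t^{-(2i+3j)} + \cdots$, then use the transcendence of $a_0$ over $\bk_0$ to rule out cancellation among the top-degree monomials, and finally read off the consequences via \ref{dfj;askdjflaksj}\eqref{diur18723hj} and $\Gr(B_0)\isom B_0$. The paper compresses your middle paragraph into the phrase ``it easily follows,'' whereas you make explicit the arithmetic point that $2i+3j=d_0$ forces distinct $j$'s (hence a genuine nonzero polynomial in $a_0$); this is a welcome clarification but not a different argument.
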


\begin{proof}
For each $i,j \in \Nat$, 
$$
x^iy^j = a_0^j t^{-2i-3j} + \text{ higher powers of $t$},
$$
and $a_0$ is transcendental over $\bk_0$.
It easily follows that if $S$ is a nonempty finite subset of $\Nat^2$
and $( \lambda_{ij} )_{ (i,j) \in S }$ is a family of elements
of $\bk_0 \setminus \{0\}$, then 
$$
\ord_t \bigg( \sum_{(i,j) \in S } \lambda_{ij} x^iy^j \bigg)
= \min \setspec{ -2i-3j }{ (i,j) \in S },
$$
or equivalently,
$\deg_0 \big( \sum_{(i,j) \in S } \lambda_{ij} x^iy^j \big)
= \max \setspec{ 2i+3j }{ (i,j) \in S }$.
So $\deg_0$ is the degree function determined by $\ggoth$.
A straightforward calculation shows that
$\deg_0( D_0 )$ is defined and is equal to $-3$
(alternatively, $\deg_0( D_0 )$ is defined by \ref{dfj;askdjflaksj}).
Since $\deg_0$ is determined by a grading of $B_0$,
we have $\Gr(B_0) \isom B_0$, so $\Gr(B_0)$ is affine.
\end{proof}

\begin{parag}  \label{dkfja;ksdjf;akj}
{\bf Proof of \ref{dfqp9023rklawej}.}
Let $\bk_0$ be an uncountable field of characteristic zero,
$\bk_1$ a function field over $\bk_0$
and $\bk_2$ the algebraic closure of $\bk_1$.
By \ref{difuqp9023klajsd;kl}, there exists a set $\Ueul$ such that
$(\bk_0, \bk_1, \bk_2, \Ueul)$ satisfies the requirements
of \ref{dfuia9wefklzd};
then all results of paragraph \ref{ThisdkjfpqwekzdkdTeiwuo} are valid
when applied to $(\bk_0, \bk_1, \bk_2, \Ueul)$.
Define the degree functions $\deg_i$ ($i=0,1,2$) as in \ref{d912923prjasdkl;}
and note that, by \ref{diofua;skdmflk},
$\deg_0$ and $\deg_1$ have values in $\Nat \cup \{ -\infty \}$.
Assertions (a) and (b) of \ref{dfqp9023rklawej} are clear,
and (c), (d), (e) follow from 
\ref{dfiqwekjal;ksd;}, \ref{dfiouq9wemkddd} and \ref{df9q12903aokss}
(note that, for each $i=1,2$, $\deg_i$ cannot be determined
by a grading of $B_i$ because that would imply that $\deg_i(D_i)$ is 
defined, by \ref{dfj;askdjflaksj}). \hfill \qedsymbol
\end{parag}

\begin{corollary}  \label{dpfiouqawklej}
Let $A$ be a domain which contains 
an uncountable field $\bk$ of characteristic zero,
and such that $\Frac(A)$ is a function field over $\bk$.
Consider $A[X,Y] = A^{[2]}$ and the $A$-derivation
$\frac{ \partial }{ \partial Y} : A[X,Y] \to A[X,Y]$.
Then there exists a degree function 
$\deg : A[X,Y] \to \Nat \cup \{-\infty\}$
such that $\deg(a) = 0$ for all $a \in A \setminus \{0\}$,
and such that 
$\deg( \frac{\partial}{\partial Y} )$ is not defined.
\end{corollary}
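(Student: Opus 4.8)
The plan is to derive \ref{dpfiouqawklej} directly from Sublemma~\ref{i23r4n2349qkdflak}, combined with the ``moreover'' clause of Lemma~\ref{difuqp9023klajsd;kl}, and then to transport the result along a polynomial isomorphism. First I would set $\bk_0 = \bk$, $\bk_1 = \Frac(A)$ and let $\bk_2$ be the algebraic closure of $\bk_1$. Since $A$ is a domain containing the uncountable field $\bk$ of characteristic zero and $\Frac(A)$ is a function field over $\bk$, the triple $(\bk_0,\bk_1,\bk_2)$ meets the hypotheses of \ref{difuqp9023klajsd;kl}, and $A$ itself satisfies $\bk_0 \subseteq A \subseteq \bk_1$ and $\Frac(A) = \bk_1$. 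Hence \ref{difuqp9023klajsd;kl} provides a subset $\Ueul \subseteq \bk_2$ such that $(\bk_0,\bk_1,\bk_2,\Ueul)$ satisfies the conditions of \ref{dfuia9wefklzd} and, in addition, $u^2 \in A$ for every $u \in \Ueul$.

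With that $4$-tuple fixed, I would invoke the constructions of paragraph~\ref{ThisdkjfpqwekzdkdTeiwuo}: choose sequences $(a_i)_{i \in \Nat}$ and $(e_i)_{i \in \Nat}$ as in \ref{kdfi734875oqiejk}, put $x = t^{-2}$ and $y = t^{-3}\sum_{n \ge 0} a_n t^{3n}$ in $\bk_2((t))$, and consider the subring $A[x,y]$ of $B_1 = \bk_1[x,y]$. By \ref{diofua;skdmflk}, $x$ and $y$ are algebraically independent over $\bk_1$, hence over $A$, so $A[x,y] = A^{[2]}$; and, again by \ref{diofua;skdmflk}, $\ord_t(f) \le 0$ for every nonzero $f \in B_1$, so $f \mapsto -\ord_t(f)$ defines a degree function $\deg_A : A[x,y] \to \Nat \cup \{-\infty\}$ with values in $\Nat \cup \{-\infty\}$. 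Since $u^2 \in A$ for all $u \in \Ueul$, Sublemma~\ref{i23r4n2349qkdflak} applies to this $A$ and tells us that $\deg_A\big(\tfrac{\partial}{\partial y}\big)$ is not defined.

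It then remains to transport everything along the $A$-algebra isomorphism $\iota : A[X,Y] \to A[x,y]$ with $\iota(X) = x$ and $\iota(Y) = y$ (an isomorphism precisely because $A[x,y] = A^{[2]}$), and to set $\deg := \deg_A \circ \iota$. This is a degree function on $A[X,Y]$ with values in $\Nat \cup \{-\infty\}$; for $a \in A \setminus \{0\}$ one has $\iota(a) = a$, a nonzero element of $\bk_1$, so $\deg(a) = -\ord_t(a) = 0$. Finally, $\iota$ intertwines $\tfrac{\partial}{\partial Y}$ and $\tfrac{\partial}{\partial y}$ (both sides are $A$-derivations agreeing on $X,Y$), so the sets $\setspec{\deg(\tfrac{\partial}{\partial Y}g) - \deg(g)}{g \in A[X,Y]\setminus\{0\}}$ and $\setspec{\deg_A(\tfrac{\partial}{\partial y}h) - \deg_A(h)}{h \in A[x,y]\setminus\{0\}}$ correspond under the bijection $\iota$; as the latter is unbounded above, so is the former, and $\deg\big(\tfrac{\partial}{\partial Y}\big)$ is not defined.

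The argument is almost entirely bookkeeping, all the real work having been done in \ref{diofua;skdmflk}, \ref{difuqp9023klajsd;kl} and \ref{i23r4n2349qkdflak}. The only points requiring care are checking that the given $A$ genuinely fits the hypotheses of the ``moreover'' part of \ref{difuqp9023klajsd;kl} and of \ref{i23r4n2349qkdflak}, and that conjugating by $\iota$ preserves non-definedness of the degree of a derivation; I do not expect either to present a real obstacle.
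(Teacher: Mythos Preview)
Your proposal is correct and follows essentially the same route as the paper: set $\bk_0=\bk$, $\bk_1=\Frac(A)$, $\bk_2=\overline{\bk_1}$, use the ``moreover'' clause of Lemma~\ref{difuqp9023klajsd;kl} to obtain $\Ueul$ with $u^2\in A$, and then invoke Sublemma~\ref{i23r4n2349qkdflak}. The paper's proof is a two-line version of exactly this; your additional paragraph transporting along $\iota:A[X,Y]\to A[x,y]$ simply makes explicit a bookkeeping step the paper leaves to the reader.
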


\begin{proof}
Let $\bk_0=\bk$, $\bk_1 = \Frac(A)$ and $\bk_2$ the algebraic
closure of $\bk_1$.
By \ref{difuqp9023klajsd;kl}, there exists a set $\Ueul$ such that
$(\bk_0, \bk_1, \bk_2, \Ueul)$ satisfies the requirements of
\ref{dfuia9wefklzd} and $u^2 \in A$ for all $u \in \Ueul$.
So we are done by \ref{i23r4n2349qkdflak}.
\end{proof}

\begin{parag}  \label{kdfjiidfjie838} 
{\bf Proof of \ref{dfuqpiejrqkmkkkkd}.}
There exist an uncountable field $\bk_0$ of characteristic zero
and a function field $\bk_1$ over $\bk_0$ such that the algebraic
closure of $\bk_1$ is $\Comp$.
Then the triple $(\bk_0, \bk_1, \bk_2=\Comp)$ satisfies the
hypothesis of \ref{dfqp9023rklawej}.
Let us denote by $d'$ the degree function
$\deg_2 : B_2 = \Comp[X,Y] \to \Integ\cup\{-\infty\}$
given by \ref{dfqp9023rklawej}.
Also consider the grading of $\Comp[X,Y]$ defined by stipulating that
$X,Y$ are homogeneous of degrees $2$ and $3$ respectively,
and let $d : \Comp[X,Y] \to \Integ\cup\{-\infty\}$
be the degree function determined by this grading.
Using \ref{dfqp9023rklawej}, it is easily verified that
$d \mid_{\Rat[X,Y]} = d' \mid_{\Rat[X,Y]}$ and that $d'$ is wild over $\Comp$.
By \ref{dfj;askdjflaksj}\eqref{diur18723hj}, $d$ is tame over $\Comp$.
So $d$ and $d'$ satisfy the desired conditions.
\hfill \qedsymbol
\end{parag}

\section{Some positive results}
\label{Secd;lkfjpqw93ejdkla}

We prove several results which assert
that degree functions satisfying certain hypotheses are tame.
The main results are \ref{dkfjaiosdfupaio}, \ref{NEW_dofiq-290348rakj}
and \ref{d9320495823erfrj}.

\begin{setup} \label{setup}
Throughout Section~\ref{Secd;lkfjpqw93ejdkla} we consider a triple
$(B,G,\deg)$ where
$B$ is a domain of characteristic zero,
$G$ be a totally ordered abelian group
and $\deg : B \to G \cup \{ -\infty\}$ a degree function.
\end{setup}

\begin{parag}  \label{dkfjasiodufpao}
Let $(B,G,\deg)$ be as in \ref{setup}.
If $D : B \to B$ is a derivation, one defines an auxiliary map
$\delta_D : B \to G \cup \{ -\infty \}$ by
$\delta_D(0) = -\infty$ and, given $x\in B\setminus\{0\}$,
$\delta_D(x) = \deg( Dx ) - \deg x$.  Note that
$$
\deg( Dx ) = \delta_D(x) + \deg x, \qquad\textrm{for all } x\in B.
$$
We also define $\delta_D( S ) \in G \cup \{ -\infty \}$
for certain subsets $S$ of $B$.
If $S$ is a nonempty subset of $B$ such that the subset
$U_S = \setspec{\delta_D(x)}{x\in S}$ of $G \cup \{ -\infty \}$
has a greatest element $M$, we define $\delta_D(S) = M$.
If $U_S$ does not have a greatest element, we leave $\delta_D(S)$ undefined.
We also define $\delta_D(\emptyset) = -\infty$.
Note in particular that $\delta_D(S)$ is defined for every finite
subset $S$ of $B$.
If $S_1, S_2$ are subsets of $B$ then the equality
``$\delta_D(S_1) = \delta_D(S_1)$'' is to be understood as meaning:
either both $\delta_D(S_1)$ and $\delta_D(S_1)$ are undefined, or
both are defined and are equal to the same element of $G \cup \{ -\infty \}$.
We also observe that the equality $\deg(D) = \delta_D(B)$ always holds
(i.e., either both sides are undefined, or both sides are defined
and are equal to the same element of $G \cup \{ -\infty \}$).

Define the transitive relation $\preceq_D$ on the powerset $\powerset(B)$
of $B$ by declaring that, for $S,S' \in \powerset(B)$,
$$
S \preceq_D S'\ \iff\  \forall_{ s \in S }\, \exists_{ s' \in S' }\,
\delta_D(s) \le \delta_D(s') .
$$
Then it is clear that
\begin{equation}  \label{Dkjfa;sdjf}
S \preceq_D S' \text{ and } S' \preceq_D S \ \implies\  
\delta_D(S) = \delta_D(S').
\end{equation}
Noting that $S \subseteq S'$ implies $S \preceq_D S'$,
we obtain the following useful special case of \eqref{Dkjfa;sdjf}:
\begin{equation}  \label{dlkjfasidiiii}
S \subseteq S' \text{ and } S' \preceq_D S \ \implies\  
\delta_D(S) = \delta_D(S').
\end{equation}
\end{parag}

\begin{definition}  \label{odfuipq90w389j}
Let $(B,G,\deg)$ be as in \ref{setup}.
By a \textit{$0$-subring\/} of $B$ we mean a subring $Z$
of $B$ such that $\deg(x)=0$ for all $x \in Z \setminus \{0\}$.
\end{definition}

\begin{lemma}  \label{dklfja;skldjf}
Let $(B,G,\deg)$ be as in \ref{setup}.
Let $D: B\to B$ be a derivation and $x_1,\dots,x_n\in B$.

\begin{enumerate}

\item $\delta_D(x_1x_2\cdots x_n)
\le \displaystyle \max_{1\le i\le n} \delta_D(x_i)$.

\item If $\deg( x_1+\dots+x_n ) = \displaystyle \max_{1\le i\le n} \deg(x_i)$,
then $ \delta_D( x_1+\dots+x_n )
\le \displaystyle \max_{1\le i\le n} \delta_D(x_i) $.

\item If $x_1, \dots, x_n \in Z$ for some $0$-subring $Z$ of $B$,
then $ \delta_D( x_1+\dots+x_n )
\le \displaystyle \max_{1\le i\le n} \delta_D(x_i) $.

%  \item If there exists $d\in G$ such that $\forall_i\, \deg(x_i)=d$,
%  then
%  $$
%  \sum_{1\le i\le n} \gr(x_i) = \begin{cases}
%  \gr( x_1+\dots+x_n ), & \text{if }  \deg( x_1+\dots+x_n ) =  d, \\
%  0, & \text{if }  \deg( x_1+\dots+x_n ) <  d.
%  \end{cases}
%  $$

\end{enumerate}
\end{lemma}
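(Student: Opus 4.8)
The plan is to prove the three parts in the order stated, with (2) and (3) resting on the axioms of a degree function and, in the case of (3), on (2).

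For part (1) I would induct on $n$; the case $n=1$ is trivial, so it suffices to treat $n=2$. If $x_1=0$ or $x_2=0$ then $x_1x_2=0$ and both sides are $-\infty$, so assume $x_1,x_2\neq0$, whence $\deg x_1,\deg x_2\in G$. The Leibniz rule $D(x_1x_2)=x_1\,Dx_2+x_2\,Dx_1$ together with axioms (2) and (3) gives
\[
\deg\bigl(D(x_1x_2)\bigr)\le\max\bigl(\deg x_1+\deg(Dx_2),\ \deg x_2+\deg(Dx_1)\bigr).
\]
Rewriting each term via $\deg(Dx_i)=\delta_D(x_i)+\deg x_i$, the right-hand side equals $\deg(x_1x_2)+\max\bigl(\delta_D(x_1),\delta_D(x_2)\bigr)$, and subtracting $\deg(x_1x_2)\in G$ yields the claim (the subcase $D(x_1x_2)=0$ being trivial since then $\delta_D(x_1x_2)=-\infty$).

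For part (2), set $s=x_1+\dots+x_n$; if $s=0$ there is nothing to prove, so assume $s\neq0$, so that $\deg s\in G$ and, by hypothesis, $\deg s=\max_i\deg x_i$. Applying axiom (3) to $Ds=\sum_iDx_i$ and then $\deg(Dx_i)=\delta_D(x_i)+\deg x_i$ gives
\[
\deg(Ds)\le\max_i\deg(Dx_i)=\max_i\bigl(\delta_D(x_i)+\deg x_i\bigr)\le\max_i\delta_D(x_i)+\max_i\deg x_i=\max_i\delta_D(x_i)+\deg s,
\]
and subtracting $\deg s$ gives $\delta_D(s)\le\max_i\delta_D(x_i)$. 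For part (3) note that $s=x_1+\dots+x_n$ lies in the subring $Z$; if $s=0$ we are done, and otherwise $s$ is a nonzero element of the $0$-subring $Z$, so $\deg s=0$, while every nonzero $x_i$ also satisfies $\deg x_i=0$; hence $\max_i\deg x_i=0=\deg s$ and part (2) applies verbatim.

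The one place a reader should pause is the middle inequality in the display for part (2), namely $\max_i(\delta_D(x_i)+\deg x_i)\le\max_i\delta_D(x_i)+\max_i\deg x_i$, which however is immediate from the termwise bound $\delta_D(x_i)+\deg x_i\le\max_j\delta_D(x_j)+\max_j\deg x_j$. Beyond that, the only real care needed throughout is bookkeeping with the value $-\infty$: one must check that each case in which a summand or factor vanishes is covered, and that the subtraction of $\deg(x_1x_2)$ or $\deg s$ is performed only when that quantity lies in $G$, which is exactly why the vanishing cases are dispatched first. None of this presents a genuine obstacle.
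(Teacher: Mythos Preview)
Your proof is correct and follows essentially the same route as the paper's: part~(1) is handled by Leibniz and induction on $n$, part~(2) by the chain of inequalities $\deg(Ds)\le\max_i\deg(Dx_i)=\max_i(\delta_D(x_i)+\deg x_i)\le\max_i\delta_D(x_i)+\max_i\deg x_i$, and part~(3) is deduced from~(2). Your treatment is in fact slightly more explicit than the paper's about the $-\infty$ bookkeeping and about why~(3) follows from~(2), but there is no substantive difference in method.
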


\begin{proof}
We write $\delta = \delta_D$.
Given $x,y\in B\setminus\{0\}$,
\begin{align*}
\delta(xy) &= \deg (D(xy))-\deg (xy) = \deg (yDx+xDy)-\deg (xy) \\
&\le  \max(\deg (yDx), \deg (xDy)) - \deg (xy) \\
&= \max(\deg (Dx)+\deg (y), \deg (Dy)+\deg (x)) - \deg (xy) =
\max(\delta(x),\delta(y));
\end{align*}
assertion~(1) follows by induction.

If $\deg( x_1+\dots+x_n ) = \max_{1\le i\le n} \deg(x_i)$ then
\begin{align*}
\deg D({\textstyle \sum_i} x_i)
= \deg ({\textstyle \sum_i} D x_i)
\le \max_i (\deg Dx_i)
= \max_i (\delta(x_i) + \deg x_i) \\
\le \max_i \delta(x_i) +\max_i \deg x_i
= \max_i \delta(x_i) +\deg({\textstyle \sum_i} x_i),
\end{align*}
so assertion~(2) holds.  Assertion~(3) immediately follows.
% Assertion~(4) is well-known, and easy.
% 
% Assume that $\deg(x_i) = d$  holds for each $i$  ($1\le i\le n$)
% and let $\pi : B_d \to B_d/B_{d^-}$ be the canonical epimorphism,
% where $B_d = \setspec{ x \in B }{ \deg(x) \le d }$
% and $B_{d^-} = \setspec{ x \in B }{ \deg(x) < d }$.
% Then
% \begin{multline*}
% \sum_{1\le i\le n} \gr(x_i)
% = \sum_{1\le i\le n} \pi(x_i)\\
% = \pi \bigg( \sum_{1\le i \le n} x_i \bigg)
% = \begin{cases}
% \gr( x_1+\dots+x_n ), & \text{if }  \deg( x_1+\dots+x_n ) =  d, \\
% 0, & \text{if }  \deg( x_1+\dots+x_n ) <  d.
% \end{cases}
% \end{multline*}
% So assertion~(4) holds.
\end{proof}

\begin{lemma}  \label{difjasdkjfa;klsd}
Let $(B,G,\deg)$ be as in \ref{setup} and
let $A \subseteq Z$ be $0$-subrings of $B$.
Suppose that $S$ is a subset of $Z$ such that $Z$ is algebraic
over $A[S]$.
Then, for all $D \in \Der_A(B)$, $\delta_D(Z) = \delta_D(S)$.
\end{lemma}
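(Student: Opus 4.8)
The plan is to reduce the asserted equality $\delta_D(Z) = \delta_D(S)$ to the single relation $Z \preceq_D S$: since $S \subseteq Z$ gives $S \preceq_D Z$ automatically, the implication \eqref{Dkjfa;sdjf} then yields $\delta_D(Z) = \delta_D(S)$. (The degenerate case $S = \emptyset$ falls outside the $\preceq_D$ formalism and will be disposed of separately.) Two elementary facts will be used throughout: since $D$ is an $A$-derivation, $\delta_D(a) = -\infty$ for every $a \in A$; and since $Z$ is a $0$-subring, $\deg(x) = 0$ for every $x \in Z \setminus \{0\}$ (note $A[S]$ is then a $0$-subring as well).

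First I would record the reduction from $A[S]$ to $S$: for each nonzero $f \in A[S]$ there is a finite subset $S' \subseteq S$ with $\delta_D(f) \le \delta_D(S')$. Indeed, $f$ is a finite sum of terms of the form $\lambda s_1 \cdots s_r$ with $\lambda \in A$ and $s_1,\dots,s_r \in S$; Lemma~\ref{dklfja;skldjf}(1) combined with $\delta_D(\lambda) = -\infty$ bounds $\delta_D$ of each such term by $\max_j \delta_D(s_j)$, and then Lemma~\ref{dklfja;skldjf}(3), applied to the sum (all of whose terms lie in the $0$-subring $A[S] \subseteq Z$), bounds $\delta_D(f)$ by the largest of these, i.e.\ by $\delta_D(S')$ where $S'$ is the finite set of all the $s_j$ that occur. (If $f \in A$ we may take $S' = \emptyset$.)

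The core of the argument is the algebraic step. Let $z \in Z \setminus \{0\}$. Since $z$ is algebraic over $A[S]$, among the nonzero polynomials in $A[S][T]$ annihilating $z$ I choose one, $p(T) = \sum_{i=0}^n b_i T^i$, of least $T$-degree; minimality forces $n \ge 1$, and since $\Char B = 0$ it forces $p'(z) \neq 0$ (otherwise $p'$ would be a nonzero annihilating polynomial of smaller degree). Applying $D$ to $p(z) = 0$ gives $p'(z)\, Dz = -R$, where $R = \sum_{i=0}^n (Db_i) z^i$. The crucial point is that $p'(z) \in A[S][z] \subseteq Z$ is nonzero, hence $\deg p'(z) = 0$; consequently either $R = 0$ and $Dz = 0$, or $R \neq 0$ and, using $\deg z = 0 = \deg b_i$,
\[
\delta_D(z) = \deg(Dz) - \deg z = \deg R \le \max_{b_i \neq 0} \deg(Db_i) = \max_{b_i \neq 0} \delta_D(b_i) ,
\]
so that $\delta_D(z) \le \delta_D(b_{i_0})$ for some $b_{i_0} \in A[S] \setminus \{0\}$.

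Combining the two steps, for every $z \in Z$ either $\delta_D(z) = -\infty$ or $\delta_D(z) \le \delta_D(b_{i_0}) \le \delta_D(S')$ for some finite $S' \subseteq S$, hence $\delta_D(z) \le \delta_D(s)$ for some $s \in S$ provided $S \neq \emptyset$ (the $-\infty$ cases being absorbed into any fixed element of $S$); this is precisely $Z \preceq_D S$, and \eqref{Dkjfa;sdjf} finishes that case. If $S = \emptyset$, then $A[S] = A$, so every annihilating polynomial of $z$ has coefficients in $A$, forcing $R = 0$ and $Dz = 0$ for all $z \in Z$; thus $\delta_D(Z) = -\infty = \delta_D(\emptyset)$. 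The main obstacle is the algebraic step, and what makes it go through is exactly that both $z$ and $p'(z)$ lie in the $0$-subring $Z$ and therefore have degree $0$ — this collapses the degree bookkeeping and prevents $\delta_D(z)$ from reappearing on the right-hand side; securing a relation with $p'(z) \neq 0$ is the secondary subtlety, and is where characteristic zero is used.
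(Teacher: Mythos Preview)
Your proof is correct and follows essentially the same route as the paper's: first reduce $A[S]$ to $S$ using parts (1) and (3) of Lemma~\ref{dklfja;skldjf}, then for each $z\in Z$ apply $D$ to a minimal-degree algebraic relation over $A[S]$ and use $\deg p'(z)=0$ (with $\Char B=0$ ensuring $p'(z)\neq0$) to bound $\delta_D(z)$ by the $\delta_D$ of some coefficient, concluding via $Z\preceq_D S$ and \eqref{Dkjfa;sdjf}. Your treatment is slightly more explicit about the boundary cases $S=\emptyset$ and $R=0$, but otherwise matches the paper's argument step for step.
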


\begin{proof}
Let $D \in \Der_A(B)$ and let $\delta = \delta_D$.
Consider a product
\begin{equation}  \label{difjaposiduf;}
\mu = a x_1 \cdots x_n
\quad \text{(with $a \in A$ and $x_1, \dots, x_n \in S$)} .
\end{equation}
As $\delta(a) = -\infty$, we have
$\delta(a x_1 \cdots x_n) \le
\max \big( \delta(a), \delta(x_1), \dots, \delta(x_n) \big)
= \max_i \delta(x_i)$ by \ref{dklfja;skldjf}, so
$\delta(\mu) \le \delta(s)$ for some $s \in S$.
Now consider an element $\xi \in A[S]$.
Then $\xi$ is a finite sum, $\xi = \mu_1 + \cdots + \mu_m$,
where each $\mu_i$ is a product of the form \eqref{difjaposiduf;};
so, for each $i \in \{ 1, \dots, m \}$, there exists $s_i \in S$
such that $\delta( \mu_i ) \le \delta(s_i)$;
consequently we may choose $s \in S$ such that 
$\delta( \mu_i ) \le \delta(s)$ holds for all $i \in \{ 1, \dots, m \}$.
As $\mu_1, \dots, \mu_m \in Z$, part~(3) of \ref{dklfja;skldjf}
gives $\delta( \xi ) \le \max_i \delta( \mu_i )$,
so $\delta( \xi ) \le \delta( s )$.
This shows that $A[S] \preceq_D S$.

Let $b \in Z \setminus \{0\}$.
As $b$ is algebraic over $R = A[S]$, we may choose
a polynomial $\Phi(T) = \sum_i r_iT^i \in R[T]\setminus\{0\}$
(where $T$ is an indeterminate and $r_i \in R$)
of minimal degree such that $\Phi(b)=0$.
Then $0=D\Phi(b)=\Phi^{(D)}(b)+\Phi'(b)Db$ and (using $\Char B=0$)
$\Phi'(b)\in Z \setminus\{0\}$ imply
$\deg(\Phi^{(D)}(b)) = \deg(Db) = \delta(b)$. Now 
$\Phi^{(D)}(b)=\sum_i D(r_i)b^i$, so
$$
\delta(b) = \deg(\Phi^{(D)}(b))
= \deg \big( \sum_i D(r_i)b^i \big)
\le \max_i \deg \big( D(r_i)b^i \big)
$$
 and
$\deg(D(r_i)b^i) = \deg(Dr_i)=\delta(r_i)$ for each $i$,
so $\delta(b) \le \max_i \delta(r_i)$.
It follows that there exists $r \in A[S]$ such that $\delta(b) \le \delta(r)$,
i.e., we have shown that $Z \preceq_D A[S]$.
We get $Z \preceq_D S$ by transitivity
and $\delta(Z) = \delta(S)$ by assertion~\eqref{dlkjfasidiiii}
of \ref{dkfjasiodufpao}.
\end{proof}

%  \begin{corollary}  \label{dklfja;sdkfj}
%  Let $(B,G,\deg)$ be as in \ref{setup},
%  let $A$ be a $0$-subring of $B$ % over which $B$ has finite transcendence degree
%  and let $D \in \Der_A(B)$.
%  Then $\delta_D(Z)$ is defined for all $0$-subrings $Z$ of $B$ satisfying
%  $A \subseteq Z$ and $\trdeg_A(Z) < \infty$.
%  \end{corollary}
%  
%  \begin{proof}
%  We can choose a finite subset $S$ of $Z$ such that $Z$ is algebraic
%  over $A[S]$.  Then $\delta_D(S)$ is defined and, by \ref{difjasdkjfa;klsd},
%  $\delta_D(Z) = \delta_D(S)$; so $\delta_D(Z)$ is defined.
%  \end{proof}

\begin{proposition}  \label{dkfjaiosdfupaio}
Let $G$ be a totally ordered abelian group,
$B = \bigoplus_{i\in G} B_i$ a $G$-graded integral domain of
characteristic zero and
$\deg : B \to G \cup \{ -\infty \}$ the degree function determined
by the grading.
Assume that $B$ is finitely generated as a $B_0$-algebra and 
let $A$ be a subring of $B_0$ satisfying $\trdeg_A(B_0) < \infty$.
Then $\deg$ is tame over $A$.
% Then $\deg(D)$ is defined for all $D \in \Der_A(B)$.

More precisely,
given any choice of $z_1,\dots,z_m\in B_0$
and homogeneous $x_1,\dots,x_n\in B$
such that $B_0$ is algebraic over $A[ z_1, \dots, z_m ]$ and 
$ B = B_0 [ x_1, \dots, x_n ] $,
$$
\deg(D)= \max \{
\delta_D(z_1), \dots, \delta_D(z_m),  \delta_D(x_1), \dots, \delta_D(x_n) \}
\quad \text{for all $D \in \Der_A(B)$.}
$$
\end{proposition}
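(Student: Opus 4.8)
The plan is to derive everything from Lemmas~\ref{dklfja;skldjf} and~\ref{difjasdkjfa;klsd}, using the grading to handle a possible cancellation. First I would note that a choice of $z_i$ and $x_j$ as in the ``more precisely'' part always exists: since $B$ is a finitely generated $B_0$-algebra and each generator is a finite sum of its homogeneous components, one may take $x_1,\dots,x_n$ to be the homogeneous components of a finite generating set, so that $B=B_0[x_1,\dots,x_n]$ with all $x_j$ homogeneous; and since $\trdeg_A(B_0)<\infty$ one may pick $\{z_1,\dots,z_m\}\subseteq B_0$ maximal among finite subsets of $B_0$ that are algebraically independent over $A$, so that $B_0$ is algebraic over $A[z_1,\dots,z_m]$. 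Since the displayed formula presents $\deg(D)$ as an element of $G\cup\{-\infty\}$, it implies that $\deg$ is tame over $A$; so it is enough to prove the formula for an arbitrary admissible choice. Fix then $D\in\Der_A(B)$, write $\delta=\delta_D$, and set $M=\max\{\delta(z_1),\dots,\delta(z_m),\delta(x_1),\dots,\delta(x_n)\}$. One inequality is trivial, since $M$ is attained by one of the $z_i$ or $x_j$ (and if $M=-\infty$ then $D=0$ and there is nothing to do); so the whole point is to show $\delta(b)\le M$ for every $b\in B\setminus\{0\}$, which then gives $\deg(D)=\delta_D(B)=M$.

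The first step is the case $b\in B_0\setminus\{0\}$. Here $A\subseteq B_0$ are $0$-subrings of $B$ (as $\deg$ is determined by the grading, every nonzero element of $B_0$ has degree $0$), and $B_0$ is algebraic over $A[z_1,\dots,z_m]$, so Lemma~\ref{difjasdkjfa;klsd} applies and yields $\delta_D(B_0)=\delta_D(\{z_1,\dots,z_m\})=\max_{1\le i\le m}\delta(z_i)\le M$; in particular $\delta(c)\le M$ for all $c\in B_0\setminus\{0\}$.

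For a general $b\in B\setminus\{0\}$, write $b=\sum_{\alpha\in F}c_\alpha x^\alpha$ with $F$ a finite set of exponent vectors, $c_\alpha\in B_0\setminus\{0\}$, and $x^\alpha=x_1^{\alpha_1}\cdots x_n^{\alpha_n}$. Each monomial $\mu_\alpha:=c_\alpha x^\alpha$ is homogeneous (a product of homogeneous elements, $c_\alpha$ having degree $0$), and Lemma~\ref{dklfja;skldjf}(1), applied to the product $c_\alpha\cdot x_1^{\alpha_1}\cdots x_n^{\alpha_n}$ and then to the powers $x_j^{\alpha_j}$, gives $\delta(\mu_\alpha)\le\max\bigl(\delta(c_\alpha),\delta(x_1),\dots,\delta(x_n)\bigr)\le M$ by the first step. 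The main obstacle is that one cannot feed $b=\sum_\alpha\mu_\alpha$ directly into Lemma~\ref{dklfja;skldjf}(2), because cancellation among the $\mu_\alpha$ may make $\deg(b)$ strictly smaller than $\max_\alpha\deg(\mu_\alpha)$, so the hypothesis of part~(2) fails. I would get around this by passing through homogeneous components: for $d\in G$ let $b_{[d]}=\sum\{\mu_\alpha:\alpha\in F,\ \deg(\mu_\alpha)=d\}\in B_d$, so $b=\sum_d b_{[d]}$ is a finite sum. For each $d$ with $b_{[d]}\ne0$ one has $\deg(b_{[d]})=d=\max\{\deg(\mu_\alpha):\deg(\mu_\alpha)=d\}$, so Lemma~\ref{dklfja;skldjf}(2) now does apply and gives $\delta(b_{[d]})\le\max\{\delta(\mu_\alpha):\deg(\mu_\alpha)=d\}\le M$. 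Finally $b=\sum_d b_{[d]}$ is a sum of nonzero homogeneous elements of pairwise distinct degrees, whence $\deg(b)=\max_d\deg(b_{[d]})$, and one last application of Lemma~\ref{dklfja;skldjf}(2) gives $\delta(b)\le\max_d\delta(b_{[d]})\le M$. This proves $\delta(b)\le M$ for all nonzero $b$, completing the argument.
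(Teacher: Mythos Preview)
Your proof is correct and follows essentially the same approach as the paper's: reduce to $B_0$ via Lemma~\ref{difjasdkjfa;klsd}, bound each monomial $c_\alpha x^\alpha$ via Lemma~\ref{dklfja;skldjf}(1), and use homogeneity to make Lemma~\ref{dklfja;skldjf}(2) applicable. The only cosmetic difference is the order in which you decompose---you write $b$ as a sum of monomials and then group them by degree, whereas the paper first passes to the homogeneous components of $b$ and then expresses each component as a sum of monomials---but the underlying argument is identical.
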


\begin{proof}
Let $z_1,\dots,z_m\in B_0$ and $x_1,\dots,x_n\in B$ be as in the statement.
Let $D \in \Der_A(B)$ and let $\delta=\delta_D$.
Define
$M = \max \{
\delta_D(z_1), \dots, \delta_D(z_m),  \delta_D(x_1), \dots, \delta_D(x_n) \}$
(so $M \in G \cup \{ -\infty \}$).
It suffices to show that $\delta(x) \le M$ for all $x \in B \setminus\{0\}$.
Indeed, if this is true then $\deg(D)=M$.

Result \ref{difjasdkjfa;klsd} (with $S = \{ z_1, \dots, z_m \}$ and $Z = B_0$)
implies that $\delta(B_0) = \max_{1 \le i \le m} \delta( z_i )$,
so $\delta(b) \le M$ certainly holds for all $b \in B_0$.

Let $x \in B \setminus \{0\}$. 
Then $x$ is a finite sum, $x = h_1 + \cdots + h_m$,
where each $h_i$ is homogeneous and $\deg(h_1)< \dots < \deg(h_m)$.
Then $\deg( h_1 + \cdots + h_m ) = \max_i \deg( h_i )$,
so part~(2) of \ref{dklfja;skldjf} implies that 
$\delta( h_1 + \cdots + h_m ) \le \max_i \delta( h_i )$.
So it's enough to show that $\delta( h_i ) \le M$ for all $i$,
i.e., we may assume that $x$ is homogeneous.

Suppose that $x \in B_d \setminus \{0\}$, for some $d \in G$.
Then $x$ is a finite sum, $x = \mu_1 + \cdots + \mu_m$,
where each $\mu_i\in B_d \setminus \{0\}$ is a monomial of the form
$ \mu_i = b_i x_1^{e_{i1}} \dots x_n^{e_{in}} $
with $b_i \in B_0$ and $e_{ij} \in \Nat$.
We have $\deg( \mu_1 + \cdots + \mu_m ) = \max_i \deg( \mu_i )$,
so part~(2) of \ref{dklfja;skldjf} implies that 
$\delta(x) = \delta( \mu_1 + \cdots + \mu_m ) \le \max_i \delta( \mu_i )$.
So it's enough to show that $\delta( \mu_i ) \le M$ for all $i$.
Part~(1) of \ref{dklfja;skldjf} gives
$\delta(\mu_i) \le
\max\big( \delta( b_i), \delta( x_1),  \dots, \delta(x_n) \big)$,
so $\delta(\mu_i) \le M$ and we are done.
\end{proof}

\begin{corollary}  \label{dp9r8123849q23kjd}
Let $G$ be a totally ordered abelian group,
$B = \bigoplus_{i\in G} B_i$ a $G$-graded integral domain
of characteristic zero and
$\deg : B \to G \cup \{ -\infty \}$ the degree function determined
by the grading.
Assume:
\begin{enumerate}

\item $B$ has finite transcendence degree over a field $\bk$

\item $B$ is finitely generated as a $B_0$-algebra.

\end{enumerate}
Then $\deg$ is tame over $\bk$.

More precisely,
given any choice of $z_1,\dots,z_m\in B_0$ and homogeneous
$x_1,\dots,x_n\in B$
such that $B_0$ is algebraic over $\bk[ z_1, \dots, z_m ]$ and 
$ B = B_0 [ x_1, \dots, x_n ] $,
$$
\deg(D)= \max \{
\delta_D(z_1), \dots, \delta_D(z_m),  \delta_D(x_1), \dots, \delta_D(x_n) \}
\quad \text{for all $D \in \Der_\bk(B)$.}
$$
\end{corollary}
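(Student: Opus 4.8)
The plan is to derive Corollary~\ref{dp9r8123849q23kjd} from Proposition~\ref{dkfjaiosdfupaio} by exhibiting a subring $A \subseteq B_0$ to which the Proposition applies and for which $\bk$-derivations are automatically $A$-derivations. The obvious candidate is $A = \bk$ itself, so the main point is simply to check that the hypotheses of \ref{dkfjaiosdfupaio} are met with this choice, and that the conclusion then transfers verbatim.

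First I would observe that $\bk$ is a subring of $B_0$: since $\deg$ is determined by the grading and $\bk$ is a field, every nonzero $\lambda \in \bk$ is a unit of $B$, hence $\deg(\lambda) = 0$, so $\bk \subseteq B_0$. (Alternatively, one can note that $B_0$ is exactly the degree-$\le 0$ subring together with the fact that units have degree $0$.) Next, hypothesis~(2) of the Corollary is literally hypothesis ``$B$ finitely generated as a $B_0$-algebra'' of the Proposition, so nothing is needed there. For the transcendence-degree hypothesis: by~(1) we have $\trdeg_\bk(B) < \infty$, and since $\bk \subseteq B_0 \subseteq B$, it follows that $\trdeg_\bk(B_0) \le \trdeg_\bk(B) < \infty$. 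Thus with $A = \bk$ the triple satisfies all the hypotheses of \ref{dkfjaiosdfupaio}, which gives that $\deg$ is tame over $\bk$ and supplies the displayed formula for $\deg(D)$ whenever $D \in \Der_\bk(B)$.

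Finally I would handle the ``more precisely'' clause: given $z_1,\dots,z_m \in B_0$ and homogeneous $x_1,\dots,x_n \in B$ with $B_0$ algebraic over $\bk[z_1,\dots,z_m]$ and $B = B_0[x_1,\dots,x_n]$, these are exactly the kind of data that \ref{dkfjaiosdfupaio} asks for (with $A = \bk$), so the formula
$$
\deg(D) = \max\{ \delta_D(z_1),\dots,\delta_D(z_m),\delta_D(x_1),\dots,\delta_D(x_n) \}
$$
holds for all $D \in \Der_\bk(B)$ directly from the Proposition. This is a one-line specialization, so I do not anticipate any genuine obstacle; the only thing to be careful about is the very first step, namely justifying rigorously that $\bk \subseteq B_0$ — but this is immediate from the fact that a degree function determined by a grading sends units (in particular nonzero field elements) to $0$.
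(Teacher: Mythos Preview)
Your approach is exactly the paper's: the paper's entire proof is the single sentence ``As $\bk$ is necessarily included in $B_0$, this is \ref{dkfjaiosdfupaio} with $A=\bk$.'' So the strategy and the conclusion are right.

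There is, however, a small but genuine slip in your justification of $\bk \subseteq B_0$. It is \emph{not} true that a degree function determined by a grading sends units to $0$: in $\bk[t,t^{-1}]$ with its standard $\Integ$-grading, $t$ is a unit of degree $1$. What is true in a $G$-graded integral domain is that every unit is \emph{homogeneous} (compare top and bottom homogeneous components of $uv=1$). Now take $\lambda \in \bk^*$; it is a unit, hence homogeneous of some degree $d$. If $d \neq 0$, then $1+\lambda$ has nonzero homogeneous components in degrees $0$ and $d$, so it is not homogeneous; but $1+\lambda \in \bk$, and if $1+\lambda \neq 0$ it is again a unit, hence homogeneous --- contradiction. (The case $\lambda=-1$ is immediate since $-1 = -(1) \in B_0$.) Thus $\bk \subseteq B_0$. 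Once this is fixed, your argument goes through verbatim.
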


\begin{proof}
As $\bk$ is necessarily included in $B_0$, this is \ref{dkfjaiosdfupaio}
with $A=\bk$.
\end{proof}

The next two results are consequences of \ref{dp9r8123849q23kjd}.

\begin{corollary}  \label{dif9w8er093409u}
Let $\bk$ be a field of characteristic zero,
$B$ a $\bk$-affine integral domain and 
$G$ a totally ordered abelian group.
If $\deg : B \to G \cup \{ -\infty \}$ is the degree function
determined by some $G$-grading of $B$, then $\deg$ is tame over $\bk$.

More precisely,
given any choice of homogeneous elements $x_1,\dots,x_n\in B$
satisfying $ B = \bk [ x_1, \dots, x_n ] $,
we have $\deg(D) = \max_{1 \le i \le n} \delta_D( x_i )$
for all $D \in \Der_\bk(B)$.
\end{corollary}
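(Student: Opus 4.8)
The plan is to reduce everything to Corollary~\ref{dp9r8123849q23kjd} (applied with $A=\bk$). First I would verify its hypotheses. Since $B$ is $\bk$-affine it has finite transcendence degree over $\bk$, and writing a finite set of $\bk$-algebra generators of $B$ as sums of their homogeneous components shows that $B$ is generated over $\bk$ by finitely many homogeneous elements; in particular homogeneous $x_1,\dots,x_n$ with $B=\bk[x_1,\dots,x_n]$ exist, and I take the $x_i$ to be those given in the statement. Because $1\in B_0$ we have $\bk\subseteq B_0$, so $B=\bk[x_1,\dots,x_n]=B_0[x_1,\dots,x_n]$ is finitely generated as a $B_0$-algebra. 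Thus conditions (1) and (2) of \ref{dp9r8123849q23kjd} both hold.

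Next I would produce suitable auxiliary elements $z_1,\dots,z_m$. The ring $B_0$ is generated as a $\bk$-algebra by the monomials $x_1^{a_1}\cdots x_n^{a_n}$ of degree $0$, hence $\Frac(B_0)$ is generated over $\bk$ by these monomials, so among them one may select a transcendence basis $z_1,\dots,z_m$ of $\Frac(B_0)/\bk$; this set is finite since $\trdeg_\bk(B_0)\le\trdeg_\bk(B)<\infty$, and $B_0$ is algebraic over $\bk(z_1,\dots,z_m)$, hence over $\bk[z_1,\dots,z_m]$. Corollary~\ref{dp9r8123849q23kjd} then gives, for every $D\in\Der_\bk(B)$,
$$
\deg(D)=\max\{\delta_D(z_1),\dots,\delta_D(z_m),\delta_D(x_1),\dots,\delta_D(x_n)\},
$$
which is in particular defined, so $\deg$ is tame over $\bk$.

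Finally I would match this with the claimed formula $\deg(D)=\max_{1\le i\le n}\delta_D(x_i)$. The inequality $\ge$ is immediate, since the $\delta_D(x_i)$ already occur in the displayed maximum; for $\le$ it suffices to observe that each $z_j$ is a product of some of the $x_i$, so part~(1) of Lemma~\ref{dklfja;skldjf} yields $\delta_D(z_j)\le\max_{1\le i\le n}\delta_D(x_i)$, and the $z_j$-terms contribute nothing new. (Alternatively, starting from an arbitrary transcendence basis of $B_0/\bk$ and expanding each $z_j\in B_0$ as a $\bk$-linear combination of degree-$0$ monomials in the $x_i$, parts~(1) and~(2) of \ref{dklfja;skldjf} give the same bound.) I do not expect a genuine obstacle: this corollary is essentially the case $A=\bk$ of \ref{dp9r8123849q23kjd}, and the only point requiring a little care is choosing the auxiliary generators $z_j$ of $B_0$ so that their $\delta_D$-values are dominated by those of the $x_i$, which is precisely what makes the clean formula $\deg(D)=\max_i\delta_D(x_i)$ emerge.
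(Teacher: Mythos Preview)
Your proposal is correct and follows essentially the same route as the paper's proof: apply Corollary~\ref{dp9r8123849q23kjd} with $A=\bk$, choosing the auxiliary $z_j\in B_0$ to be degree-$0$ monomials in the $x_i$, and then use part~(1) of Lemma~\ref{dklfja;skldjf} to absorb the $\delta_D(z_j)$ into $\max_i\delta_D(x_i)$. You supply a bit more justification for why such $z_j$ exist (observing that $B_0$ is $\bk$-spanned by degree-$0$ monomials and extracting a transcendence basis), but the argument is otherwise identical.
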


\begin{proof}
Fix a grading $B = \bigoplus_{i \in G} B_i$ which determines $\deg$
and note that $\bk \subseteq B_0$.
Given homogeneous elements $x_1,\dots,x_n\in B$
satisfying $ B = \bk [ x_1, \dots, x_n ] $, it is certainly the case
that $ B = B_0 [ x_1, \dots, x_n ] $.
We may also choose $z_1, \dots, z_m \in B_0$ such that each $z_i$
is a monomial of the form $x_1^{e_{i1}} \cdots x_n^{e_{in}}$
($e_{ij} \in \Nat$) and $B_0$ is algebraic over $\bk[ z_1, \dots, z_m ]$.
Let $D \in \Der_\bk(B)$, then
$\deg(D)= \max \{
\delta_D(z_1), \dots, \delta_D(z_m),  \delta_D(x_1), \dots, \delta_D(x_n) \}$
by \ref{dp9r8123849q23kjd}.
Part~(1) of \ref{dklfja;skldjf} gives
$\delta_D(z_i) \le \max_{ 1 \le j \le n } \delta_D(x_j)$,
so $\deg(D) = \max_{1 \le i \le n} \delta_D( x_i )$.
\end{proof}

\begin{corollary}  \label{dddddidufqwiejll}
Let $R$ be a domain of finite transcendence degree over a field $\bk$
of characteristic zero and let $B = R[X_1, \dots, X_n] = R^{[n]}$.
Let $G$ be a totally ordered abelian group and define a $G$-grading
on $B$ by choosing $(d_1, \dots, d_n) \in G^n$ and  declaring that
the elements of $R \setminus \{0\}$ are homogeneous of degree $0$ and that
(for each $i$) $X_i$ is homogeneous of degree $d_i$.
Let $\deg : B \to G \cup \{ -\infty \}$ be the degree function determined by
this grading.
Then $\deg$ is tame over $\bk$.

More precisely, if $z_1, \dots, z_m \in R$ are such that $R$ is algebraic
over $\bk[ z_1, \dots, z_m ]$, then
$ \deg(D)= \max \{
\delta_D(z_1), \dots, \delta_D(z_m),  \delta_D(X_1), \dots, \delta_D(X_n) \}$
for every $D \in \Der_\bk(B)$.
\end{corollary}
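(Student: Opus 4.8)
The plan is to follow the pattern of the proof of Proposition~\ref{dkfjaiosdfupaio}, adapted to the special shape of $B$ here; the point is that $R$, rather than the full degree-zero subring $B_0$, already controls everything. Fix $D\in\Der_\bk(B)$, write $\delta=\delta_D$, and set
$$
M=\max\{\delta(z_1),\dots,\delta(z_m),\delta(X_1),\dots,\delta(X_n)\}\in G\cup\{-\infty\}.
$$
Since each $z_i$ and each $X_j$ lies in $B$, it suffices to prove that $\delta(b)\le M$ for every $b\in B\setminus\{0\}$: once this is known, $\deg(D)=\delta(B)$ is defined and equals $M$, and as $D$ was arbitrary this gives both tameness over $\bk$ and the displayed formula.

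The first step is to dispose of the coefficients. By construction of the grading, every nonzero element of $R$ is homogeneous of degree $0$, so $\bk$ and $R$ are $0$-subrings of $B$ with $\bk\subseteq R$. Since $R$ is algebraic over $\bk[z_1,\dots,z_m]$, Lemma~\ref{difjasdkjfa;klsd} applied with $A=\bk$, $Z=R$, $S=\{z_1,\dots,z_m\}$ gives $\delta(R)=\max_{1\le i\le m}\delta(z_i)$; in particular $\delta(r)\le M$ for every $r\in R\setminus\{0\}$.

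The second step is a double reduction. Let $b\in B\setminus\{0\}$. Writing $b$ as the sum of its finitely many nonzero homogeneous components, which have pairwise distinct degrees, we have $\deg(b)=\max$ of those degrees, so part~(2) of Lemma~\ref{dklfja;skldjf} bounds $\delta(b)$ by the maximum of $\delta$ over the components; hence we may assume $b$ is homogeneous. A nonzero homogeneous $b$ is a sum of distinct monomials $r\,X_1^{a_1}\cdots X_n^{a_n}$ with $r\in R\setminus\{0\}$, each of degree equal to $\deg(b)$ (the degree of $r$ being $0$), so part~(2) of Lemma~\ref{dklfja;skldjf} again lets us assume $b=r\,X_1^{a_1}\cdots X_n^{a_n}$ is a single such monomial. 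Finally, part~(1) of Lemma~\ref{dklfja;skldjf} gives $\delta(b)\le\max\big(\delta(r),\delta(X_1),\dots,\delta(X_n)\big)\le M$ by the first step, and we are done.

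There is no real obstacle; the only things to get right are that the grading makes $R$ a $0$-subring (so that Lemma~\ref{difjasdkjfa;klsd} applies with $A=\bk$) and that in each application of part~(2) of Lemma~\ref{dklfja;skldjf} the degree of the sum genuinely equals the maximum of the degrees of the summands — which is automatic because the summands are homogeneous with either pairwise distinct degrees or a common degree equal to $\deg(b)$. If one instead prefers to invoke Corollary~\ref{dp9r8123849q23kjd} directly, one notes that the degree-zero monomials in $X_1,\dots,X_n$ form a finitely generated monoid (Gordan's lemma, since the totally ordered group $G$ is torsion-free), so $B_0=R[w_1,\dots,w_\ell]$ for suitable monomials $w_j$; then $B_0$ is algebraic over $\bk[z_1,\dots,z_m,w_1,\dots,w_\ell]$, Corollary~\ref{dp9r8123849q23kjd} applies, and part~(1) of Lemma~\ref{dklfja;skldjf} eliminates the terms $\delta(w_j)$ from the resulting maximum.
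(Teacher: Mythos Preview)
Your main argument is correct and is a genuinely different route from the paper's. The paper does not rerun the proof of Proposition~\ref{dkfjaiosdfupaio}; instead it invokes Corollary~\ref{dp9r8123849q23kjd} as a black box. To do so it must supply generators of $B_0$ up to algebraicity: it picks finitely many degree-zero monomials $\xi_1,\dots,\xi_N$ in the $X_i$ such that $B_0$ is algebraic over $R[\xi_1,\dots,\xi_N]$ (this needs only $\trdeg_\bk B<\infty$, no Gordan's lemma), hence over $\bk[z_1,\dots,z_m,\xi_1,\dots,\xi_N]$, applies Corollary~\ref{dp9r8123849q23kjd}, and then drops the $\delta_D(\xi_j)$ via Lemma~\ref{dklfja;skldjf}(1). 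Your direct approach sidesteps $B_0$ entirely by observing that each homogeneous element of $B$ is already an $R$-linear combination of monomials of that common degree, so Lemma~\ref{difjasdkjfa;klsd} with $Z=R$ is all that is needed to control the coefficients; this is slightly more elementary and makes transparent why $R$ (and not $B_0$) suffices. Your alternative sketch is essentially the paper's proof, except that the appeal to Gordan's lemma is stronger than necessary: the paper only needs $B_0$ to be \emph{algebraic} over $R[\xi_1,\dots,\xi_N]$, which follows from finite transcendence degree alone, not that $B_0$ be finitely generated over $R$.
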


\begin{proof}
Let $B = \bigoplus_{ i \in G } B_i$ be the grading and choose
$\xi_1, \dots, \xi_N \in B_0$ such that each $\xi_i$
is a monomial of the form $X_1^{e_{i1}} \cdots X_n^{e_{in}}$
($e_{ij} \in \Nat$) and $B_0$ is algebraic over $R[ \xi_1, \dots, \xi_N ]$;
then $B_0$ is algebraic over $\bk[ z_1, \dots, z_m, \xi_1, \dots, \xi_N ]$
and $B = B_0[ X_1, \dots, X_n ]$.
If $D \in \Der_\bk(B)$ then, by \ref{dp9r8123849q23kjd},
$$
\deg(D) = \max\{ 
\delta_D( z_1 ), \dots, \delta_D( z_m ), 
\delta_D( \xi_1 ), \dots, \delta_D( \xi_N ), 
\delta_D( X_1 ), \dots, \delta_D( X_n ) \}.
$$
Part~(1) of \ref{dklfja;skldjf} gives
$\delta_D(\xi_i) \le \max_{ 1 \le j \le n } \delta_D(X_j)$,
so we are done.
\end{proof}

Paragraphs \ref{dfqp29w3eoawjkk} and \ref{difuq90weukj}
are simple observations about localization of degree functions.
These facts are used in the proofs of\footnote{Lemma \ref{difuq90weukj}
would also be used for proving \ref{dfjqio23ejqlkmskkkd},
but this proof is omitted.}
\ref{doifq90348rqiojkdf},
\ref{difu;awesja;lk;lj} and \ref{d9320495823erfrj}.
Note that \ref{difuq90weukj} appeared in \cite{NurThesis}.

\begin{parag}  \label{dfqp29w3eoawjkk}
Let $B$ be a domain, $G$ a totally ordered abelian group and
$\deg : B \to G \cup \{ -\infty \}$ a degree function.
If $S \subseteq B \setminus \{0\}$ is a multiplicative set,
then $\deg$ has a unique extension to a degree function
$\DEG : S^{-1}B \to G \cup \{ -\infty \}$.
% $$
% \DEG : S^{-1}B \to G \cup \{ -\infty \}.
% $$
Indeed, it is easily verified that the map $\DEG$ defined
by $\DEG(0) = -\infty$ and
$\DEG( x/s ) = \deg(x) - \deg(s)$
(for $x \in B \setminus \{0\}$ and $s \in S$)
is a well-defined degree function and is the unique extension of $\deg$.
\end{parag}

\begin{lemma}  \label{difuq90weukj}
Let $B$ be a domain of characteristic zero,
$S \subseteq B \setminus \{0\}$ a multiplicative set,
$G$ a totally ordered abelian group, and
$\deg : B \to G \cup \{ -\infty \}$ and 
$\DEG : S^{-1}B \to G \cup \{ -\infty \}$
degree functions such that $\deg$ is the restriction of $\DEG$.
Consider $D \in \Der(B)$ and its extension $S^{-1}D \in \Der( S^{-1}B )$.
Then $\deg(D)$ is defined if and only if $\DEG(S^{-1}D)$ is defined,
and if both degrees are defined then they are equal.
\end{lemma}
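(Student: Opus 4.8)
The plan is to reduce the statement to the purely formal machinery of \ref{dkfjasiodufpao}. Write $\delta = \delta_{S^{-1}D} : S^{-1}B \to G\cup\{-\infty\}$ for the auxiliary map attached to $\DEG$ and $S^{-1}D$, and recall from \ref{dkfjasiodufpao} that $\DEG(S^{-1}D) = \delta(S^{-1}B)$ and $\deg(D) = \delta_D(B)$ (each understood in the sense: both sides undefined, or both defined and equal). Since $\DEG$ restricts to $\deg$ and $S^{-1}D$ restricts to $D$ on $B$, we have $\delta(x) = \deg(Dx) - \deg(x) = \delta_D(x)$ for every $x \in B$, so the set $\setspec{\delta(x)}{x \in B}$ coincides with $\setspec{\delta_D(x)}{x \in B}$ and therefore $\delta(B) = \delta_D(B) = \deg(D)$. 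Thus it is enough to prove $\delta(B) = \delta(S^{-1}B)$; and since $B \subseteq S^{-1}B$, by \eqref{dlkjfasidiiii} (applied inside the domain $S^{-1}B$, with derivation $S^{-1}D$) it suffices to show that $S^{-1}B \preceq_{S^{-1}D} B$, i.e.\ that for every $\xi \in S^{-1}B$ there is $x \in B$ with $\delta(\xi) \le \delta(x)$.

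This last claim is the crux, and I would establish it by a direct computation. For $\xi = 0$ it is trivial. For $\xi \ne 0$, write $\xi = b/s$ with $b \in B\setminus\{0\}$ and $s \in S$; then $\DEG(\xi) = \deg(b) - \deg(s)$ (see \ref{dfqp29w3eoawjkk}) and $S^{-1}D(\xi) = (s\,Db - b\,Ds)/s^2$, so
\[
\delta(\xi) = \DEG(s\,Db - b\,Ds) - \deg(s) - \deg(b).
\]
Because $s\,Db - b\,Ds \in B$, the axioms of a degree function give
\[
\DEG(s\,Db - b\,Ds) = \deg(s\,Db - b\,Ds) \le \max(\deg(s)+\deg(Db),\ \deg(b)+\deg(Ds)),
\]
and substituting this and cancelling (translation by $-\deg(s)-\deg(b)$ is order-preserving on $G\cup\{-\infty\}$) yields $\delta(\xi) \le \max(\delta_D(b),\delta_D(s))$. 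Since $b,s\in B\setminus\{0\}$ and $\delta = \delta_D$ on $B$, taking $x$ to be whichever of $b,s$ realises this maximum gives $\delta(\xi)\le\delta(x)$ with $x\in B$, as needed. The conventions around $-\infty$ require no separate treatment: whenever $Db$, $Ds$, or $s\,Db-b\,Ds$ vanishes, the corresponding term is $-\infty$ and every inequality above still holds.

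Chaining the equalities, $\deg(D) = \delta_D(B) = \delta(B) = \delta(S^{-1}B) = \DEG(S^{-1}D)$, which is exactly the assertion. I expect no real difficulty beyond the displayed quotient-rule computation; the rest is bookkeeping with the definitions of $\delta_D$, $\preceq_D$ and $\delta_D(S)$. Note that the hypothesis $\Char B = 0$ plays no role in this argument.
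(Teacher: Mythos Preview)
Your proof is correct and follows essentially the same approach as the paper's: both arguments reduce to the inequality $\delta_{S^{-1}D}(b/s) \le \max(\delta_D(b),\delta_D(s))$, and both derive it from the quotient rule (the paper factors $b/s = b\cdot(1/s)$, computes $\delta_{S^{-1}D}(1/s)=\delta_D(s)$, and then invokes part~(1) of \ref{dklfja;skldjf}, which amounts to the same calculation you wrote out directly). Your explicit use of $\preceq_{S^{-1}D}$ and \eqref{dlkjfasidiiii} is just a repackaging of the paper's comparison of the sets $U$ and $U'$.
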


\begin{proof}
As $\delta_D : B \to G \cup \{ -\infty \}$ 
is the restriction of  $\delta_{S^{-1}D} : S^{-1}B \to G \cup \{ -\infty \}$,
we have $U \subseteq U'$, where we define
$U = \setspec{ \delta_D(x) }{ x \in B }$ and
$U' = \setspec{ \delta_{S^{-1}D}(x) }{ x \in S^{-1}B }$.
We first observe that if $s \in S$ then 
\begin{multline}
\label{difuqealkjlaks}
\delta_{S^{-1}D}( 1/s )
= \DEG \big( (S^{-1}D) (1/s) \big) - \DEG( 1/s )  \\
= \DEG( -D(s)/s^2 ) - \DEG( 1/s )
= \deg( D s ) - 2 \deg(s) + \deg( s ) 
= \delta_D(s) .
\end{multline}
Applying part (1) of \ref{dklfja;skldjf} to $\delta_{S^{-1}D}$
gives, for any $x \in B$ and $s \in S$,
$$
\delta_{S^{-1}D}( x/s )
= \delta_{S^{-1}D}( x (1/s) )
\le \max( \delta_{S^{-1}D}(x), \delta_{S^{-1}D}(1/s) )
= \max( \delta_D(x), \delta_D(s) ) \in U.
$$
This shows that $\forall_{u' \in U'}\,\exists_{u \in U}\, u' \le u$.
This, together with $U \subseteq U'$, proves the Lemma.
\end{proof}

%  \begin{parag}  \label{aeuijkdjui37487i2009029012}
%  {\bf Proof of \ref{dfjqio23ejqlkmskkkd}.}
%  Let $D \in \Der_\bk(B)$.
%  Then $S^{-1}D \in \Der_\bk(S^{-1}B)$, so $\DEG( S^{-1}D )$ is defined
%  (because $\DEG$ is $\bk$-tame by assumption),
%  so $\deg(D)$ is defined, by \ref{difuq90weukj}.
%  \hfill \qedsymbol
%  \end{parag}

Recall that if $B$ is a domain of characteristic zero then
each  locally nilpotent derivation $\Delta : B \to B$ determines
a degree function $\deg_\Delta : B \to \Nat \cup \{ -\infty \}$
(cf.\ for instance \cite[1.1.7]{Freud:Book}).

\begin{corollary}  \label{doifq90348rqiojkdf}
Let $B$ be a domain of finite transcendence degree over a
field $\bk$ of characteristic zero.
Let $\deg_\Delta : B \to \Nat \cup \{ -\infty \}$
be the degree function determined by a locally nilpotent derivation
$\Delta: B \to B$.
Then $\deg_\Delta$ is tame over $\bk$.

Moreover, if $t \in B$ is such that $\Delta(t) \neq 0$ and 
$\Delta^2(t) = 0$, and
$z_1, \dots, z_m \in \ker\Delta$ are such that $\ker\Delta$ is algebraic
over $\bk[ z_1, \dots, z_m ]$, then for each $D \in \Der_\bk(B)$
\begin{equation}  \label{dfql23748132o192kj}
\deg_\Delta(D)
= \max\{ \delta_D( z_1 ), \dots, \delta_D( z_m ), \delta_D( t ) \}.
\end{equation}
% Moreover, if $t \in B$ is a preslice of $\Delta$ and
% $z_1, \dots, z_m \in \ker\Delta$ are such that $\ker\Delta$ is algebraic
% over $\bk[ z_1, \dots, z_m ]$, then 
% $\deg_\Delta(D)
% = \max\{ \delta_D( z_1 ), \dots, \delta_D( z_m ), \delta_D( t ) \}$.
\end{corollary}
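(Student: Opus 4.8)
The plan is to reduce this to Corollary~\ref{dddddidufqwiejll} (the graded case $B = R^{[n]}$ with $R$ of degree zero) by the standard "slice" construction. First I would invoke the Slice Theorem: since $\Delta(t) \neq 0$ and $\Delta^2(t) = 0$, if we let $A = \ker \Delta$ and $s = \Delta(t) \in A \setminus \{0\}$, then on the localization $B_s = S^{-1}B$ (with $S$ the powers of $s$) the element $\tilde t = t/s$ is a slice for the extended derivation $S^{-1}\Delta$, i.e.\ $(S^{-1}\Delta)(\tilde t) = 1$. Consequently $B_s = (A_s)[\tilde t] = A_s^{[1]}$, and $S^{-1}\Delta = \partial/\partial \tilde t$. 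Under this identification the degree function $\deg_\Delta$ on $B$ extends to the degree function $\DEG := \deg_{S^{-1}\Delta}$ on $B_s$, and I claim $\DEG$ is exactly the degree function on $A_s[\tilde t] = A_s^{[1]}$ determined by the $\Integ$-grading in which $A_s \setminus \{0\}$ is homogeneous of degree $0$ and $\tilde t$ is homogeneous of degree $1$. This is because for a locally nilpotent $\Delta$ with slice, $\deg_\Delta$ of a polynomial $\sum_i b_i \tilde t^{\,i}$ (with $b_i \in \ker(S^{-1}\Delta) = A_s$) is just the largest $i$ with $b_i \neq 0$; one checks this directly against the definition $\deg_\Delta(f) = \max\{ n : \Delta^n f \neq 0\}$.

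Next I would apply Corollary~\ref{dddddidufqwiejll} to $B_s = A_s[\tilde t] = A_s^{[1]}$ with the grading above (so $n = 1$, $d_1 = 1$, $X_1 = \tilde t$, and the role of "$R$" played by $A_s$), over the field $\bk$; note $\trdeg_\bk(B_s) = \trdeg_\bk(B) < \infty$, and we are given $z_1,\dots,z_m \in \ker\Delta \subseteq A_s$ with $\ker\Delta$ algebraic over $\bk[z_1,\dots,z_m]$. I still need $A_s$ itself (not just $\ker\Delta$) to be algebraic over $\bk[z_1,\dots,z_m]$; but $A_s = (\ker\Delta)_s = (\ker\Delta)[1/s]$ and $s = \Delta(t) \in \ker\Delta$ is algebraic over $\bk[z_1,\dots,z_m]$, hence so is $1/s$ over the fraction field, so $A_s$ is algebraic over $\bk[z_1,\dots,z_m,s]$ and therefore over $\bk[z_1,\dots,z_m]$ after absorbing $s$ (or simply note $s$ is among, or algebraic over, the $z_i$'s up to enlarging the list harmlessly, which does not change the max since $\delta_D(s) \le \max_i \delta_D(z_i)$ by part~(1) of Lemma~\ref{dklfja;skldjf}). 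Corollary~\ref{dddddidufqwiejll} then gives, for $D' \in \Der_\bk(B_s)$,
$$
\DEG(D') = \max\{ \delta_{D'}(z_1), \dots, \delta_{D'}(z_m), \delta_{D'}(\tilde t) \},
$$
and in particular $\DEG$ is tame over $\bk$.

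Finally I would transport this back to $B$ using Lemma~\ref{difuq90weukj}: given $D \in \Der_\bk(B)$ it extends uniquely to $S^{-1}D \in \Der_\bk(B_s)$, and $\deg_\Delta(D)$ is defined iff $\DEG(S^{-1}D)$ is, with the two values equal; since $\DEG$ is tame, $\deg_\Delta$ is tame over $\bk$. For the precise formula \eqref{dfql23748132o192kj}, apply the displayed formula above with $D' = S^{-1}D$ and use that $\delta_D$ is the restriction of $\delta_{S^{-1}D}$, so $\delta_{S^{-1}D}(z_i) = \delta_D(z_i)$ for each $i$; it remains to replace $\delta_{S^{-1}D}(\tilde t)$ by $\delta_D(t)$. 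Here $\tilde t = t/s$ with $s \in \ker\Delta$, so $\delta_{S^{-1}D}(\tilde t) \le \max(\delta_D(t), \delta_D(s)) \le \max(\delta_D(t), \max_i \delta_D(z_i))$, and conversely $t = s \tilde t$ gives $\delta_D(t) \le \max(\delta_D(s), \delta_{S^{-1}D}(\tilde t)) \le \max(\max_i \delta_D(z_i), \delta_{S^{-1}D}(\tilde t))$; combining, the maximum over $\{z_1,\dots,z_m,\tilde t\}$ equals the maximum over $\{z_1,\dots,z_m,t\}$, which is \eqref{dfql23748132o192kj}.

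The main obstacle I expect is the bookkeeping around the localization at $s = \Delta(t)$: one must be careful that passing from $\ker\Delta$ to $A_s$ and from $t$ to the slice $\tilde t = t/s$ does not disturb the "algebraic over $\bk[z_1,\dots,z_m]$" hypothesis, and that the $\deg$-vs-$\delta_D$ maxima are genuinely unchanged under replacing $t$ by $\tilde t$ and $\ker\Delta$ by $A_s$ — this is exactly what the two-sided estimates via part~(1) of Lemma~\ref{dklfja;skldjf} are for. The identification of $\deg_\Delta$ on $B_s$ with a grading-degree is essentially the elementary theory of slices for locally nilpotent derivations and should be quotable or a one-line check.
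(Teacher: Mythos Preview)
Your approach is essentially the same as the paper's, but you have introduced an unnecessary detour. The paper localizes at $S=\{\alpha^n\}$ with $\alpha=\Delta(t)$ and observes directly that $S^{-1}B=(S^{-1}A)[t]=(S^{-1}A)^{[1]}$ with $t$ itself as the polynomial variable (this is the form in which the ``slice theorem'' is stated in \cite[p.~27]{Freud:Book}); since $s=\alpha$ is a unit in $S^{-1}A$, your $\tilde t=t/s$ and $t$ generate the same polynomial ring over $S^{-1}A$, and the $t$-degree coincides with the $\tilde t$-degree. Applying Corollary~\ref{dddddidufqwiejll} (or~\ref{dp9r8123849q23kjd}) with the variable $t$ gives
\[
\DEG(S^{-1}D)=\max\{\delta_{S^{-1}D}(z_1),\dots,\delta_{S^{-1}D}(z_m),\delta_{S^{-1}D}(t)\}
\]
directly, and since $z_1,\dots,z_m,t\in B$ each $\delta_{S^{-1}D}$ on the right is already a $\delta_D$; Lemma~\ref{difuq90weukj} then finishes without any $\tilde t\leftrightarrow t$ bookkeeping.

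One small correction: your justification of $\delta_D(s)\le\max_i\delta_D(z_i)$ via ``part~(1) of Lemma~\ref{dklfja;skldjf}'' is not right, since that part only bounds $\delta_D$ of a \emph{product} of the $z_i$, not of an arbitrary element algebraic over $\bk[z_1,\dots,z_m]$. The inequality you want is exactly Lemma~\ref{difjasdkjfa;klsd} (applied with $Z=\ker\Delta$, a $0$-subring, and $S=\{z_1,\dots,z_m\}$). In the paper's streamlined version this issue never arises, because the algebraicity of $S^{-1}A$ over $\bk[z_1,\dots,z_m]$ is immediate from $S^{-1}A\subseteq\Frac(A)$, and $s$ does not need to be inserted into the list.
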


\begin{proof}
Let $A = \ker\Delta$, $\alpha = \Delta(t) \in A \setminus \{0\}$
and $S = \setspec{ \alpha^n }{ n \in \Nat }$.
Then $S^{-1}B = (S^{-1}A)[t] = (S^{-1}A)^{[1]}$
(cf.\ for instance \cite[p.\ 27]{Freud:Book}).
Moreover, if $\DEG : S^{-1}B \to \Nat \cup \{ -\infty \}$ denotes the 
$t$-degree then $\deg_\Delta$ % : B \to \Nat \cup \{ -\infty \}$
is the restriction of $\DEG$, so the hypothesis of \ref{difuq90weukj}
is satisfied.
Apply either \ref{dp9r8123849q23kjd} or \ref{dddddidufqwiejll} to 
$\DEG$ and $S^{-1}D \in \Der_\bk( S^{-1}B )$:
as $S^{-1}A$ is algebraic over $\bk[ z_1, \dots, z_m ]$,
% $\DEG( S^{-1}D )$ is defined and 
$
\DEG( S^{-1}D )
= \max\{ \delta_{S^{-1}D}( z_1 ), \dots, \delta_{S^{-1}D}( z_m ),
\delta_{S^{-1}D}( t ) \}
$.
We have $\deg_\Delta(D) = \DEG( S^{-1}D )$ by \ref{difuq90weukj},
so we are done.
\end{proof}

\begin{Remark}
Let the notations and assumptions be as in \ref{doifq90348rqiojkdf}.
Then \eqref{dfql23748132o192kj} can be rewritten
(thanks to \ref{difjasdkjfa;klsd}) as 
$$
\deg_\Delta(D) = \max \big\{ \delta_D ( \ker\Delta ),\,  \delta_D(t) \big\}.
$$
However,
if we suppose that $\ker D \neq \ker\Delta$ then
Cor.~2.16 on p.~42 of \cite{Freud:Book} asserts that
$\deg_\Delta(D) =  \delta_D \big( \ker\Delta \big)$;
this last claim is not correct, as shown by the following example.
Let $B = \bk[z,t] = \kk2$,
$\Delta = \frac{\partial}{\partial t}$
and $D = z \frac{\partial}{\partial z} + t^2 \frac{\partial}{\partial t}$.
As $\Delta(t) \neq0$, $\Delta^2(t) =0$ and $\ker\Delta=\bk[z]$,
\eqref{dfql23748132o192kj} gives 
$\deg_\Delta(D) = \max\{ \delta_D(z), \delta_D(t) \} = \max\{ 0, 1 \} = 1$,
while $\delta_D \big( \ker\Delta \big)= \delta_D \big( \bk[z] \big)= 0$.
\end{Remark}

Here is another common situation where \ref{difuq90weukj} is useful
(compare with \ref{dfjqio23ejqlkmskkkd}):

\begin{corollary} \label{difu;awesja;lk;lj}
Let $L = \bk[ X_1^{\pm1}, \dots,  X_n^{\pm1} ]$ be the ring of
Laurent polynomials in $n$ variables over a field $\bk$
of characteristic zero,
let $\ggoth$ be a $G$-grading of $L$ where $G$ is some totally
ordered abelian group, and let 
$\deg_\ggoth : L \to G \cup \{ -\infty \}$ be the degree function
determined by $\ggoth$.
Let $B$ be a ring such that 
$\bk[ X_1, \dots,  X_n ] \subseteq B \subseteq L$
and let $\deg : B \to G \cup \{ -\infty \}$ be the restriction
of $\deg_\ggoth$.  Then $\deg$ is tame over $\bk$.
Moreover, if we also assume that each $X_i$ is a $\ggoth$-homogeneous
element of $L$ then
$\deg(D) = \max_{1 \le i \le n} \delta_D( X_i )$ for all $D \in \Der_\bk(B)$.
\end{corollary}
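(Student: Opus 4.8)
The plan is to reduce everything to the Laurent polynomial ring $L$ itself and then apply \ref{dif9w8er093409u}. Let $S\subseteq B$ be the multiplicative set consisting of all monomials $X_1^{a_1}\cdots X_n^{a_n}$ with $a_1,\dots,a_n\in\Nat$; this is a subset of $B\setminus\{0\}$ because $\bk[X_1,\dots,X_n]\subseteq B$. Since $\bk[X_1,\dots,X_n]\subseteq B\subseteq L$ and $X_i^{-1}\in L$ for each $i$, one checks at once that $S^{-1}B=L$. The degree function $\deg_\ggoth$ on $L$ restricts to $\deg$ on $B$, so, taking $\DEG=\deg_\ggoth$, the hypotheses of \ref{difuq90weukj} are satisfied (the only thing to note is that $\deg_\ggoth$ is indeed \emph{the} extension of $\deg$ to $S^{-1}B$, which is immediate from the uniqueness in \ref{dfqp29w3eoawjkk}). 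Hence, for every $D\in\Der_\bk(B)$ with canonical extension $S^{-1}D\in\Der_\bk(L)$, $\deg(D)$ is defined if and only if $\deg_\ggoth(S^{-1}D)$ is defined, and the two agree when defined. So it suffices to prove that $\deg_\ggoth$ is tame over $\bk$ as a degree function on $L$, and, for the refined statement, to compute $\deg_\ggoth(S^{-1}D)$.

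For the first assertion, $L=\bk[X_1^{\pm1},\dots,X_n^{\pm1}]$ is $\bk$-affine and, by hypothesis, $\deg_\ggoth$ is determined by the $G$-grading $\ggoth$ of $L$; thus \ref{dif9w8er093409u} (equivalently \ref{dfj;askdjflaksj}\eqref{diur18723hj}) shows that $\deg_\ggoth$ is tame over $\bk$. Combined with the previous paragraph, this gives that $\deg$ is tame over $\bk$.

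For the ``moreover'' part, assume in addition that each $X_i$ is $\ggoth$-homogeneous; then so is each $X_i^{-1}$, and $L$ is generated as a $\bk$-algebra by the $2n$ homogeneous elements $X_1^{\pm1},\dots,X_n^{\pm1}$. The ``more precisely'' clause of \ref{dif9w8er093409u} then yields, for $D\in\Der_\bk(B)$,
\[
\deg_\ggoth(S^{-1}D)=\max_{1\le i\le n}\big\{\,\delta_{S^{-1}D}(X_i),\ \delta_{S^{-1}D}(X_i^{-1})\,\big\}.
\]
Since $S^{-1}D$ extends $D$ and $X_i\in B$, we have $\delta_{S^{-1}D}(X_i)=\delta_D(X_i)$; and the same one-line computation as in the proof of \ref{difuq90weukj} — namely $(S^{-1}D)(X_i^{-1})=-D(X_i)/X_i^2$, whence $\delta_{S^{-1}D}(X_i^{-1})=\deg(DX_i)-2\deg(X_i)+\deg(X_i)=\delta_D(X_i)$ — shows $\delta_{S^{-1}D}(X_i^{-1})=\delta_D(X_i)$ as well. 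Therefore $\deg_\ggoth(S^{-1}D)=\max_{1\le i\le n}\delta_D(X_i)$, and by \ref{difuq90weukj} this equals $\deg(D)$.

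The proof is essentially bookkeeping around \ref{difuq90weukj} and \ref{dif9w8er093409u}, so there is no real obstacle; the only point requiring a little care is the choice of $S$: it must contain each $X_i$ individually (not merely their product), so that the homogeneous generators $X_i^{-1}$ of $L$ are handled by the $\delta$-computation above and the final formula comes out as $\max_i\delta_D(X_i)$ rather than involving extra terms.
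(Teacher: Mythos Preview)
Your proof is correct and follows essentially the same route as the paper's: localize at the monomials to get $S^{-1}B=L$, apply \ref{dif9w8er093409u} to the $\bk$-affine graded ring $L$, transfer back via \ref{difuq90weukj}, and for the refined formula use the identity $\delta_{S^{-1}D}(X_i^{-1})=\delta_D(X_i)$ (equation~\eqref{difuqealkjlaks}). The only superfluous step is the appeal to uniqueness in \ref{dfqp29w3eoawjkk}; the hypothesis of \ref{difuq90weukj} merely requires that $\deg$ be the restriction of $\DEG$, which you already have by assumption.
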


\begin{proof}
Let $S =
\setspec{ X_1^{e_1} \cdots X_n^{e_n} }{ (e_1, \dots, e_n) \in \Nat^n }$
and note that $S^{-1}B=L$.
Let $D \in \Der_\bk(B)$.
By \ref{dif9w8er093409u}, $\deg_\ggoth( S^{-1}D )$ is defined;
by \ref{difuq90weukj}, it follows that $\deg(D)$ is defined and
$\deg(D) = \deg_\ggoth( S^{-1}D )$; in particular, $\deg$ is tame over $\bk$.
Under the additional assumption that each $X_i$ is homogeneous,
\ref{dif9w8er093409u} gives
\begin{multline*}
\deg_\ggoth( S^{-1}D ) =
\max\{ \delta_{S^{-1}D}( X_1 ),  \delta_{S^{-1}D}( 1/X_1 ), \dots, 
\delta_{S^{-1}D}( X_n ),  \delta_{S^{-1}D}( 1/X_n ) \} \\
= \max\{ \delta_{D}( X_1 ), \dots, \delta_{D}( X_n ) \} ,
\end{multline*}
where for the last equality we used that 
$\delta_{S^{-1}D}( 1/X_i ) = \delta_D( X_i )$
for each $i$ (see \eqref{difuqealkjlaks} in the proof of \ref{difuq90weukj}).
We already noted that $\deg(D) = \deg_\ggoth( S^{-1}D )$, so we are done.
\end{proof}

\section*{Finite generation of the associated graded ring}

We shall now study triples $(B,G,\deg)$ as in \ref{setup} which satisfy
the additional condition that $\Gr B$ is a finitely generated algebra
over a zero-subring (as explained in \ref{dfiqpw9e3r90ad}, below).
For this type of consideration, the following device is useful.

\begin{definition}   \label{sdp091231u23ikjd}
\newcommand{\bA}{{\bar A}}
Let $(B,G,\deg)$ be as in \ref{setup}.
\begin{enumerate}
\setlength{\itemsep}{1mm}

\item By a \textit{subpair\/} of $(B,\Gr B)$,
we mean a pair $(A,\bA)$ where $A$ is a subset of $B$, $1 \in A$,
$\bA$ is a homogeneous subring of $\Gr B$ and:
\begin{equation}  \label{pair}
\tag{$\dagger$}
\textrm{Each homogeneous element of $\bA$ is of the form $\gr(a)$
for some $a\in A$.}
\end{equation}

\item Let $D \in \Der(B)$.
By a \textit{$D$-subpair} of $(B,\Gr B)$, we mean 
a subpair $(A,\bA)$ of $(B,\Gr B)$ such that $\delta_D(A)$ is defined.

\item If $(A,\bA)$ is a subpair of $(B,\Gr B)$ and $x\in B$, we define
$$
(A,\bA)_x = (A_x, \bA[\gr(x)]),
$$
where $A_x$ is the collection of all elements $b \in B$ which can be written
in the form  $b = \sum_{i=0}^m a_ix^i$ for some $m\in\Nat$ and
$a_0, \dots, a_m \in A$ satisfying:
\begin{equation}\label{Ax}
\tag{$\ddagger$}
\deg(a_jx^j) = \deg(b) \quad\text{for all $j$ such that $a_j\neq0$.}
\end{equation}

% \item If $(A,\bA)$ is a subpair of $(B,\Gr B)$ and $x\in B$, we define
% $$
% (A,\bA)_x = (A_x, \bA[\gr(x)]),
% $$
% where $A_x$ is the collection of all quantities $\sum_{i=0}^m a_ix^i$
% (with $m\in\Nat$, $a_i\in A$) satisfying
% %
% \begin{equation}\label{Ax}
% \tag{$\ddagger$}
% \deg(a_jx^j) = \deg({\textstyle\sum_{i=0}^m a_ix^i})
% \quad\text{for all $j$ such that $a_j\neq0$.}
% \end{equation}

\end{enumerate}
\end{definition}

\begin{Remark} \label{dkjfoq8w3283i8fk}
\newcommand{\bA}{{\bar A}}
As in part~(3) of \ref{sdp091231u23ikjd}, consider a subpair
$(A,\bA)$ of $(B,\Gr B)$ and $x\in B$.
Then $A \cup \{x\} \subseteq A_x \subseteq R[x]$,
where $R$ is the subring of $B$ generated by $A$.
\end{Remark}

\begin{lemma}   \label{sdkjfpqiowepakj999}
\newcommand{\bA}{{\bar A}}
Let $(B,G,\deg)$ be as in \ref{setup}.
\begin{enumerate}
\setlength{\itemsep}{1mm}

\item  If $(A,\bA)$ is a subpair of $(B,\Gr B)$,
then so is $(A,\bA)_x$ for each $x \in B$.

\item  Let $D \in \Der(B)$.  If $(A,\bA)$ is a $D$-subpair of $(B,\Gr B)$,
then so is $(A,\bA)_x$ for each $x \in B$.
Moreover, $\delta_D( A_x ) = \delta_D( A \cup \{x\} )$.

\end{enumerate}
\end{lemma}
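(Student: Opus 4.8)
The plan is to verify the two assertions essentially by unwinding the definitions in \ref{sdp091231u23ikjd}(3), using \ref{dkfjoq8w3283i8fk} to control the set $A_x$. For part~(1), I must show that $(A_x, \bar A[\gr(x)])$ is a subpair of $(B,\Gr B)$, i.e. that $1 \in A_x$, that $\bar A[\gr(x)]$ is a homogeneous subring of $\Gr B$, and that condition $(\dagger)$ holds: every homogeneous element of $\bar A[\gr(x)]$ is $\gr(a)$ for some $a \in A_x$. The containments $1 \in A \subseteq A_x$ and the fact that $\bar A[\gr(x)]$ is a homogeneous subring (since $\bar A$ is homogeneous and $\gr(x)$ is a homogeneous element of $\Gr B$) are immediate. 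For $(\dagger)$, the key point is that a homogeneous element of $\bar A[\gr(x)]$ can be written as $\sum_{i} \bar a_i \gr(x)^i$ where each $\bar a_i$ is homogeneous in $\bar A$; discarding the terms whose degree in $\Gr B$ does not match, and writing $\bar a_i = \gr(a_i)$ with $a_i \in A$ (using $(\dagger)$ for $(A,\bar A)$), the element $b = \sum_i a_i x^i$ then satisfies $\gr(b) = \sum_i \gr(a_i)\gr(x)^i$ (the $\gr$ of a sum of elements all of the same degree whose leading terms do not cancel equals the sum of the $\gr$'s) and the degree-matching condition $(\ddagger)$, so $b \in A_x$ and the given homogeneous element equals $\gr(b)$.

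For part~(2), assume in addition that $D \in \Der(B)$ and $\delta_D(A)$ is defined; I must show $\delta_D(A_x)$ is defined and equals $\delta_D(A \cup \{x\})$. The inclusion $A \cup \{x\} \subseteq A_x$ from \ref{dkfjoq8w3283i8fk} gives immediately that, if $\delta_D(A_x)$ is defined, then $\delta_D(A_x) \ge \delta_D(A\cup\{x\})$ (and in particular $\delta_D(A\cup\{x\})$ is defined, being the sup over a subset of a set with a greatest element). So by \eqref{dlkjfasidiiii} of \ref{dkfjasiodufpao} it suffices to prove $A_x \preceq_D A \cup \{x\}$, i.e. that for each $b \in A_x$ there is some $s \in A \cup \{x\}$ with $\delta_D(b) \le \delta_D(s)$. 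Write $b = \sum_{i=0}^m a_i x^i$ with the $a_i \in A$ satisfying $(\ddagger)$, so that $\deg(b) = \max_i \deg(a_i x^i)$ over the $i$ with $a_i \ne 0$. Part~(2) of \ref{dklfja;skldjf} then gives $\delta_D(b) \le \max_i \delta_D(a_i x^i)$, and part~(1) of the same lemma gives $\delta_D(a_i x^i) \le \max(\delta_D(a_i), \delta_D(x))$; since $\delta_D(A)$ is defined, each $\delta_D(a_i) \le \delta_D(A) = \delta_D(a)$ for a suitable $a \in A$, whence $\delta_D(b) \le \max(\delta_D(A), \delta_D(x)) = \delta_D(A \cup \{x\})$. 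Taking the sup over $b \in A_x$ shows $\delta_D(A_x)$ is defined and bounded above by $\delta_D(A\cup\{x\})$, and combined with the reverse inequality we get equality.

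The main subtlety, and the place requiring the most care, is the verification of $(\dagger)$ in part~(1): one has to be sure that after replacing each homogeneous $\bar a_i$ by some $\gr(a_i)$ and forming $b = \sum_i a_i x^i$, no cancellation of leading terms occurs — this is exactly guaranteed by keeping only those $i$ for which $\deg(\bar a_i \gr(x)^i)$ in $\Gr B$ equals the degree of the homogeneous element we started with, which forces $\deg(a_i x^i) = \deg(b)$, i.e. condition $(\ddagger)$, and simultaneously makes $\gr$ additive on this particular sum. Everything else is a direct application of \ref{dklfja;skldjf} and the definitions.
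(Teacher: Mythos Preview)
Your plan is essentially correct and, for part~(1), follows the same route as the paper. There is one imprecision worth flagging. You correctly identify the key subtlety in verifying $(\dagger)$: having lifted $\bar a_i = \gr(a_i)$ and set $b = \sum_i a_i x^i$, one must check that the leading terms of the $a_i x^i$ do not cancel, so that $\deg(b)$ equals the degree of the given homogeneous element $\bar y$ and $\gr(b) = \bar y$. But your stated justification --- ``this is exactly guaranteed by keeping only those $i$ for which $\deg(\bar a_i \gr(x)^i)$ equals the degree of $\bar y$'' --- is not sufficient: restricting to those $i$ only ensures that all surviving $a_i x^i$ have the \emph{same} degree $\deg(\bar y)$, not that their sum $b$ retains that degree. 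The missing observation (which the paper makes explicit) is that if $\deg(b) < \deg(\bar y)$ then $\sum_i \gr(a_i x^i) = 0$ in $\Gr B$, i.e.\ $\bar y = 0$; so for $\bar y \neq 0$ non-cancellation is forced, while the case $\bar y = 0$ is trivial since $0 = \gr(0)$ with $0 \in A \subseteq A_x$.

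For part~(2) your argument via parts~(1) and~(2) of Lemma~\ref{dklfja;skldjf} is correct and in fact slightly more streamlined than the paper's. The paper instead writes $y = f(x)$ with $f(X) = \sum a_i X^i$, expands $D(y) = f^{(D)}(x) + f'(x)\,D(x)$, and bounds $\deg f^{(D)}(x)$ and $\deg(f'(x)Dx)$ by hand using $(\ddagger)$. Your route avoids this direct computation by observing that $(\ddagger)$ is exactly the hypothesis of \ref{dklfja;skldjf}(2), after which \ref{dklfja;skldjf}(1) applied to each monomial $a_i x^i$ finishes the bound; this is a genuine (if modest) simplification.
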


\begin{proof}
\newcommand{\bx}{{\gr(x)}}
\newcommand{\by}{{\bar y}}
\newcommand{\bA}{{\bar A}}
\newcommand{\ba}[1]{{\bar a}_{#1}}
Let $(A,\bA)$ be a subpair of $(B,\Gr B)$, let $x \in B$,
and consider $(A,\bA)_x = (A_x, \bA[\gr(x)])$;
we show that $(A,\bA)_x$ is a subpair of $(B,\Gr B)$.
We may assume that $x\neq0$, because $(A,\bA)_0 = (A, \bA)$.
As $1 \in A$ and $A \subseteq A_x$, we have $1 \in A_x$.
% Write $\bx = \gr(x)$.
We have to show that if $\by$ is a homogeneous element of $\bA[\bx]$ then
$\by = \gr(y)$ for some $y\in A_x$.  Note that this is clear if
$\by=0$ (because \eqref{pair} implies $0\in A$, hence $0\in A_x$),
so assume $\by\neq0$. We have
$$
\by = \sum_{i=0}^m \ba i\bx^i
$$
for some $m \in \Nat$
and some homogeneous elements $\ba 0,\dots,\ba m \in \bA$ satisfying
\begin{equation}\label{bAx}
\deg(\ba j\bx^j) = \deg(\by) \qquad
\text{for all $j$ such that $\ba j\neq0$}.
\end{equation}
By \eqref{pair},
there exist $a_0,\dots,a_m\in A$ such that $\gr(a_i) = \ba i$
for all $i$.
Since $\deg(a_j x^j) = \deg(\gr( a_j x^j)) = \deg(\ba j \bx^j)$,
\eqref{bAx} implies that
$\deg(a_j x^j) = \deg(\by)$ whenever $a_j\neq0$.
Consequently,
$$
\by = \sum_{i=0}^m \gr (a_i x^i)
= \begin{cases}
\gr( \textstyle \sum_{i=0}^m a_ix^i ), & \text{if }
			\deg(  \textstyle \sum_{i=0}^m a_ix^i ) =  \deg(\by), \\
0, & \text{if }
			\deg(  \textstyle \sum_{i=0}^m a_ix^i ) <  \deg(\by).
\end{cases}
$$
Since $\by\neq0$, it follows that 
$\deg(  \textstyle \sum_{i=0}^m a_ix^i ) =  \deg(\by)$
(so $\sum_{i=0}^m a_ix^i \in A_x$) and that 
$\by= \gr\big( \sum_{i=0}^m a_ix^i \big)$,
so $\by = \gr(y)$ for some $y\in A_x$.
So $(A,\bA)_x$ is indeed a subpair of $(B,\Gr B)$,
and assertion~(1) is proved.

Let $D \in \Der(B)$, assume that $(A,\bA)$ is a $D$-subpair of $(B,\Gr B)$
and let $x \in B$.
To show that $(A,\bA)_x$ is a $D$-subpair of $(B,\Gr B)$, 
we have to show that $\delta_D( A_x )$ is defined.
We may assume that $x \neq 0$.
Let $y\in A_x$; then we may write 
$y = {\textstyle\sum_{i=0}^m a_i x^i}$ for some $m \in \Nat$ and
$a_0,\dots,a_m\in A$ such that \eqref{Ax} holds,
i.e., 
$$
\deg(a_jx^j) = \deg y \textrm{\ \ whenever\ } a_j\neq0.
$$
Write $f(X) = {\textstyle\sum_{i=0}^m a_i X^i}$; then
$y=f(x)$ and $Dy = f^{(D)}(x) + f'(x)Dx$.

If $j$ is such that $a_j\neq0$ then
$$
\deg( D(a_j)x^j ) = \delta_D(a_j)+\deg(a_j)+\deg(x^j)
= \delta_D(a_j)+\deg(a_jx^j)
= \delta_D(a_j)+\deg(y),
$$
so $\deg(f^{(D)}(x)) \le \delta_D(\alpha)+\deg(y)$ for some $\alpha \in A$.
Also, if $j>0$ is such that $a_j\neq0$ then
$$
\deg(j a_j x^{j-1} D(x))
= \deg(a_j x^j) -\deg(x)+\deg D(x)
= \deg(y) + \delta_D(x),
$$
so $\deg( f'(x)D(x) ) \le \deg(y) + \delta_D(x)$. Thus,
\begin{multline*}
\delta_D(y)+\deg(y) = \deg( D(y) )
\le \max( \deg(f^{(D)}(x)), \deg( f'(x)D(x) ) ) \\
\le \max( \delta_D(\alpha)+\deg(y),  \deg(y) + \delta_D(x))
\end{multline*}
and it follows that $\delta_D(y) \le \max(\delta_D(\alpha),\delta_D(x))$.
We have shown that $A_x \preceq_D A \cup \{x\}$.
As $1 \in A$, we have $x \in A_x$ and hence $A \cup \{x\} \subseteq A_x$.
Thus
\begin{equation}  \label{dfuq23984-13io}
\delta_D(A_x) = \delta_D( A \cup \{x\} ),
\end{equation}
by \ref{dkfjasiodufpao}.
As $\delta_D(A)$ is defined, so is $\delta_D(A \cup \{x\})$;
so, by \eqref{dfuq23984-13io}, $\delta_D(A_x)$ is defined.
This proves assertion~(2).
\end{proof}

\begin{lemma}    \label{dfiqpw9e3r90ad}
Given $(B,G,\deg)$ as in \ref{setup}, the following hold.
\begin{enumerate}

\item \it For any $0$-subring $Z$ of $B$ (cf.\ \ref{odfuipq90w389j}),
$\Gr B$ is a $Z$-algebra.

\item \label{dkfjq234729qijk}
\it If $\deg(x) \ge 0$ for all $x \in B \setminus \{0\}$ then
the subring $Z = \setspec{ x \in B }{ \deg(x) \le 0 }$ of $B$ is in fact
a $0$-subring of $B$, and is factorially closed in $B$.
By {\rm (1)}, it follows that $\Gr B$ is a $Z$-algebra.

\item  \it If $\setspec{ \deg(x) }{ x \in B \setminus \{0\} }$
is a well-ordered subset of $G$
then $\deg(x) \ge 0$ for all $x \in B \setminus \{0\}$, i.e., 
the hypothesis of \eqref{dkfjq234729qijk} is satisfied.

\end{enumerate}
\end{lemma}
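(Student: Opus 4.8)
All three assertions are straightforward verifications against the constructions recalled in \ref{dfjapisdjfa;kj}, and I would prove them in the order (1), (2), (3). For (1), recall that $B_0 = \setspec{x \in B}{\deg(x) \le 0}$ is a subring of $B$ and that $B_{0^-} = \setspec{x \in B}{\deg(x) < 0}$ is an ideal of $B_0$: it is closed under addition by the ultrametric inequality, and it absorbs $B_0$ because $\deg(xy) = \deg(x) + \deg(y)$. Hence the degree-zero homogeneous component $B_{[0]} = B_0/B_{0^-}$ of $\Gr B$ is a ring and the quotient map $B_0 \to B_{[0]}$ is a ring homomorphism. Now if $Z$ is a $0$-subring of $B$ then $Z \subseteq B_0$, and the composite $Z \hookrightarrow B_0 \twoheadrightarrow B_{[0]} \hookrightarrow \Gr B$ is a ring homomorphism; it coincides with the restriction of $\gr$ to $Z$, because $\deg(z) = 0$ for every nonzero $z \in Z$. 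This exhibits the desired $Z$-algebra structure on $\Gr B$ (and the homomorphism is even injective, its kernel being $Z \cap B_{0^-} = \{0\}$).

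For (2), I would first verify that $Z = \setspec{x \in B}{\deg(x) \le 0}$ is a subring: it contains $0$ and $1$ (using $\deg(1) = 0$), is closed under addition by the ultrametric inequality and under multiplication because $\deg(xy) = \deg(x) + \deg(y) \le 0$, and is closed under negation because $\deg(-x) = \deg(x)$. Under the hypothesis $\deg(x) \ge 0$ for all $x \in B \setminus \{0\}$, any nonzero $z \in Z$ satisfies $0 \le \deg(z) \le 0$, hence $\deg(z) = 0$; so $Z$ is a $0$-subring. For factorial closure, if $x, y \in B$ satisfy $xy \in Z \setminus \{0\}$ then $x, y \neq 0$, so $\deg(x), \deg(y) \ge 0$, while $\deg(x) + \deg(y) = \deg(xy) \le 0$; this forces $\deg(x) = \deg(y) = 0$, so $x, y \in Z$. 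The last sentence of (2) is then immediate from (1).

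For (3), I would argue by contradiction: if some nonzero $x \in B$ had $d := \deg(x) < 0$, then, $B$ being a domain, $x^n \neq 0$ and $\deg(x^n) = nd$ for every $n \ge 1$; since $d < 0$ in a totally ordered abelian group, $(n+1)d < nd$ for all $n$, so $(nd)_{n \ge 1}$ would be a strictly decreasing sequence inside $\setspec{\deg(y)}{y \in B \setminus \{0\}}$, contradicting that this set is well-ordered. Hence $\deg(x) \ge 0$ for all $x \neq 0$. I do not expect a genuine obstacle anywhere; the only points that need a moment's care are the verification that $B_{0^-}$ is an ideal of $B_0$ (so that $\gr|_Z$ really is a ring homomorphism) in part (1), and the observation that an infinite strictly descending chain of powers is incompatible with well-orderedness in part (3).
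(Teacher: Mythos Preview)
Your proof is correct and follows essentially the same approach as the paper: for (1) you construct exactly the composite $Z \hookrightarrow B_0 \to B_{[0]} \hookrightarrow \Gr B$, $z \mapsto \gr(z)$, that the paper writes down, and for (2) and (3) you simply spell out the verifications that the paper dismisses as ``trivial''. There is nothing to add.
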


\begin{proof}
Let $Z$ be a $0$-subring of $B$.
If $B_i$, $B_{i^-}$ and $B_{[i]}$ are defined as in
\ref{dfjapisdjfa;kj} then the composite
$
Z \hookrightarrow B_0 \to B_{[0]} \hookrightarrow \Gr B
$
is an injective homomorphism of rings $Z \to \Gr B$, $z \mapsto \gr(z)$.
This defines a structure of $Z$-algebra on $\Gr B$:
if $z \in Z$ and $\xi \in \Gr B$, then $z\xi = \gr(z)\xi$.
Assertions (2) and (3) are trivial.
\end{proof}

\begin{lemma}   \label{d9f8q9weij8932}
Let $(B,G,\deg)$ be as in \ref{setup} and let $Z$ be a $0$-subring of $B$.
Assume that $\Gr(B)$ is finitely generated as a $Z$-algebra
(cf.\ \ref{dfiqpw9e3r90ad}) and consider elements $x_1,\dots,x_n\in B$
satisfying $\Gr B = Z [ \gr(x_1), \dots, \gr(x_n) ]$.
\begin{enumerate}
\setlength{\itemsep}{1mm}

\item There exists a set $E$ satisfying
$Z \cup \{ x_1, \dots, x_n \} \subseteq E \subseteq Z[x_1, \dots, x_n]$ and:
\begin{enumerate}
\setlength{\itemsep}{1mm}

\item $\forall_{x\in B\setminus\{0\}}\,\exists_{e\in E}\, \deg(x-e)<\deg x$

\item $\displaystyle \delta_D(E) = \delta_D(Z \cup \{ x_1, \dots, x_n \} )$
for every $D \in \Der(B)$ such that $\delta_D( Z )$ is defined.
% $$
% \displaystyle \delta_D(E) = \delta_D(Z \cup \{ x_1, \dots, x_n \} ).
% $$

\end{enumerate}

\item If $\setspec{ \deg(x) }{ x \in B \setminus \{0\} }$ is
a well-ordered subset of $G$ then $B = Z[ x_1, \dots, x_n ]$.  

\end{enumerate}
\end{lemma}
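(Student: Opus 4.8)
The plan is to build $E$ by iterating the operation $(A,\bar A)\mapsto(A,\bar A)_x$ of \ref{sdp091231u23ikjd} along the generators $x_1,\dots,x_n$, starting from the subpair attached to $Z$. First I would check that $Z$ furnishes a legitimate initial subpair: since $\deg$ vanishes on $Z\setminus\{0\}$, the composite $Z\hookrightarrow B_0\to B_{[0]}\hookrightarrow\Gr B$ appearing in the proof of \ref{dfiqpw9e3r90ad} is an injective ring homomorphism $z\mapsto\gr(z)$; I would let $\bar Z$ be its image, a homogeneous subring of $\Gr B$ concentrated in degree $0$ whose (homogeneous, hence all) elements are exactly the $\gr(z)$, $z\in Z$. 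Then $(Z,\bar Z)$ is a subpair of $(B,\Gr B)$, and it is a $D$-subpair whenever $\delta_D(Z)$ is defined. Note also that, with the $Z$-algebra structure of \ref{dfiqpw9e3r90ad}(1) on $\Gr B$, the hypothesis reads $\Gr B=\bar Z[\gr(x_1),\dots,\gr(x_n)]$.

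Next I would put $(A_0,\bar A_0)=(Z,\bar Z)$ and $(A_k,\bar A_k)=(A_{k-1},\bar A_{k-1})_{x_k}$ for $1\le k\le n$, so that $\bar A_k=\bar Z[\gr(x_1),\dots,\gr(x_k)]$, and take $E=A_n$. By \ref{sdkjfpqiowepakj999}(1) each $(A_k,\bar A_k)$ is a subpair, so in particular $(E,\bar A_n)$ is, and $\bar A_n=\Gr B$ by the reformulated hypothesis. Iterating Remark~\ref{dkjfoq8w3283i8fk} then gives $Z\cup\{x_1,\dots,x_n\}\subseteq E\subseteq Z[x_1,\dots,x_n]$, which is the required inclusion. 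For property~(1)(a): given $x\in B\setminus\{0\}$, the element $\gr(x)$ is homogeneous in $\bar A_n=\Gr B$, so \eqref{pair} produces $e\in E$ with $\gr(x)=\gr(e)$; then $x$ and $e$ are nonzero of the same degree $i$ and $x-e\in B_{i^-}$, whence $\deg(x-e)<\deg(x)$. For property~(1)(b): fixing $D\in\Der(B)$ with $\delta_D(Z)$ defined, $(A_0,\bar A_0)$ is a $D$-subpair, so by \ref{sdkjfpqiowepakj999}(2) so is each $(A_k,\bar A_k)$, with $\delta_D(A_k)=\delta_D(A_{k-1}\cup\{x_k\})=\max\big(\delta_D(A_{k-1}),\delta_D(x_k)\big)$ (the last step because $\delta_D$ of the union of two sets on which it is defined is the larger of the two values, and $\delta_D$ of a singleton is always defined). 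A straightforward induction then yields $\delta_D(E)=\max\big(\delta_D(Z),\delta_D(x_1),\dots,\delta_D(x_n)\big)=\delta_D(Z\cup\{x_1,\dots,x_n\})$, as wanted.

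For part~(2), assuming $\setspec{\deg(x)}{x\in B\setminus\{0\}}$ is well-ordered, I would argue by contradiction. If $T=\setspec{x\in B\setminus\{0\}}{x\notin Z[x_1,\dots,x_n]}$ were nonempty, its degrees would form a nonempty subset of that well-ordered set, so I could pick $x_0\in T$ of least degree. By~(1)(a) there is $e\in E\subseteq Z[x_1,\dots,x_n]$ with $\deg(x_0-e)<\deg(x_0)$, and $x_0-e\neq0$ (otherwise $x_0=e\in Z[x_1,\dots,x_n]$). Minimality of $\deg(x_0)$ then forces $x_0-e\notin T$, i.e. $x_0-e\in Z[x_1,\dots,x_n]$, whence $x_0=(x_0-e)+e\in Z[x_1,\dots,x_n]$, a contradiction. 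Hence $T=\emptyset$ and $B=Z[x_1,\dots,x_n]$.

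The only point needing genuine care is the preliminary bookkeeping: verifying that $(Z,\bar Z)$ really is a subpair (and a $D$-subpair when $\delta_D(Z)$ is defined) and that the iterated ring component $\bar A_n$ coincides with $\Gr B$. Once that is settled, the remainder is purely formal, resting on \ref{sdkjfpqiowepakj999} and Remark~\ref{dkjfoq8w3283i8fk}; I do not expect any substantial obstacle, since the real content has already been isolated in those two results.
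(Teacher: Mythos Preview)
Your argument is correct and follows essentially the same approach as the paper: you build the same chain of subpairs $(A_k,\bar A_k)=(A_{k-1},\bar A_{k-1})_{x_k}$ starting from $(Z,\bar Z)$, take $E=A_n$, and invoke \ref{sdkjfpqiowepakj999} and Remark~\ref{dkjfoq8w3283i8fk} exactly as the paper does. The only differences are cosmetic: you spell out the induction for $\delta_D(E)$ via the max formula and unwind the contradiction in part~(2) a bit more explicitly, whereas the paper leaves both of these one-liners to the reader.
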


\begin{proof}
\newcommand{\bx}{{\bar x}}
\newcommand{\by}{{\bar y}}
\newcommand{\bA}{{\bar A}}
\newcommand{\ba}[1]{{\bar a}_{#1}}
%%%
We have $\Gr B = Z[\bx_1,\dots,\bx_n ]$, where we define
$\bx_i = \gr(x_i)$ for all $i$. 
Define $A_0 = Z \subseteq B$ and $\bA_0 =  %  \bar Z \subseteq \Gr B$,
\setspec{ \gr(z) }{ z \in Z } \subseteq \Gr B$,
and note that $(A_0,\bA_0)$ is
a subpair of $(B,\Gr B)$.
For $1\le i\le n$, define
$(A_{i},\bA_{i}) = (A_{i-1},\bA_{i-1})_{x_i}$; then set $E=A_n$ and
note that
$Z \cup \{ x_1, \dots, x_n \} \subseteq E \subseteq Z[x_1, \dots, x_n]$,
by \ref{dkjfoq8w3283i8fk}.
Also, $(A_n,\bA_n)$ is (by \ref{sdkjfpqiowepakj999})
a subpair of $(B,\Gr B)$ and
$\bA_n  = \bA_0[\bx_1,\dots,\bx_n ] = Z[\bx_1,\dots,\bx_n ] = \Gr B$;
it follows that each homogeneous element of $\Gr(B)=\bA_n$ is of the form
$\gr(e)$ for some $e \in E=A_n$; so $E$ satisfies condition~(a). 
Let  $D \in \Der(B)$ be such that $\delta_D( Z )$ is defined;
then $(A_0,\bA_0)$ is a $D$-subpair of $(B,\Gr B)$;
so, by repeated application of \ref{sdkjfpqiowepakj999},
$(A_n,\bA_n)$ is a $D$-subpair of $(B,\Gr B)$
and $\delta_D(E) = \delta_D(A_n) = \delta_D(Z \cup \{ x_1, \dots, x_n \})$.
So $E$ satisfies~(b).

We prove (2) by contradiction: assume that
$\setspec{ \deg(x) }{ x \in B \setminus \{0\} }$ is
well-ordered and $B \neq Z[x_1, \dots, x_n]$.
Pick $b_0 \in B \setminus Z[x_1, \dots, x_n]$ such that
$\deg( b_0 )$ is the least element of 
$\setspec{ \deg(x) }{ x \in B \setminus Z[x_1, \dots, x_n] }$.
Then there exists $e \in E \subseteq Z[x_1, \dots, x_n]$ such
that $\deg( b_0 - e ) < \deg( b_0 )$, and this leads to a contradiction.
So $B = Z[x_1, \dots, x_n]$.
\end{proof}

\begin{Remark}
The assumption that $\setspec{ \deg(x) }{ x \in B \setminus \{0\} }$ is
well-ordered, in \ref{d9f8q9weij8932}(2), is needed.
Indeed, consider $B = \bk[x,y]$ and $\deg : B \to \Integ \cup \{ -\infty \}$ 
as in the proof of \ref{dfkljasdklfja}.
Then  $\deg(x)=1$ and $\deg( y - a_0 ) = -k$ where $k\ge1$.
Define $x_1 = x$ and $x_2 = x^{2k-1}(y - a_0)^2$,
then $\deg(x_1)=1$ and $\deg(x_2)=-1$,
so $\Gr(B) = \bk[ \gr(x_1), \gr(x_2) ]$ (because $\Gr(B) \isom \bk[t,t^{-1}]$).
However, $B \neq \bk[x_1, x_2]$.
\end{Remark}

\begin{proposition}  \label{NEW_dofiq-290348rakj}
Let $(B,G,\deg)$ be as in \ref{setup} and suppose that
\begin{enumerate}

\item $\setspec{ \deg(x) }{ x \in B \setminus \{0\} }$ is a well-ordered
subset of $G$

\item $\Gr B$ is finitely generated as a $Z$-algebra,

\end{enumerate}
where $Z = \setspec{ x \in B }{ \deg(x) \le 0 }$ (cf.\ \ref{dfiqpw9e3r90ad}).
Then the following hold:
\begin{enumerate}
\addtocounter{enumi}{2}

\item $B$ is finitely generated as a $Z$-algebra;

\item  \label{dfkjq2i3uqkqwj9o}
if $A$ is a subring of $Z$ such that $\trdeg_A(Z) < \infty$,
then $\deg$ is tame over $A$.
% then $\deg(D)$ is defined for every $D \in \Der_A(B)$.

\end{enumerate}
More precisely, let $A$ be as in \eqref{dfkjq2i3uqkqwj9o} and let
$z_1,\dots,z_m\in Z$ and $x_1,\dots,x_n\in B$
be such that $Z$ is algebraic over $A[ z_1, \dots, z_m ]$ and 
$ \Gr B = Z [ \gr(x_1), \dots, \gr(x_n) ] $;
then $B = Z[x_1, \dots, x_n]$ and
$$
\deg(D)= \max \{
\delta_D(z_1), \dots, \delta_D(z_m),  \delta_D(x_1), \dots, \delta_D(x_n) \},
\quad \text{for all $D \in \Der_A(B)$.}
$$
\end{proposition}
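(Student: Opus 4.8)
The plan is to derive the whole statement from Lemmas~\ref{dfiqpw9e3r90ad}, \ref{difjasdkjfa;klsd} and \ref{d9f8q9weij8932}; no genuinely new argument is needed beyond assembling these and running one well-ordering induction.

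First I would observe that, by parts~(3) and~(2) of Lemma~\ref{dfiqpw9e3r90ad}, hypothesis~(1) forces $\deg(x)\ge 0$ for every $x\in B\setminus\{0\}$, so $Z=\setspec{x\in B}{\deg(x)\le 0}$ is a $0$-subring of $B$ and $\Gr B$ is a $Z$-algebra; thus hypothesis~(2) is meaningful, and I may fix $x_1,\dots,x_n\in B$ with $\Gr B=Z[\gr(x_1),\dots,\gr(x_n)]$. Applying Lemma~\ref{d9f8q9weij8932} to $(B,G,\deg)$, $Z$ and these $x_i$ produces a set $E$ with $Z\cup\{x_1,\dots,x_n\}\subseteq E\subseteq Z[x_1,\dots,x_n]$ enjoying properties~(a) and~(b) of that lemma, and part~(2) of the same lemma (which applies because of hypothesis~(1)) gives $B=Z[x_1,\dots,x_n]$. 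This establishes assertion~(3), and also the part of the ``more precisely'' claim asserting $B=Z[x_1,\dots,x_n]$, since that equality depends only on the $x_i$.

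Next, for assertion~(4) and the displayed formula, fix a subring $A$ of $Z$ with $\trdeg_A(Z)<\infty$ and choose $z_1,\dots,z_m\in Z$ with $Z$ algebraic over $A[z_1,\dots,z_m]$ (possible since $\trdeg_A(Z)<\infty$); note $A$ is a $0$-subring of $B$, being contained in $Z$. Let $D\in\Der_A(B)$, write $\delta=\delta_D$, and put $M=\max\{\delta(z_1),\dots,\delta(z_m),\delta(x_1),\dots,\delta(x_n)\}$. Since $M$ is the largest of finitely many values of $\delta$ on $B$, it is attained by $\delta$, so it suffices to prove $\delta(b)\le M$ for all $b\in B\setminus\{0\}$; then $M$ is the greatest element of $\setspec{\delta(b)}{b\in B\setminus\{0\}}$, so $\deg(D)=\delta(B)=M$ and in particular $\deg$ is tame over $A$. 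Now Lemma~\ref{difjasdkjfa;klsd}, applied to the $0$-subrings $A\subseteq Z$ and the set $S=\{z_1,\dots,z_m\}$, shows that $\delta(Z)$ is defined and equals $\max_i\delta(z_i)$. Because $\delta(Z)$ is defined, property~(b) of $E$ applies and yields $\delta(E)=\delta(Z\cup\{x_1,\dots,x_n\})=\max\{\delta(Z),\delta(x_1),\dots,\delta(x_n)\}=M$; in particular $\delta(E)$ is defined and $\delta(e)\le M$ for every $e\in E$.

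Finally I would prove $\delta(b)\le M$ for all $b\in B\setminus\{0\}$ by the minimal-degree argument used for Lemma~\ref{d9f8q9weij8932}(2). If the claim failed, hypothesis~(1) would provide $b\in B\setminus\{0\}$ with $\delta(b)>M$ and with $\deg(b)$ least among such $b$, and property~(a) of $E$ would give $e\in E$ with $\deg(b-e)<\deg(b)$. If $b=e$ then $b\in E$ forces $\delta(b)\le M$, a contradiction; otherwise $e\ne 0$ and $\deg(e)=\deg(b)$ (since $\deg(b-e)<\deg(b)$), while minimality of $\deg(b)$ gives $\delta(b-e)\le M$. From $Db=De+D(b-e)$ together with $\deg(De)=\delta(e)+\deg(e)\le M+\deg(b)$ and $\deg(D(b-e))=\delta(b-e)+\deg(b-e)\le M+\deg(b)$ (using the convention $-\infty+g=-\infty$), we obtain $\deg(Db)\le M+\deg(b)$, hence $\delta(b)\le M$, contradicting $\delta(b)>M$. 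The only points requiring a little care are the bookkeeping in the degenerate case $M=-\infty$ (where the argument simply shows $D=0$) and the legitimacy of choosing $b$ of least degree; the real substance of the proposition is carried by Lemma~\ref{d9f8q9weij8932}, whose proof in turn rests on the subpair machinery of \ref{sdp091231u23ikjd}--\ref{sdkjfpqiowepakj999}.
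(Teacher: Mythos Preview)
Your proof is correct and follows essentially the same approach as the paper: both invoke Lemma~\ref{dfiqpw9e3r90ad} to make sense of the hypotheses, use Lemma~\ref{d9f8q9weij8932} to obtain $B=Z[x_1,\dots,x_n]$ and the set $E$, apply Lemma~\ref{difjasdkjfa;klsd} to see that $\delta_D(Z)$ is defined, and then run a minimal-degree contradiction argument. Your final step is in fact slightly cleaner than the paper's: where the paper splits into cases according to whether $\deg(Dx)=\deg(De)$ (obtaining the strict inequality $\delta_D(x)<M$ in one case), you simply bound $\deg(De)$ and $\deg(D(b-e))$ separately by $M+\deg(b)$ and conclude $\delta(b)\le M$ directly, which already suffices for the contradiction.
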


\begin{proof}
In view of \ref{dfiqpw9e3r90ad},
assumption~(1) implies that $Z$ is a $0$-subring of $B$ and hence
that $\Gr B$ is a $Z$-algebra, so assumption~(2) makes sense.
Let $z_1,\dots,z_m\in Z$ and $x_1,\dots,x_n\in B$ be
such that $Z$ is algebraic over $A[ z_1, \dots, z_m ]$ and 
$ \Gr B = Z [ \gr(x_1), \dots, \gr(x_n) ] $;
then $B = Z[x_1, \dots, x_n]$ by \ref{d9f8q9weij8932}.
Let $D \in \Der_A(B)$.
To prove the Proposition, we have to show that 
$\delta_D(x) \le M$ for all $x \in B$, where we define
$$
M = \max \{
\delta_D(z_1), \dots, \delta_D(z_m),  \delta_D(x_1), \dots, \delta_D(x_n) \} .
$$
Choose a subset $E \subseteq Z[ x_1,\dots,x_n ]$ satisfying the
requirements of \ref{d9f8q9weij8932}.
By \ref{difjasdkjfa;klsd}, $\delta_D(Z)$ is defined and is equal to
$\max_{1 \le i \le m} \delta_D( z_i )$;
so $E$ satisfies:
\begin{equation*}
\forall_{x\in B\setminus\{0\}}\,\exists_{e\in E}\, \deg(x-e)<\deg x 
\quad \text{and} \quad
\delta_D(E) = \delta_D( Z \cup \{ x_1, \dots, x_m \} ) = M .
\end{equation*}
% and $\delta_D(E) = \delta_D( Z \cup \{ x_1, \dots, x_m \} ) = M$.
By contradiction, assume that some $x \in B$ satisfies $\delta_D(x) > M$;
then the set
$
S_0 = \setspec{ i \in G }
{ \exists_{x \in B}\, \big( \deg(x)=i \text{\ and\ } \delta_D(x)>M \big) }
$
is not empty.
By assumption~(1), we may consider the least element $i_0$ of $S_0$.
Now pick $x \in B$ such that $\deg(x) = i_0$ and $\delta_D(x)>M$;
note in particular that  $\delta_D(x)>M$ and $\delta_D(E) = M$ imply
that $x \notin E$.
Choose $e \in E$ such that $\deg(x-e)<\deg(x)$
and note that $x-e \neq 0$; so $\deg(x-e)$ is an element of $G$ strictly
less than $i_0$.
By minimality of $i_0$, it follows that $\delta_D( x - e ) \le M$.

Note that $\deg(x) = \deg(e)$. 
If $\deg(Dx) = \deg (De)$, it follows immediately that 
$\delta_D(x) = \delta_D(e) \le M$, a contradiction;
so $\deg(Dx) \neq \deg (De)$ and consequently
$\deg( Dx - De ) = \max( \deg(Dx), \deg(De) )$. Then
\begin{multline*}
\delta_D(x) + \deg x = \deg( Dx ) \le\max( \deg(Dx), \deg(De) ) \\
= \deg D(x-e) = \delta_D(x-e) + \deg(x-e) \le M + \deg(x-e),
\end{multline*}
so $\delta_D(x) \le  M  +  \deg(x-e) - \deg x  <  M$, a contradiction.
\end{proof}

\begin{corollary}  \label{dofiq-290348rakj}
Let $B$ be an integral domain of finite transcendence degree over a field
$\bk$ of characteristic zero.
Suppose that $\deg: B \to G \cup \{ -\infty \}$
(where $G$ is a totally ordered abelian group)
is a degree function satisfying the conditions
\begin{enumerate}

\item $\setspec{ \deg(x) }{ x \in B \setminus \{0\} }$ is a well-ordered
subset of $G$,

\item $\Gr B$ is a finitely generated algebra over
the ring $Z = \setspec{ x \in B }{ \deg(x) \le 0 }$.

\end{enumerate}
Then $\deg$ is tame over $\bk$ and $B$ is finitely generated as a $Z$-algebra.
More precisely,
if $z_1,\dots,z_m\in Z$ and $x_1,\dots,x_n\in B$ are
such that $Z$ is algebraic over $\bk[ z_1, \dots, z_m ]$ and 
$ \Gr B = Z [ \gr(x_1), \dots, \gr(x_n) ] $,
then $B = Z[x_1, \dots, x_n]$ and
$$
\deg(D)= \max \{
\delta_D(z_1), \dots, \delta_D(z_m),  \delta_D(x_1), \dots, \delta_D(x_n) \},
\quad \text{for all $D \in \Der_\bk(B)$.}
$$
\end{corollary}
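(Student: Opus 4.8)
The plan is to obtain \ref{dofiq-290348rakj} as the special case $A = \bk$ of Proposition~\ref{NEW_dofiq-290348rakj}; the whole argument thus reduces to checking that $\bk$ is a legitimate choice for the subring denoted ``$A$'' in part~\eqref{dfkjq2i3uqkqwj9o} of \ref{NEW_dofiq-290348rakj}, i.e.\ that $\bk$ is a subring of $Z = \setspec{x \in B}{\deg(x) \le 0}$ with $\trdeg_\bk(Z) < \infty$.

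First I would use hypothesis~(1) to pin down $Z$. By part~(3) of \ref{dfiqpw9e3r90ad}, the well-ordering of $\setspec{\deg(x)}{x \in B \setminus \{0\}}$ forces $\deg(x) \ge 0$ for every nonzero $x \in B$, and then \ref{dfiqpw9e3r90ad}\eqref{dkfjq234729qijk} shows that $Z$ is a $0$-subring of $B$ (so in particular a subring). Next I would check $\bk \subseteq Z$: for $\lambda \in \bk^*$ we have $\deg(\lambda) \ge 0$ and $\deg(\lambda^{-1}) \ge 0$, while $\deg(\lambda) + \deg(\lambda^{-1}) = \deg(1) = 0$, whence $\deg(\lambda) = 0$; together with $\deg(0) = -\infty \le 0$ this gives $\bk \subseteq Z$. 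Finally, $\trdeg_\bk(Z) \le \trdeg_\bk(B) < \infty$ by the standing hypothesis on $B$.

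With these observations in hand, hypotheses~(1) and~(2) of \ref{NEW_dofiq-290348rakj} are literally hypotheses~(1) and~(2) of the Corollary, and $A = \bk$ meets the requirement of \ref{NEW_dofiq-290348rakj}\eqref{dfkjq2i3uqkqwj9o}. Conclusion~(3) of \ref{NEW_dofiq-290348rakj} then gives that $B$ is finitely generated as a $Z$-algebra, conclusion~\eqref{dfkjq2i3uqkqwj9o} gives that $\deg$ is tame over $\bk$, and the ``more precisely'' clause of \ref{NEW_dofiq-290348rakj}, specialised to $A = \bk$, yields both $B = Z[x_1,\dots,x_n]$ and the displayed formula for $\deg(D)$ (for all $D \in \Der_\bk(B)$), for any admissible choice of $z_1,\dots,z_m \in Z$ and $x_1,\dots,x_n \in B$.

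I do not anticipate any real obstacle: the proof is bookkeeping, and the only step requiring an explicit (one-line) argument is that every unit of $B$ --- in particular every element of $\bk^*$ --- has degree $0$, which is immediate once $\deg(x) \ge 0$ is known for all nonzero $x$. All the substance has already been absorbed into \ref{d9f8q9weij8932} and \ref{NEW_dofiq-290348rakj}.
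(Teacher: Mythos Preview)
Your proposal is correct and follows essentially the same route as the paper: reduce to Proposition~\ref{NEW_dofiq-290348rakj} with $A=\bk$, after checking via \ref{dfiqpw9e3r90ad} that $Z$ is a $0$-subring containing $\bk$. The only cosmetic difference is that the paper justifies $\bk\subseteq Z$ by invoking the factorial closedness of $Z$ in $B$ (from \ref{dfiqpw9e3r90ad}\eqref{dkfjq234729qijk}), whereas you argue directly that units have degree~$0$; both are one-line observations.
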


\begin{proof}
In view of \ref{dfiqpw9e3r90ad},
assumption~(1) implies that $Z$ is a $0$-subring of $B$ and hence
that $\Gr B$ is a $Z$-algebra, so assumption~(2) makes sense.
It is also noted in \ref{dfiqpw9e3r90ad} that $Z$ is factorially closed
in $B$; this implies that $\bk \subseteq Z$, so all hypotheses of
\ref{NEW_dofiq-290348rakj} are satisfied with $A=\bk$.
The result follows from \ref{NEW_dofiq-290348rakj}.
\end{proof}

\begin{corollary}  \label{;sdlkuf1238whd8787sd676}
Let $B$ be an integral domain containing a field $\bk$ of characteristic zero.
Suppose that $\deg: B \to G \cup \{ -\infty \}$
(where $G$ is a totally ordered abelian group)
is a degree function satisfying the conditions
\begin{enumerate}

\item $\setspec{ \deg(x) }{ x \in B \setminus \{0\} }$ is a well-ordered
subset of $G$,

\item $\Gr B$ is a finitely generated $\bk$-algebra.

\end{enumerate}
Then $\deg$ is tame over $\bk$ and $B$ is a finitely generated $\bk$-algebra.
More precisely, if $x_1,\dots,x_n\in B$ are such that 
$ \Gr B = \bk[ \gr(x_1), \dots, \gr(x_n) ] $,
then:
\begin{enumerate}
\addtocounter{enumi}{2}

\item $B = \bk[x_1, \dots, x_n]$

\item $\deg(D)= \max \{ \delta_D(x_1), \dots, \delta_D(x_n) \}$
for all $D \in \Der_\bk(B)$

\item $Z = \bk[z_1, \dots, z_m]$, where we define
$Z = \setspec{ x \in B }{ \deg(x) \le 0 }$
and where $z_1, \dots, z_m$ denote the elements of $\{ x_1, \dots, x_n \}$
of degree $0$.

\end{enumerate}
\end{corollary}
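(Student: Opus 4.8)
\textit{Proof proposal.}
The plan is to reduce everything to Proposition~\ref{NEW_dofiq-290348rakj} applied with $A=\bk$; essentially all the content lies in identifying the subring $Z = \setspec{x\in B}{\deg(x)\le 0}$ and checking that $\trdeg_\bk(Z)<\infty$.

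First I would record that, by hypothesis~(1) and Lemma~\ref{dfiqpw9e3r90ad}(3), $\deg(x)\ge 0$ for all $x\in B\setminus\{0\}$; so by Lemma~\ref{dfiqpw9e3r90ad}(2) the ring $Z$ is a $0$-subring of $B$, is factorially closed in $B$, and $\Gr B$ is a $Z$-algebra. In particular $\bk\subseteq Z$, since $1\in Z$ and $\lambda\cdot\lambda^{-1}=1\in Z\setminus\{0\}$ forces $\lambda\in Z$ for every $\lambda\in\bk^*$. Now let $x_1,\dots,x_n\in B$ be as in the statement, i.e. $\Gr B = \bk[\gr(x_1),\dots,\gr(x_n)]$ (such elements exist because $\Gr B$ is a finitely generated $G$-graded $\bk$-algebra, hence generated over $\bk$ by finitely many homogeneous elements, and every homogeneous element of $\Gr B$ is $\gr(x)$ for some $x\in B$), and put $d_i=\deg(x_i)$; recall each $\gr(x_i)$ is homogeneous of degree $d_i$, and $d_i\ge 0$. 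Since a monomial $\prod_i\gr(x_i)^{a_i}$ is homogeneous of degree $\sum_i a_id_i$ and all $d_i\ge 0$, such a monomial has degree $0$ only if $a_i=0$ whenever $d_i\neq 0$; hence $B_{[0]} = \bk[\,\gr(x_i) : d_i=0\,]$. Because $\deg\ge 0$ on $B\setminus\{0\}$ we have $B_{0^-}=\{0\}$, so the ring homomorphism $Z=B_0\to B_{[0]}$, $z\mapsto\gr(z)$, of Lemma~\ref{dfiqpw9e3r90ad}(1) is an isomorphism; it carries $\bk$ to $\bk$ and each $z_j$ to $\gr(z_j)$, where $z_1,\dots,z_m$ are the $x_i$ with $d_i=0$, hence it identifies the subring $\bk[z_1,\dots,z_m]\subseteq Z$ with $\bk[\gr(z_1),\dots,\gr(z_m)]=B_{[0]}$. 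Therefore $Z=\bk[z_1,\dots,z_m]$, which is the asserted description of $Z$; in particular $Z$ is $\bk$-affine and $\trdeg_\bk(Z)<\infty$.

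Now I would apply Proposition~\ref{NEW_dofiq-290348rakj} with $A=\bk$. Its hypothesis~(1) is our hypothesis~(1); and since $\bk\subseteq Z$ we have $\Gr B = \bk[\gr(x_1),\dots,\gr(x_n)] = Z[\gr(x_1),\dots,\gr(x_n)]$, which is its hypothesis~(2). Using the data $z_1,\dots,z_m\in Z$ and $x_1,\dots,x_n\in B$ found above (and noting that $Z=\bk[z_1,\dots,z_m]$ is algebraic over $\bk[z_1,\dots,z_m]$, so the "more precisely" clause applies), the Proposition yields: $\deg$ is tame over $\bk$; $B$ is finitely generated as a $Z$-algebra; $B=Z[x_1,\dots,x_n]$; and $\deg(D)=\max\{\delta_D(z_1),\dots,\delta_D(z_m),\delta_D(x_1),\dots,\delta_D(x_n)\}$ for all $D\in\Der_\bk(B)$. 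Since each $z_j$ occurs among $x_1,\dots,x_n$, the terms $\delta_D(z_j)$ are redundant in this maximum, giving $\deg(D)=\max\{\delta_D(x_1),\dots,\delta_D(x_n)\}$; and $B=Z[x_1,\dots,x_n]=\bk[z_1,\dots,z_m][x_1,\dots,x_n]=\bk[x_1,\dots,x_n]$, which gives the remaining assertion and in particular shows $B$ is a finitely generated $\bk$-algebra. The one step that needs care is the identification $Z\cong B_{[0]}$ via $\gr$ and the ensuing computation $Z=\bk[z_1,\dots,z_m]$ in the middle paragraph; once this is in hand, everything else is immediate from Proposition~\ref{NEW_dofiq-290348rakj}.
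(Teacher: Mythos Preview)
Your proof is correct and follows essentially the same approach as the paper: both identify $Z$ with $B_{[0]}$ via the isomorphism $z\mapsto\gr(z)$ (using that $B_{0^-}=\{0\}$), compute $B_{[0]}=\bk[\gr(z_1),\dots,\gr(z_m)]$ from the nonnegativity of the $d_i$, and then feed everything into Proposition~\ref{NEW_dofiq-290348rakj} with $A=\bk$ (the paper routes this through Corollary~\ref{dofiq-290348rakj}, which is just that Proposition with $A=\bk$). The only cosmetic difference is that the paper obtains $B=\bk[x_1,\dots,x_n]$ directly from Lemma~\ref{d9f8q9weij8932} applied with the $0$-subring $\bk$, whereas you recover it as $B=Z[x_1,\dots,x_n]=\bk[z_1,\dots,z_m][x_1,\dots,x_n]$.
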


\begin{proof}
By \ref{dfiqpw9e3r90ad}, assumption~(1) implies that
$Z = \setspec{ x \in B }{ \deg(x) \le 0 }$ is a $0$-subring of $B$
(so $\Gr B$ is a $Z$-algebra) and is factorially closed in $B$.
The last condition implies that $\bk \subseteq Z$,
so $\Gr B$ is a $\bk$-algebra and assumption~(2) makes sense.
% It also follows that $\Gr B$ is finitely generated as a $Z$-algebra.

Let $x_1,\dots,x_n\in B \setminus \{0\}$ be such that
$\Gr B = \bk[ \gr(x_1), \dots, \gr(x_n) ]$.
As $\bk$ is a $0$-subring of $B$ (because $\bk \subseteq Z$),
\ref{d9f8q9weij8932} implies that $B = \bk[x_1, \dots, x_n]$.
% In particular $\trdeg_\bk(B)<\infty$, 
% so all hypotheses of \ref{dofiq-290348rakj} are satisfied.

Write $\Gr B = \bigoplus_{i \in G} B_{[i]}$
with notation as in \ref{dfjapisdjfa;kj}.
Let $\mu : Z \to B_{[0]}$ be the map
$Z = B_0 \to B_0/B_{0^-} = B_{[0]}$, and note
that $\mu$ is an isomorphism of $\bk$-algebras and 
$\mu(z) = \gr(z)$ for all $z \in Z$.
Let $z_1, \dots, z_m$ be the elements of $\{ x_1, \dots, x_n \}$
of degree $0$;
as $\Gr B = \bk[ \gr(x_1), \dots, \gr(x_n) ]$ where 
$\deg( \gr x_i ) = \deg(x_i) \ge 0$ for each $i$, it follows that 
$B_{[0]} = \bk[ \gr(z_1), \dots, \gr(z_m) ]$.
So the composite
$\bk[ z_1, \dots, z_m ] \hookrightarrow Z \xrightarrow{\mu} B_{[0]}$
is surjective, and consequently $Z = \bk[ z_1, \dots, z_m ]$.
All hypotheses of \ref{dofiq-290348rakj} are satisfied, so
$$
\deg(D) = \max
\{\delta_D(z_1), \dots, \delta_D(z_m),  \delta_D(x_1), \dots, \delta_D(x_n) \}
= \max \{ \delta_D(x_1), \dots, \delta_D(x_n) \} 
$$
for every $D \in \Der_\bk(B)$.
\end{proof}

%  \begin{subparag} \label{dfuqpwejdkl}
%  Recall that a \textit{valuation\/} of a ring $B$
%  is a set map $v : B \to G \cup \{ \infty \}$,
%  where $G$ is a totally ordered abelian group, satisfying for all $x,y \in B$:
%  (i)~$v(x)=\infty \Leftrightarrow x=0$, (ii)~$v(xy) = v(x)+v(y)$
%  and (iii)~$v(x+y) \ge \min( v(x), v(y) )$.
%  It is clear that if $v : B \to G \cup \{ \infty \}$ is a valuation then the
%  map $B \to G \cup \{ -\infty\}$, $x \mapsto -v(x)$, is a degree function;
%  and that if $\deg : B \to G \cup \{ -\infty\}$ is a degree function,
%  then $B \to G \cup \{ \infty \}$, $x \mapsto -\deg(x)$, is a valuation.
%  \end{subparag}

\begin{proposition}  \label{d9320495823erfrj}
Let $B$ an integral domain of finite transcendence degree over
a field $\bk$ of characteristic zero
and $\deg : B \to G \cup \{ -\infty\}$ a degree function,
where $G$ is a totally ordered abelian group.
Assume:
\begin{enumerata}

\item $\setspec{ \deg(x) }{ x \in B \setminus \{0\} }$ is a well-ordered
subset of $G$

\item  % $\trdeg_\bk (B)<\infty$ and
$\Frac(B)/\Frac(Z)$ is a one-dimensional function field,
where $Z$ denotes the subring $\setspec{ x \in B }{ \deg(x) \le 0 }$ of $B$.

\end{enumerata}
Then $\deg$ is tame over $\bk$.
Moreover, the ordered monoid
$\setspec{ \deg(x) }{ x \in B \setminus \{0\} }$
can be embedded in $(\Nat, +, \le)$.
\rien{ %%%%%%%%%%%%%%%%%%%%%%%%%%%%%%%%%%%%%%%%%%%%%%%%%%%%%%%
\begin{enumerata}
\addtocounter{enumi}{2}

\item The ordered monoid
$\setspec{ \deg(x) }{ x \in B \setminus \{0\} }$
can be embedded in $(\Nat, +, \le)$.

\item There exist finite subsets 
$\{ z_1, \dots, z_m \}$ of $Z$ and $\{ x_1, \dots, x_n \}$ of $B$
satisfying:
\begin{enumerate}

\item[(i)] $Z$ is algebraic over $\bk[ z_1, \dots, z_m ]$

\item[(ii)] for each $b \in B$,
there exists $z \in Z \setminus \{0\}$ such that $zb \in Z[ x_1, \dots, x_n ]$.
% and $\xi \in Z[ x_1, \dots, x_n ]$ such that $\deg(zb-\xi)< \deg(b)$.
\end{enumerate}

\item For any choice of 
$\{ z_1, \dots, z_m \}$ and $\{ x_1, \dots, x_n \}$ 
% satisfying {\rm (i)} and {\rm (ii)},
as in {\rm (d)},
$$
\deg(D)= \max \{
\delta_D(z_1), \dots, \delta_D(z_m),  \delta_D(x_1), \dots, \delta_D(x_n) \}
\text{ for all $D \in \Der_\bk(B)$.}
$$
\end{enumerata}
}  %%% end of rien %%%%%%%%%%%%%%%%%%%%%%%%%%%%%%%%%%%%%%%%%%%%%%%%
\end{proposition}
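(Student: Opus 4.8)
The plan is to show that the hypotheses force $v:=-\deg$ to be (the restriction of) a \emph{discrete} valuation of the one-dimensional function field $\Frac(B)/\Frac(Z)$; this rigidity then lets us reduce, after one localization, to the graded situation already settled in~\ref{dofiq-290348rakj}.

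First I would set up the valuation. By~\ref{dfqp29w3eoawjkk}, $\deg$ extends uniquely to a degree function on $F:=\Frac(B)$, and then $v:=-\deg\colon F^{*}\to G$ is a Krull valuation of $F$ (axioms (2),(3) of a degree function, read with the opposite sign, are the valuation axioms). By~\ref{dfiqpw9e3r90ad}, hypothesis~(a) gives $\deg(x)\ge 0$ for all $x\in B\setminus\{0\}$, so the valuation ring $\Oeul_{v}=\setspec{f\in F}{v(f)\ge 0}$ meets $B$ in $Z$, and $v$ is trivial on $K:=\Frac(Z)$. Also $v$ is nontrivial, since otherwise $Z=B$, hence $\Frac(Z)=F$, contradicting $\trdeg_{K}(F)=1$. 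Since $F/K$ is a one-dimensional function field, $v$ is then discrete of rank one with finite residue extension, by a standard argument: picking $t\in F$ with $v(t)\neq 0$ — necessarily transcendental over $K$, so that $F/K(t)$ is finite — the restriction $v|_{K(t)}$ is one of the discrete valuations of $K(t)/K$ (value group $\Integ$, residue field finite over $K$), and the fundamental inequality $ef\le[F:K(t)]$ forces $[\Gamma_{v}:\Integ]<\infty$ and $[\kappa(v):K]<\infty$, where $\Gamma_{v}$ and $\kappa(v)$ denote the value group and residue field of $v$. Thus $\Gamma_{v}\isom\Integ$ as ordered groups. In particular $\setspec{\deg(x)}{x\in B\setminus\{0\}}=-v(B\setminus\{0\})$ is a submonoid of $\Gamma_{v}\isom\Integ$ lying in the non-negative cone, which already proves the last assertion of the Proposition. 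From now on I identify $\Gamma_{v}$ with $\Integ$, so $\deg$ has values in $\Nat\cup\{-\infty\}$; this does not affect whether $\deg(D)$ is defined, since that depends only on the totally ordered set $\setspec{\deg(Dx)-\deg(x)}{x\in B\setminus\{0\}}\subseteq\Gamma_{v}$.

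Next I would localize so that the $0$-subring becomes a field. Put $S=Z\setminus\{0\}$ and $B'=S^{-1}B$, and let $\DEG\colon B'\to\Nat\cup\{-\infty\}$ be the extension of $\deg$ given by~\ref{dfqp29w3eoawjkk}; since $\DEG(x/z)=\deg(x)$, the function $\DEG$ still has non-negative values and $Z':=\setspec{x\in B'}{\DEG(x)\le 0}$ equals $K$. Moreover $\Frac(B')=F$, so $\Frac(B')/\Frac(Z')$ is again the one-dimensional function field $F/K$, and $\trdeg_{\bk}(B')=\trdeg_{\bk}(F)=\trdeg_{\bk}(B)<\infty$. By~\ref{difuq90weukj} (applied with $\deg$ the restriction of $\DEG$) it suffices to prove that $\DEG$ is tame over $\bk$: then every $D\in\Der_{\bk}(B)$ has $\DEG(S^{-1}D)$ defined, hence $\deg(D)$ defined.

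The last step is to prove that $\Gr(B')$ is finitely generated over $K$, and then invoke~\ref{dofiq-290348rakj}. Because $\Gamma_{v}=\Integ$ is discrete, the $\DEG$-filtration of $B'$ is induced by the valuation filtration: writing $\Oeul_{\ge m}=\setspec{f\in F}{v(f)\ge m}$, one has $B'_{n}=B'\cap\Oeul_{\ge -n}$ and $B'_{n^{-}}=B'\cap\Oeul_{\ge -n+1}$, so the canonical homomorphism $\Gr(B')\to\operatorname{gr}_{v}(F):=\bigoplus_{m\in\Integ}\Oeul_{\ge m}/\Oeul_{\ge m+1}$ is injective. For a discrete valuation $\operatorname{gr}_{v}(F)\isom\kappa(v)[\varpi,\varpi^{-1}]$ with $\varpi$ homogeneous of $v$-value $-1$; since $\DEG\ge 0$, the image of $\Gr(B')$ consists of elements of $v$-value $\le 0$, hence lies in $\kappa(v)[\varpi]=\kappa(v)^{[1]}$, while $\Gr(B')_{0}=B'_{[0]}=K$ is carried by the residue map isomorphically onto a subfield of $\kappa(v)$. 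So $\Gr(B')$ is a $K$-subalgebra of $\kappa(v)^{[1]}$ with $[\kappa(v):K]<\infty$, and any such subalgebra $R$ is finitely generated over $K$: the $\kappa(v)$-subalgebra $\kappa(v)[R]$ of $\kappa(v)^{[1]}$ is finitely generated over $\kappa(v)$ — a $\kappa(v)$-subalgebra of $\kappa(v)^{[1]}$ either equals $\kappa(v)$ or is finite over $\kappa(v)[f]$ for any nonconstant $f$ it contains, being a submodule of the finite free $\kappa(v)[f]$-module $\kappa(v)^{[1]}$ — and, since $\kappa(v)[R]$ is generated over $\kappa(v)$ by $R$ and is finitely generated, already $\kappa(v)[R]=\kappa(v)[f_{1},\dots,f_{s}]$ for suitable $f_{1},\dots,f_{s}\in R$; then $K[f_{1},\dots,f_{s}]\subseteq R\subseteq\kappa(v)[R]$, and $\kappa(v)[R]$ is a finite module over the Noetherian ring $K[f_{1},\dots,f_{s}]$, whence $R$ is too. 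Therefore $\Gr(B')$ is finitely generated over $Z'=K$; the remaining hypotheses of~\ref{dofiq-290348rakj} hold ($\trdeg_{\bk}(B')<\infty$, and $\setspec{\DEG(x)}{x\in B'\setminus\{0\}}\subseteq\Nat$ is well-ordered), and it yields that $\DEG$ is tame over $\bk$, completing the proof. I expect the main obstacle to be exactly this last paragraph — correctly identifying $\Gr(B')$ with a graded subring of $\kappa(v)^{[1]}$ and the ensuing finite-generation lemma — together with a careful invocation of the classical discreteness of valuations of one-dimensional function fields; note that the localization is precisely what renders $\kappa(v)/K$ finite and thereby makes the finite-generation step work.
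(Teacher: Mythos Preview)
Your proof is correct and follows essentially the same route as the paper: extend $\deg$ to a valuation on $\Frac(B)$, use the classical fact that nontrivial valuations of a one-dimensional function field are discrete with finite residue extension, localize at $S=Z\setminus\{0\}$ so that the $0$-subring becomes the field $K=\Frac(Z)$, embed $\Gr(B')$ into the graded ring of the valuation to get finite generation over $K$, apply \ref{dofiq-290348rakj}, and pull the result back via \ref{difuq90weukj}. The only cosmetic differences are that the paper embeds $\Gr(B')$ into $\kappa[t,t^{-1}]$ rather than $\kappa^{[1]}$ and dispatches the finite-generation step in one line (invoking that $\kappa[t,t^{-1}]$ has transcendence degree~$1$ over $Z'$), whereas you spell that step out explicitly; your extra detail there is welcome and correct.
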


\begin{proof}
Let $S = Z \setminus \{0\}$, $B' = S^{-1}B$ and $Z' = S^{-1}Z = \Frac(Z)$.
By \ref{dfqp29w3eoawjkk}, $\deg$ extends to a 
degree function $\deg' : B' \to G \cup \{ -\infty\}$.
Note that
\begin{gather}
\label{di81923jlaksdnf}
\trdeg_\bk (B')<\infty, \\[1mm]
\label{aaapeiu;qakm}
\begin{minipage}[t]{.9\textwidth}
$\setspec{ \deg'(x) }{ x \in B' \setminus \{0\} }$
is equal to $\setspec{ \deg(x) }{ x \in B \setminus \{0\} }$
and hence is a well-ordered subset of $G$,
\end{minipage} \\[1mm]
\label{89374oijdf}
Z' = \setspec{ x \in B' }{ \deg'(x) \le 0 }.
\end{gather}

Let $L = \Frac(B)$ and, using \ref{dfqp29w3eoawjkk} again,
let $\DEG : L \to G \cup \{-\infty\}$
be the unique degree function which extends $\deg$ and $\deg'$.
Let $v : L \to  G \cup \{ \infty\}$ be the valuation of $L$
defined by $v(x) = -\DEG(x)$.
As $\deg(x)=0$ for all $x \in Z \setminus \{0\}$, we note that 
$v$ is a valuation over $Z'$; as $L/Z'$ is a one-dimensional function field,
it follows that $v$ is a rank $1$ discrete valuation; so the
residue field $\kappa$ of $v$ is a finite extension of $Z'$
and $\setspec{ v(x) }{ x \in L^* } \isom \Integ$.
It follows that
$\setspec{ \deg(x) }{ x \in B \setminus \{0\} }$
can be embedded in $(\Nat, +, \le)$.

\rien{ %%%%%%%%%%%%%%%%%%%%%%%%%%%%%%%%%%%%%%%%%%%%%
Write $\Gr(B) = \bigoplus_{i \in G} B_{[i]}$,
$\Gr(B') = \bigoplus_{i \in G} B'_{[i]}$
and $\Gr(L) = \bigoplus_{i \in G} L_{[i]}$ for the associated graded
rings determined by $(B,\deg)$, $(B',\deg')$ and $(L,\DEG)$ respectively;
then there are injective degree-preserving homomorphisms of graded rings,
$\Gr(B) \hookrightarrow \Gr(B') \hookrightarrow \Gr(L)$.
% and in particular $Z' = B'_{[0]} \subseteq L_{[0]} = \kappa$.
}  %%% end of \rien %%%%%%%%%%%%%%%%%%%%%%%%%%%%%%%%%%%%%%%%%%%%%
Consider the associated graded rings 
$\Gr(B)$, $\Gr(B') = \bigoplus_{i \in G} B'_{[i]}$ and $\Gr(L)$
determined by $(B,\deg)$,  $(B',\deg')$ and $(L,\DEG)$ respectively,
and note that $Z' = B'_{[0]} \subseteq \Gr(B')$.
As $\DEG$ extends $\deg'$ and $\deg'$ extends $\deg$,
there are injective degree-preserving homomorphisms of graded rings,
$\Gr(B) \hookrightarrow \Gr(B') \hookrightarrow \Gr(L)$.
Using that $v$ is a rank $1$ discrete valuation, 
we get $\Gr(L) \isom \kappa[ t, t^{-1} ]$ where $t$ is an indeterminate over
$\kappa$. Thus 
$$
Z' \subseteq \Gr(B') \subseteq \kappa[ t, t^{-1} ].
$$
Now $[\kappa : Z']<\infty$, so $\kappa[ t, t^{-1} ]$ is a finitely generated
$Z'$-algebra of transcendence degree $1$ over $Z'$;
it follows that
\begin{equation} \label{9r18239ria}
\text{$\Gr(B')$ is finitely generated as a $Z'$-algebra.}
\end{equation}
Let $D \in \Der_\bk(B)$, and consider $S^{-1}D \in \Der_\bk( B' )$.
By  \eqref{di81923jlaksdnf},  \eqref{aaapeiu;qakm}, \eqref{89374oijdf}
and \eqref{9r18239ria}, $(B', G, \deg')$ and $Z'$ satisfy the hypothesis of
\ref{dofiq-290348rakj} and consequently $\deg'(S^{-1}D)$ is defined;
by \ref{difuq90weukj}, $\deg(D)$ is defined.
So $\deg$ is tame over $\bk$.
\end{proof}

\begin{Remark}
Let us indicate how to compute the value of $\deg(D)$, in the above proof.
We have $\bk \subseteq Z$, because (\ref{dfiqpw9e3r90ad})
$Z$ is factorially closed in $B$; so we may choose $z_1, \dots, z_m \in Z$
such that $Z$ is algebraic over $\bk[z_1, \dots, z_m]$.
Also note that $\Gr(B) \hookrightarrow \Gr(B')$ is the localization:
$\Gr(B') = S^{-1}\Gr(B)$; this and \eqref{9r18239ria}
imply that we can choose $x_1, \dots, x_n  \in B$ satisfying
$\Gr(B') = Z' [ \gr(x_1), \dots, \gr(x_n) ]$.
As $Z'$ is algebraic over $\bk[ z_1, \dots, z_m ]$
and $\Gr(B') = Z' [ \gr(x_1), \dots, \gr(x_n) ]$, 
\ref{dofiq-290348rakj} gives
\begin{multline*}
\deg'( S^{-1}D )
= \max\{ \delta_{ S^{-1}D }( z_1 ), \dots, \delta_{ S^{-1}D }( z_m ), 
\delta_{ S^{-1}D }( x_1 ), \dots, \delta_{ S^{-1}D }( x_n ) \} \\
= \max\{ \delta_{ D }( z_1 ), \dots, \delta_{ D }( z_m ), 
\delta_{ D }( x_1 ), \dots, \delta_{ D }( x_n ) \} .
\end{multline*}
Now \ref{difuq90weukj} implies that $\deg(D) = \deg'( S^{-1}D )$,
so we conclude that
$$
\deg( D ) = \max\{ \delta_{ D }( z_1 ), \dots, \delta_{ D }( z_m ), 
\delta_{ D }( x_1 ), \dots, \delta_{ D }( x_n ) \} .
$$
\end{Remark}

\providecommand{\bysame}{\leavevmode\hbox to3em{\hrulefill}\thinspace}
\providecommand{\MR}{\relax\ifhmode\unskip\space\fi MR }
% \MRhref is called by the amsart/book/proc definition of \MR.
\providecommand{\MRhref}[2]{%
  \href{http://www.ams.org/mathscinet-getitem?mr=#1}{#2}
}
\providecommand{\href}[2]{#2}

% \bibliographystyle{amsplain}
% \bibliography{/home/ddaigle/articles/bib/dbase}
\end{document}